\newtheorem{theorem}{Theorem}[section]
\newtheorem{lemma}[theorem]{Lemma}
\newtheorem{thm}[theorem]{Theorem}
\newtheorem{prop}[theorem]{Proposition}
\newtheorem{rem}[theorem]{Remark}
\newtheorem{coro}[theorem]{Corollary}
\newtheorem{defn}[theorem]{Definition}
\newtheorem{example}[theorem]{Example}
\newtheorem{con/que}[theorem]{Conjecture/Question}
\newcommand{\ra}{\rightarrow}
\newcommand{\mo}{\mathcal{O}}
\newcommand{\mf}{\mathcal{F}}
\newcommand{\mg}{\mathcal{G}}
\newcommand{\ma}{\mathcal{A}}
\newcommand{\mb}{\mathcal{B}}
\newcommand{\me}{\mathcal{E}}
\newcommand{\mext}{\mathbb{E}\mathtt{x}\mathtt{t}}
\newcommand{\mhom}{\mathbb{H}\mathsf{o}\mathsf{m}}
\newcommand{\ms}{\mathcal{S}}
\newcommand{\mt}{\mathcal{T}}
\newcommand{\mh}{\mathcal{H}}
\newcommand{\mm}{\mathcal{M}}
\newcommand{\wmn}{\widetilde{\mathcal{N}}}
\newcommand{\mn}{\mathcal{N}}
\newcommand{\mw}{\mathcal{W}}
\newcommand{\wmv}{\widetilde{\mathcal{V}}}
\newcommand{\mv}{\mathcal{V}}
\newcommand{\um}{\mathcal{U}}
\newcommand{\ts}{\mathbf{S}}
\newcommand{\mr}{\mathcal{R}}
\newcommand{\E}{\mathscr{E}}
\newcommand{\Hom}{\operatorname{Hom}}
\newcommand{\Ext}{\operatorname{Ext}}
\def\<{\langle}
\def\>{\rangle}
\newcommand{\ls}{|L|}
\newcommand{\p}{\mathbb{P}}
\newcommand{\bz}{\mathbb{Z}}
\newcommand{\bq}{\mathbb{Q}}
\newcommand{\bc}{\mathbb{C}}
\newcommand{\bl}{\mathbb{L}}
\newcommand{\bu}{\mathbb{U}}
\newcommand{\bh}{\mathbb{H}}
\begin{document}
\fontsize{12pt}{14pt} \textwidth=14cm \textheight=21 cm
\numberwithin{equation}{section}
\title{Some Betti numbers of the moduli of 1-dimensional sheaves on $\mathbb{P}^2$.}
\author{Yao Yuan}
\subjclass[2010]{Primary 14D22, 14J26}
\thanks{The author is supported by NSFC 21022107.  }

\begin{abstract}Let $M(d,\chi)$ with $(d,\chi)=1$ be the moduli space of semistable sheaves on $\mathbb{P}^2$ supported on curves of degree $d$ and with Euler characteristic $\chi$.    The cohomology ring $H^*(M(d,\chi),\mathbb{Z})$ of $M(d,\chi)$ is isomorphic to its Chow ring $A^*(M(d,\chi))$ by Markman's result.  W. Pi and J. Shen have described a minimal generating set of $A^*(M(d,\chi))$ consisting of $3d-7$ generators, which they also showed to have no relation in $A^{\geq d-2}(M(d,\chi))$.  We compute the two Betti numbers $b_{2(d-1)}$ and $b_{2d}$ of $M(d,\chi)$ and as a corollary we show that the generators given by Pi-Shen have no relations in $A^{\geq d-1}(M(d,\chi))$ but do have three linearly independent relations in $A^d(M(d,\chi))$.

~~~

\textbf{Keywords:} Moduli spaces of semistable 1-dimensional sheaves on projective surfaces,  motivic measures,  Betti numbers, generators of the Chow rings.
\end{abstract}

\maketitle
\tableofcontents
\section{Introduction.}

\subsection{Motiviations.}We work over the complex numbers $\bc$.

The moduli space $M(L,\chi)^{ss}$ parametrizes 1-dimensional (Gieseker) semistable sheaves of rank 0, determinant $L$ and Euler characteristic $\chi$ over a projective surface $S$.  All (S-equivalence classes of) sheaves in $M(L,\chi)^{ss}$ are supported on curves in the linear system $\ls$.  Let $M(L,\chi)\subset M(L,\chi)^{ss}$ parametrize stable sheaves in $M(L,\chi)^{ss}$.

$M(L,\chi)^{ss}$ is closely related to the moduli space of Higgs bundles if $S$ is a ruled surface, and to PT-theory on a local Calabi-Yau 3-fold if $S$ is Fano.   Therefore it is a very interesting problem to compute topological invariants of $M(L,\chi)^{ss}$ such as the Euler number and the Betti numbers.  

Although there are general formulas for the (virtual) Poincar\'e polynomials of moduli spaces of Higgs bundles (see \cite{HRV}, \cite{Mel}, \cite{MoSc} and \cite{Sch}), we don't have any general formula for the Poincar\'e polynomial or the Euler number of $M(L,\chi)^{ss}$.   Only for special cases, all Betti numbers of $M(L,\chi)^{ss}$ are known, such as \cite{Yuan3}.  Results in \cite{Bou1}, \cite{Bou2}, \cite{MS} and \cite{Yuan9} help us understand better the enumerative geometry of $M(L,\chi)^{ss}$, such as $\chi$-independence of the intersection cohomology groups for $S$ a del Pezzo surface.  But still we don't know how to compute the dimensions of those intersection cohomology groups.  

Let $S=\p^2$ and $L=dH$ with $H$ the hyperplane class.  If $(d,\chi)=1$, then $M(d,\chi)^{ss}=M(L,\chi)$ and by \cite[Theorem 1]{mark} the cohomology ring $H^*(M(d,\chi),\bz)$ is torsion-free and isomorphic to the Chow ring $A^*(M(d,\chi))$.  In \cite{PS}, W. Pi and J. Shen studied the Chow ring $A^*(M(d,\chi))$ and found a minimal generating set consisting of $3d-7$ generators.  They also showed that those generators don't have relations in $A^{\leq d-2}(M(d,\chi))$.  

In this paper we study the freeness of those generators in degrees $d-1$ and $d$ by computing the Betti numbers $b_{2(d-1)}, b_{2d}$ of $M(d,\chi)$.  Our result shows that the relation of the least degree among those generators appears in $A^d(M(d,\chi))$.  Therefore it should not be easy to get all the Betti numbers of $M(d,\chi)$ by studying $A^{*}(M(d,\chi))$.  By \cite[Theorem 0.1]{MS}, we have isomorphisms of graded vector spaces $H^*(M(d,\chi),\bq)\cong H^*(M(d,\chi'),\bq)$ for any $\chi,\chi'$ coprime to $d$.  This opens the question whether the ring structure of  $H^*(M(d,\chi),\bz)$ is also $\chi$-independent as long as $\chi$ is coprime to $d$.  Hence to study the Chow ring is itself an interesting problem.

\subsection{Results \& Applications.}
Let  $d\geq 5$, and $\chi=-d-1$. 
Let $(\p^2)^{[m]}$ be the Hilbert scheme of $m$ points on $\p^2$.  Our main result is the following theorem.
\begin{thm}[Theorem \ref{main}]\label{intro1}For $d\geq 5$ and $\chi$ coprime to $d$, we have
\[b_{2k}(M(d,\chi))=\left\{\begin{array}{ll}b_{2k}((\p^2)^{[n]}),&k\leq d-2\\ b_{2k}((\p^2)^{[n]})-3,&k=d-1\\b_{2k}((\p^2)^{[n]})-12,&k= d\end{array}\right.,\]
where $n=\frac{d(d-1)}2+1.$
\end{thm}

Notice that we have already obtained $b_{2k}(M(d,\chi)),k\leq d-2$ in \cite{Yuan9} (see \S\ref{generator}).  

By \cite[Theorem 0.1]{MS}, the Betti numbers of $M(d,\chi)$ are $\chi$-independent  for any $\chi$ coprime to $d$.  Hence Theorem \ref{intro1} holds for any $\chi$ coprime to $d$ if it holds for $\chi=-d-1$.  We get the following theorem as a corollary to Theorem \ref{intro1} which proves Conjecture 3.3 in \cite{PS}.

\begin{thm}[Corollary \ref{mainco}]\label{intro2}For $d\geq 5$ and $\chi$ coprime to $d$, the $3d-7$ generators 
$$c_0(2),c_2(0),c_k(0),c_{k-1}(1),c_{k-2}(2), k\in {3,\cdots,d-1}$$ of $A^*(M(d,\chi))\cong H^*(M(d,\chi),\mathbb{Z})$ given in \cite{PS} have no relation in $A^i(M(d,\chi)),i\leq d-1$ and have 3 linearly independent relations in $A^d(M(d,\chi))$. 
\end{thm}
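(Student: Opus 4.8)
The plan is to derive Theorem \ref{intro2} (Corollary \ref{mainco}) directly from Theorem \ref{intro1} (Theorem \ref{main}) together with the structural results of Pi--Shen recalled in the introduction. The key input from \cite{PS} is that the $3d-7$ classes $c_0(2),c_2(0),c_k(0),c_{k-1}(1),c_{k-2}(2)$ for $k\in\{3,\dots,d-1\}$ form a minimal generating set of $A^*(M(d,\chi))$ and have no relations in degrees $\leq d-2$; in particular, for each $i\leq d-2$ the dimension of $A^i(M(d,\chi))$ as a $\bq$-vector space equals the number $P(i)$ of monomials of (weighted) degree $i$ in these generators, i.e. the coefficient of $q^i$ in the Hilbert series of the free graded-commutative algebra on generators of the prescribed degrees. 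Since $H^*(M(d,\chi),\bz)$ is torsion-free and vanishes in odd degrees (both from \cite{mark}), we have $\dim A^i(M(d,\chi)) = b_{2i}(M(d,\chi))$.

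The argument then proceeds degree by degree. First, in degree $d-1$: the surjection from the free algebra onto $A^{d-1}(M(d,\chi))$ shows $b_{2(d-1)}(M(d,\chi))\leq P(d-1)$, with equality iff there are no relations in that degree. I would compute $P(d-1)$ by observing that passing from degree $d-2$ to degree $d-1$ in the free algebra, the generating function count gives $P(d-1)$ explicitly; comparing with Theorem \ref{intro1}, which asserts $b_{2(d-1)}(M(d,\chi))=b_{2(d-1)}((\p^2)^{[\frac{d(d-1)}2+1]})-3$, the point is precisely that $b_{2(d-1)}((\p^2)^{[\frac{d(d-1)}2+1]})-3 = P(d-1)$. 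This identity reduces to a purely combinatorial comparison between the Betti numbers of the Hilbert scheme of points (given by Göttsche's formula) and the monomial count $P$, which in turn follows from the analysis of \cite{Yuan9} recalled in \S\ref{generator}: there it is shown that $b_{2k}(M(d,\chi)) = P(k)$ for $k\leq d-2$ and that $b_{2k}((\p^2)^{[\frac{d(d-1)}2+1]})$ differs from $P(k)$ in a controlled way for $k$ near $d$. Granting this, no relations exist in degree $d-1$, proving the first assertion of the corollary.

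For degree $d$: the same surjection gives $b_{2d}(M(d,\chi))\leq P(d)$, and the dimension of the space of degree-$d$ relations among the generators equals $P(d)-b_{2d}(M(d,\chi))$, since no lower-degree relations propagate (there being none in degrees $\leq d-1$ by the previous step). By Theorem \ref{intro1}, $b_{2d}(M(d,\chi)) = b_{2d}((\p^2)^{[\frac{d(d-1)}2+1]})-12$. Using again the comparison from \S\ref{generator} between the Hilbert scheme Betti numbers and $P$ — where one expects $b_{2d}((\p^2)^{[\frac{d(d-1)}2+1]})$ to exceed $P(d)$ by a specific amount (here $9$, so that $P(d)-b_{2d}(M(d,\chi)) = P(d) - (b_{2d}((\p^2)^{[N]})-12) = 12-9 = 3$) — one concludes that the space of relations in $A^d(M(d,\chi))$ is exactly $3$-dimensional, i.e. there are $3$ linearly independent relations. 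Finally, these are relations in the strict sense (not consequences of lower ones) because $A^{\leq d-1}$ is the free algebra in those degrees.

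The main obstacle is the combinatorial bookkeeping identifying $P(k)$ — the Hilbert-series coefficient of the free graded algebra on the $3d-7$ Pi--Shen generators — with the shifted Betti numbers of $(\p^2)^{[\frac{d(d-1)}2+1]}$; concretely, establishing that the defect is $0$ for $k\leq d-2$, exactly $3$ at $k=d-1$, and exactly $9$ at $k=d$ (so the relation defects come out as $0$, $0$, $3$). This is where one must invoke, and carefully match up, the explicit formulas of \cite{Yuan9} for $b_{2k}(M(d,\chi))$, $k\leq d-2$, with Göttsche's generating function for the Hilbert scheme; the rest of the corollary is then a formal consequence of Theorem \ref{intro1} and the torsion-freeness and evenness of $H^*(M(d,\chi),\bz)$. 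Note that the hypothesis $d\geq 5$ is used to ensure $d-2\geq 3$, so that the Pi--Shen freeness range is nonempty and the degree-$(d-1)$ and degree-$d$ steps are genuinely separated from the generator degrees.
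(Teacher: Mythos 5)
Your proposal is correct and follows essentially the same route as the paper: combine Theorem \ref{intro1} with the torsion-freeness and evenness of $H^*(M(d,\chi),\mathbb{Z})$ from Markman, and with the combinatorial comparison (carried out in \S\ref{generator}, equation (\ref{twob})) between the monomial count of the Pi--Shen generating series $G(z)$ and G\"ottsche's formula for the Hilbert scheme, giving defects $0$, $3$, $9$ at degrees $\leq d-2$, $d-1$, $d$ and hence $0$, $0$, $3$ relations. The bookkeeping you flag as the main obstacle is exactly the content of (\ref{gs1})--(\ref{twob}) in the paper, so no new idea is missing.
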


Notice that to get Theorem \ref{intro2}, it is sufficient and necessary to know $b_{2(d-1)}(M(d,\chi))$ and $b_{2d}(M(d,\chi))$ (see \S \ref{generator}).

Theorem \ref{intro1} is proved by studying the class of $[M(d,\chi)]$ in the Grothendieck ring $\widehat{K}(Var_{\bc})$ of stacks over $\bc$ (see \S\ref{Gring}). 
Let $\widehat{K}_m$ be the subgroup (with the group structure given by the addition) of $\widehat{K}(Var_{\bc})$ consisting of stacks with dimension $\leq m$. For $[\ma],[\mb]\in\widehat{K}(Var_{\bc})$, we write $[\ma]\equiv[\mb]~~(\widehat{K}_m)$ if $[\ma]-[\mb]\in \widehat{K}_m$.

We have the following theorem.
\begin{thm}[Theorem \ref{main}]\label{intro3}Let $d\geq 5$ and $\chi=-d-1$.  In $\widehat{K}(Var_{\bc})$ we have
\[\bl^{d-1}[(\p^2)^{[\frac{d(d-1)}2+1]}]-[M(d,\chi)]\equiv3[\p^{2d-4}][\p^2][(\p^2)^{[\frac{(d-1)(d-2)}2+1]}]~~(\widehat{K}_{d^2-d}),\]
 where $\bl:=[\mathbb{A}^1]$ is the class of an affine line in $\widehat{K}(Var_{\bc})$.
\end{thm}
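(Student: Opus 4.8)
The plan is to compute the class $[M(d,\chi)]$ in $\widehat{K}(Var_{\bc})$ up to the ideal $\widehat{K}_{d^2-d}$ by using a suitable stratification or wall-crossing description of the moduli space, and to compare it term by term with the class of the Hilbert scheme $(\p^2)^{[\frac{d(d-1)}2+1]}$, which has the same dimension $d^2-d+2$. Since we only need the answer modulo $\widehat{K}_{d^2-d}$, i.e. modulo classes of codimension $\geq 1$, we only need to control the top-dimensional stratum and the codimension-one strata of each side; everything deeper can be discarded. Concretely, I would first recall from \cite{Yuan9} (cited in \S\ref{generator}) the identity controlling $b_{2k}$ for $k\le d-2$, which should already give $\bl^{d-1}[(\p^2)^{[\frac{d(d-1)}2+1]}]\equiv[M(d,\chi)]$ modulo $\widehat{K}_{d^2-d-2}$ (codimension $\ge 2$); the whole content of the theorem is then to pin down the single codimension-one correction term, which must be of the form $c\cdot[\p^{2d-4}][\p^2][(\p^2)^{[\frac{(d-1)(d-2)}2+1]}]$ for some integer $c$, and to show $c=3$.

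The key steps, in order, would be: (1) fix $\chi=-d-1$ and use a parametrization of sheaves $F\in M(d,\chi)$ via their support curve $C\in|dH|=\p^{\frac{d(d-1)}2}$ and a torsion-free rank-one sheaf on $C$ of the appropriate degree; equivalently use the Le Potier / O'Grady-type description relating $M(d,\chi)$ to a relative compactified Jacobian over the universal curve, and its known comparison with $(\p^2)^{[\frac{d(d-1)}2+1]}$ coming from the fact that a rank-one torsion-free sheaf on a planar curve is an ideal sheaf twisted by a line bundle. (2) Identify the locus where this comparison map fails to be an isomorphism — this is exactly the locus of non-reduced or reducible curves, or more precisely where the sheaf is not a line bundle on its support — and stratify it. (3) Isolate the unique codimension-one piece: I expect it to be (closely related to) the locus of sheaves supported on a curve $C = C' \cup \ell$ that is the union of a line $\ell$ and a degree $(d-1)$ curve $C'$, with the restriction to $C'$ behaving like an ideal sheaf of $\frac{(d-1)(d-2)}2+1$ points — this explains the factors $[\p^2]$ (choice of line $\ell$, a $\p^2$ worth, or the point of attachment), $[(\p^2)^{[\frac{(d-1)(d-2)}2+1]}]$, and $[\p^{2d-4}]$ (a projective space of the expected dimension $2d-4$ coming from the extra gluing/extension data along $C'\cap\ell$ or from the choice of degree on the line component). (4) Compute the multiplicity $3$: this should come from a local analysis — either three distinct strata of the same dimension mapping in, or a single stratum whose generic fiber or transversality contributes a factor that, after taking classes in $\widehat{K}$, yields the coefficient $3$. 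A natural source of the "$3$" is the three ways the Euler-characteristic splitting $\chi=\chi'+\chi''$ across the two components can fail stability, or the three choices of how the one extra point/twist distributes.

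The dimension bookkeeping is the routine part: one checks $\dim \p^{2d-4}+\dim\p^2+\dim(\p^2)^{[\frac{(d-1)(d-2)}2+1]} = (2d-4)+2+((d-1)(d-2)+2) = d^2-d$, which matches $\widehat{K}_{d^2-d}$, and $\dim M(d,\chi)=d^2+1$ while $\dim\bl^{d-1}[(\p^2)^{[\frac{d(d-1)}2+1]}]=(d-1)+(d^2-d+2)=d^2+1$, so both sides of the congruence live in dimension $d^2+1$ and differ by something of dimension exactly $d^2-d$, consistent with a codimension-one correction after factoring out $\bl^{d-1}$ versus the naive count. The main obstacle, and where the real work lies, is step (3)–(4): precisely identifying which codimension-one stratum survives modulo $\widehat{K}_{d^2-d}$ and proving its class is exactly $3[\p^{2d-4}][\p^2][(\p^2)^{[\frac{(d-1)(d-2)}2+1]}]$ rather than a more complicated sum of products of such building blocks. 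This requires a careful local model for $M(d,\chi)$ near the non-locally-free locus — likely via deformation theory of the sheaf $F$ and an analysis of the $\Ext^1$ controlling how $F$ deforms off a reducible support — together with a clean way to see that all other boundary strata (reducible with both parts of degree $\ge 2$, non-reduced support, etc.) contribute only to $\widehat{K}_{d^2-d-1}$ or lower. I would organize this as a chain of lemmas, each removing one type of lower-dimensional locus, reducing to the single "$\ell + C'$" stratum, and finish with the explicit computation of its motivic class, using the $\chi$-independence result of \cite{MS} at the end to transfer from $\chi=-d-1$ to arbitrary coprime $\chi$.
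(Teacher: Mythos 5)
Your plan does not go through as stated, and the central problem is your dimension bookkeeping, which drives the whole ``codimension-one'' framing. The correction term has
$\dim\bigl(\p^{2d-4}\times\p^2\times(\p^2)^{[\frac{(d-1)(d-2)}2+1]}\bigr)=(2d-4)+2+\bigl((d-1)(d-2)+2\bigr)=d^2-d+2$,
not $d^2-d$; if it really had dimension $d^2-d$ it would vanish modulo $\widehat{K}_{d^2-d}$ and the theorem would reduce to $\bl^{d-1}[(\p^2)^{[n]}]\equiv[M(d,\chi)]$, which is false at this depth. Since the classes being compared have dimension $d^2+1$, the correction sits in codimension $d-1$, and working modulo $\widehat{K}_{d^2-d}$ means you must control \emph{every} stratum of dimension $>d^2-d$, i.e.\ down to codimension about $d$ --- not just the top and codimension-one pieces. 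This is exactly why the paper needs the full battery of dimension estimates (Propositions \ref{dnki2} and \ref{dnkip}, Lemma \ref{stlr}) rather than a two-stratum argument. A second gap is logical: quoting the Betti-number equalities of \cite{Yuan9} for $k\le d-2$ cannot ``already give'' a congruence in $\widehat{K}(Var_{\bc})$; Betti numbers are only the image of the class under a motivic measure, so the implication runs the other way. The paper instead establishes the $\widehat{K}$-level congruence itself (and cites the $\widehat{K}$-level statements of \cite{Yuan4}, not merely Betti numbers).

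Your proposed mechanism for comparing $M(d,\chi)$ with $(\p^2)^{[n]}$ (relative compactified Jacobian, ``rank-one torsion-free sheaf on a planar curve is a twisted ideal sheaf'') is also not the one that works: that picture involves ideal sheaves of points on the support curve, and gives no direct map to the Hilbert scheme of points on $\p^2$. The paper's correspondence is via non-split extensions $0\ra K_S\ra I_n(L+K_S)\ra\mf\ra0$ with $n=g_L-1-\chi$ (possible since $\chi<0$ forces $h^1(\mf)\neq0$), packaged as a rational map $\Psi$ between the stack of such extensions over $\mm^a_{\bullet}(d,\chi)$ and the stack of nonzero sections of $I_n(d)$ over the Hilbert-scheme stack; all correction terms are bookkeeping for where $\Psi$ degenerates (sheaves with $h^0(\mf)\neq0$ or $h^1(\mf(-K_S))\neq0$, extensions whose middle term has torsion, ideal sheaves with $h^1(I_n(d))\neq0$), controlled by Lemma \ref{tfcr} and Propositions \ref{msbu}, \ref{stlo}, \ref{mshn01}. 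What you do get qualitatively right is the geometric source of the coefficient: the surviving strata are sheaves supported on a line plus an integral degree-$(d-1)$ curve, indexed by the Euler-characteristic splitting ($\mr(1,d-1;-1)$, $\mr(d-1,1;\chi+2)$, $\mr(d-1,1;\chi+3)$, $\mr(d-1,1;\chi+4)$), and Proposition \ref{clor} converts each into $\bl^{d-1}[N(1,\cdot)][N(d-1,\cdot)]/(\bl-1)^2$, whence the $3[\p^{2d-4}][\p^2][(\p^2)^{[n_1]}]$. But identifying these multiplicities, and showing that nothing else survives above dimension $d^2-d$, is precisely the content you have not supplied, and your outline as written (only codimension $\le 1$ controlled, Betti numbers used in the wrong direction, no extension correspondence) would not produce it.
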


\subsection{Notations \& Conventions.}
\begin{itemize}
\item
Let $S$ be a smooth projective surface over $\bc$ with $H$ an ample line bundle and $K_S$ the canonical line bundle. 

\item We use the same letters for line bundles and the corresponding divisor classes.  For instance, $L_1\otimes L_2$ is the tensor of two line bundles, and $L_1.L_2$ is the intersection number of their divisor classes.  Define $L_1^2=L_1.L_1$.

\item Let $M(L,\chi)^{ss}$ be the moduli space parametrizing 1-dimensional (Gieseker) semistable sheaves of rank 0, determinant $L$ and Euler characteristic $\chi$ over the projective surface $S$.  Let $M(L,\chi)\subset M(L,\chi)^{ss}$ parametrize stable sheaves in $M(L,\chi)^{ss}$.

\item We write $M(d,\chi)^{ss}$ ($M(d,\chi)$, resp.) instead of $M(dH,\chi)^{ss}$
($M(dH,\chi)$, resp.) if $S=\p^2$ and $H$ is the hyperplane class.

\item For a sheaf $\mf$, $\chi(\mf)$ is its Euler characteristic and $h^i(\mf):=\dim H^i(\mf)$.


\item For a nontrivial effective line bundle $L$ on $S$, we have some notations as follows. \begin{itemize}
\item
Let $\ls$ be the linear system of the corresponding divisor class.  
\item Let $g_L:=\frac{L.(L+K_S)}2+1$ be the arithmetic genus of curves in $\ls$.  
\item Denote by $\ls^{int}$ the open subset of $\ls$ consisting of all integral curves.  
\item Let $\rho_L:=\dim\ls-\dim(\ls\setminus\ls^{int})$, i.e. $\rho_L$ is the codimension of the subset inside $\ls$ consisting of all non-integral curves.  \end{itemize}

\end{itemize}

\subsection{Plan of the paper.} Section 2 of the paper provides preliminaries. In \S 2.1 we give some basic facts on 1-dimensional sheaves over surfaces and their moduli.  In \S 2.2 we review the result in \cite{PS} for the generators of the cohomology ring of $M(d,\chi)$.  In \S 2.3 we give a brief introduction to the motivic measures of algebraic stacks.  In Section 3 we define some stacks (\S 3.1) and prove some technic results in dimension estimate (\S 3.2), and finally study the sheaves supported on curves with two integral components (\S 3.3).  In Section 4 we prove our main theorem, where there are also some technic lemmas and propositions.  

\subsection{Acknowledgements.} I would like to thank Weite Pi and Junliang Shen for their paper \cite{PS} which motivates me for this work.  
I thank the referees for all the valuable comments.

\section{Preliminaries.}

\subsection{Moduli spaces of 1-dimensional sheaves on a projective surface.} Let $S$ be a smooth projective surface over $\bc$ with $H$ an ample line bundle and $K_S$ the canonical line bundle.  Let $L$ be an effective non-trivial line bundle on $S$.  

Let $\mf$ be a sheaf of rank 0 and determinant $L$, then $\mf$ is a \textbf{1-dimensional sheaf} on $S$.  
If $\mf$ is pure, i.e. it does not contain non-trivial 0-dimensional subsheaves, then $\mf$ has a locally free resolution of length one (see e.g. \cite[Lemma 4.5.13]{Yuan1}) as follows
\begin{equation}\label{lfr}0\ra\ma\xrightarrow{\alpha}\mb\ra\mf\ra0,\end{equation}
with $\ma,\mb$ locally free sheaves.  

Locally the map $\alpha$ in (\ref{lfr}) is given by a square matrix, hence we can define the determinant $\det(\alpha)$ which is a section of $\det(\ma)^{-1}\otimes\det(\mb)\cong \det(\mf)\cong L$.  It is easy to see that $\det(\alpha)$ vanishes at a point $s\in S$ iff the stalk $\mf_s\neq 0$.  The section $\det(\alpha)$ of $L$ defines a unique curve in the linear system $\ls$, which is called the \textbf{schematic support} or \textbf{Fitting support} of $\mf$, and is denoted by supp$(\mf)$.   

The 1-dimensional scheme supp$(\mf)$ sometimes is not a variety but a scheme.  For instance, if $\mf$ is a rank $r\geq2$ sheaf on an integral curve $C_0\subset S$, then supp$(\mf)=rC_0$ is a non-reduced curve.  We will call supp$(\mf)$ the support of $\mf$ for short if there is no confusion.  

\begin{example}\label{supp}For any curve $C$ in $\ls$, one can a find pure 1-dimensional sheaf $\mf$ with \emph{supp}$(\mf)=C$.  Let $C=n_1 C_1 \cup \cdots\cup n_s C_s$ with $C_1,\cdots,C_s$ pairwise distinct integral curves.  Let $\mf_i$ be a rank $n_i$ bundle over $C_i$, $i=1,\cdots,s$, then $\mf=\bigoplus_{i=1}^s\mf_i$ has $C$ as its support. 
\end{example}
\begin{rem}If $\mf$ is not purely 1-dimensional, denote by $\mt$ its maximal 0-dimensional subsheaf and let $C=$ \emph{supp}$(\mf/\mt)$.  However $\mf$ is not necessarily an $\mo_C$-module since $\mt_x$ may not be zero for some point $x$ outside $C$.  According to the convention in \cite{Yuan9}, we say $\mf$ has $C$ as its support if $\mf$ is a $\mo_C$-module and \emph{supp}$(\mf/\mt)=C$.
\end{rem}

For 1-dimensional sheaves, the Gieseker semistability coincides with the slope semistability.   For a 1-dimensional sheaf $\mf$, its slope is $\mu(\mf):=\frac{\chi(\mf)}{\det(\mf).H}$.  $\mf$ is (semi)stable if for every $0\neq \mf_1\subsetneq \mf$, 
$\mu(\mf_1)<(\leq)\mu(\mf)$. It is easy to see that semistability implies purity.

Let $\chi$ be an integer.  Let $M(L,\chi)^{ss}$ be the moduli space of semistable 1-dimensional sheaves with determinant $L$ and Euler characteristic $\chi$.  Then we have the Hilbert-Chow morphism
\begin{equation}\label{hcm}\pi^{ss}:M(L,\chi)^{ss}\ra\ls,~\mf\mapsto \text{supp}\mf.\end{equation}
The map $\pi^{ss}$ in (\ref{hcm}) is not only a set-theoretic map but also a morphism of algebraic schemes (see e.g. \cite[Proposition 3.0.2]{Yuan1}).  The fiber of $\pi^{ss}$ over an integral curve $C$ is isomorphic to the (compactified) Jacobian of $C$ while the fibers over non-integral curves can have more than one irreducible component.  Let $M(L,\chi)\subset M(L,\chi)^{ss}$ be the open subscheme parametrizing stable sheaves.  Denote by \begin{equation}\label{hcms}\pi:M(L,\chi)\ra\ls\end{equation} the Hilbert-Chow morphism restricted to $M(L,\chi)$.  We have the following result.
\begin{prop}[Corollary 1.3 in \cite{Yuan9}]\label{dimest}If $S$ is Fano or with $K_S$ trivial, and if moreover $\ls$ contains an integral curve, then all the fibers of the Hilbert-Chow morphism $\pi$ in (\ref{hcms}) have dimension $g_L$, where $g_{L}$ is the arithmetic genus of any curve in $\ls$. 
\end{prop}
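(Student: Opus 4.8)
The plan is to establish the two inequalities $\dim\pi^{-1}(C)\le g_L$ and $\dim\pi^{-1}(C)\ge g_L$ separately, the former being the real content. Over the dense open $\ls^{int}$ the fibre $\pi^{-1}(C)$ is, up to a fixed twist, the moduli of rank-one torsion-free sheaves of the given Euler characteristic on the integral projective curve $C$; on an integral curve these are automatically stable, so $\pi^{-1}(C)$ is the compactified Jacobian $\overline{\operatorname{Jac}}(C)$, which is integral of dimension $p_a(C)=g_L$. Thus $\pi$ is equidimensional of relative dimension $g_L$ over $\ls^{int}$, and the problem reduces to bounding $\dim\pi^{-1}(C)$ for $C$ non-integral.

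For the lower bound I would use the ambient geometry. Serre duality on $S$ gives $\Ext^2(\mf,\mf)\cong\Hom(\mf,\mf\otimes K_S)^\vee$; this vanishes when $S$ is Fano, because $\mu(\mf\otimes K_S)=\mu(\mf)+(L\cdot K_S)/(L\cdot H)<\mu(\mf)$ and a nonzero map between semistable sheaves decreasing the slope is impossible, and it is one-dimensional on the stable locus when $K_S\cong\mo_S$. Deformation theory then makes $M(L,\chi)^{ss}$ smooth (along the stable locus) of pure dimension $1-\chi(\mf,\mf)+\dim\Ext^2(\mf,\mf)=L^2+1$, resp. $L^2+2$; by Riemann--Roch and the hypothesis that $\ls$ contains an integral curve this equals $\dim\ls+g_L$, which is where the precise constant is pinned down (using $\chi(\mo_S)=1$, resp. $2$). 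Since $\pi$ is surjective and every relevant component of $M(L,\chi)^{ss}$ has dimension $\dim\ls+g_L$, a component meeting $\pi^{-1}(C)$ either dominates $\ls$ and contributes at least $\dim\ls+g_L-\dim\ls=g_L$ to the fibre, or has lower-dimensional image and contributes more; in all cases $\dim\pi^{-1}(C)\ge g_L$.

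For the upper bound, let $C=n_1C_1+\cdots+n_sC_s$ be non-integral, and argue by induction on the complexity of the support, say on $(s,\sum n_i)$. Fix a splitting $C=C'+C''$ into nonzero effective subcurves (for instance peeling off $C'=C_1$). For $\mf\in\pi^{-1}(C)$ let $\mf''\subseteq\mf$ be the largest subsheaf with Fitting support inside $C''$ and set $\mf'=\mf/\mf''$, giving $0\to\mf''\to\mf\to\mf'\to0$ with $\mf''$ an $\mo_{C''}$-module and $\mf'$ supported on a subcurve of $C'$; semistability confines the numerical invariants of $(\mf',\mf'')$ to a finite set, along which we stratify $\pi^{-1}(C)$. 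On each stratum, $\mf\mapsto(\mf',\mf'')$ realises it as fibred over locally closed subsets of the Hilbert--Chow fibres at $C'$ and $C''$, with fibre $\mathbb{P}\Ext^1(\mf',\mf'')$ modulo the automorphisms of $\mf'$ and $\mf''$. Since $-\chi(\mf',\mf'')=C'\cdot C''$ and $\Ext^2(\mf',\mf'')\cong\Hom(\mf'',\mf'\otimes K_S)^\vee$ is killed (by the $K_S$-hypothesis together with a slope estimate, or, when $C'$ and $C''$ share no component, by the dimensions of the supports), a bookkeeping of automorphisms yields on each stratum
\[\dim(\text{stratum})\ \le\ p_a(C')+p_a(C'')+C'\cdot C''-1\ =\ p_a(C)\ =\ g_L,\]
where the two inductive bounds $\dim(\text{fibre at }C')\le p_a(C')$, $\dim(\text{fibre at }C'')\le p_a(C'')$ and the genus addition formula $p_a(C'+C'')=p_a(C')+p_a(C'')+C'\cdot C''-1$ have been used. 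Maximising over the finitely many strata gives $\dim\pi^{-1}(C)\le g_L$, and with the lower bound we conclude $\dim\pi^{-1}(C)=g_L$.

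The hard part is to make the stratification estimate rigorous and uniform. The delicate points are: the subsheaf of $\mf$ carried by a subcurve need not be pure, so one must work with the $\mo_C$-module conventions recalled after Example \ref{supp} and absorb the zero-dimensional corrections; the additive constant must come out exactly $-1$ (the contributions of one projectivisation and of the global scalars $\bc\subset\Hom(\mf,\mf)$ have to be balanced against the $\Hom$/automorphism cancellation, without double counting or interference from strictly semistable loci); and the recursion does not reduce to a smaller complete linear system on $S$, because $|n_1C_1|$ need not contain an integral curve when $C_1$ is rigid, so the required fibre-dimension bounds for sheaves on reducible and, above all, non-reduced curves $mC_0$ must be produced within the same induction — this is exactly the stage at which the Fano or $K_S$-trivial hypothesis is indispensable, as it forces the obstruction spaces $\Ext^2$ to vanish (or be harmless) and prevents the stratum-by-stratum count from exceeding $p_a(C)$.
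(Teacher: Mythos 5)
The paper itself does not prove this proposition: it is imported verbatim as Corollary 1.3 of \cite{Yuan9}, so there is no internal proof to compare with. Judged on its own merits, your outline has the right general shape (compactified Jacobians over $\ls^{int}$, a deformation-theoretic lower bound, an upper bound by stratifying the fibres over non-integral curves and counting extensions), but it leaves genuine gaps exactly where all the difficulty of the cited result lies, and your own closing paragraph concedes that these are unresolved rather than resolving them.

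Concretely: (i) the reduction device you choose, the largest subsheaf $\mf''\subset\mf$ with Fitting support inside $C''$, only behaves as you describe when $C'$ and $C''$ have no common component; for non-reduced supports, say $C=mC_0$ with $\mf$ a rank-$m$ bundle on the integral curve $C_0$, there is no canonical maximal subsheaf with support $(m-1)C_0$ producing pieces of the expected supports, and one is forced to work instead with, e.g., the filtration of $\mf$ by powers of the ideal of the reduced support, whose graded pieces are sheaves on reduced curves of varying rank and Euler characteristic; handling that case is the heart of \cite{Yuan9} and is precisely what your induction, as set up, cannot reach. (ii) The pieces $\mf',\mf''$ are in general not semistable, so your inductive hypothesis---a statement about fibres of $\pi$ on the moduli of \emph{semistable} sheaves---does not apply to them; what is actually needed is a dimension bound for families of pure, possibly unstable sheaves with bounded sub-Euler characteristics (the $\mm^a_{\bullet}$-type estimate of Theorem 1.5 in \cite{Yuan9}, quoted as (\ref{dim3}) in this paper), and since that is essentially the content of the result being proved, the induction is not self-contained. (iii) The slope argument killing $\Ext^2(\mf',\mf'')\cong\Hom(\mf'',\mf'\otimes K_S)^\vee$ requires semistability and fails when $C'$ and $C''$ share a component, so $\dim\Ext^1(\mf',\mf'')$ exceeds $C'.C''$ by $\dim\Hom(\mf',\mf'')+\dim\Ext^2(\mf',\mf'')$, and these excess terms have to be cancelled against endomorphisms and the non-uniqueness of the filtration---exactly the bookkeeping you defer. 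Finally, the lower bound should also address fibres that might consist only of strictly semistable points, where the estimate $\dim_{[\mf]}M\geq 1-\chi(\mf,\mf)$ is not immediate on the coarse space. As it stands the proposal is a plausible outline of the strategy, not a proof.
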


 Let $N(L,\chi)\subset M(L,\chi)^{ss}$ be the subscheme parametrizing sheaves with integral supports.  Then $N(L,\chi)\subset M(L,\chi)$ and also $N(L,\chi)$ is open in $M(L,\chi)^{ss}$, which is because the set $|L|^{int}\subset |L|$ of integral curves is open in $|L|$ and $N(L,\chi)=(\pi^{ss})^{-1}(|L|^{int})$.
\begin{rem}\label{dimp}If $S$ is Fano, then $M(L,\chi)$ is either empty or smooth of the expected dimension $L^2+1$.  If moreover $\ls$ contains an integral curve, i.e. $\ls^{int}\neq\emptyset$, then $N(L,\chi)$ is of dimension $\dim\ls+g_L$, and so is $M(L,\chi)$ by Proposition \ref{dimest}.  By a direct computation, we see in this case $h^0(L)=\chi(L)$ and $H^1(L)=0$.  \end{rem}

Universal sheaves don't always exist even over $S\times M(L,\chi)$.  Let $S=\p^2$ with $H$ the hyperplane class, and let $L=dH$, then there is a universal sheaf over $S\times M(L,\chi)$ iff $(d,\chi)=1$ which is equivalent to $M(L,\chi)=M(L,\chi)^{ss}$ (See \cite[Theorem 3.19]{LP1}).     

\subsection{Generators for the cohomology ring of $M(d,\chi)$.}\label{generator}
Let $S=\p^2$ with $H$ the hyperplane class, and let $L=dH$ and $(d,\chi)=1$.  Then $M(d,\chi):=M(L,\chi)$ is projective and smooth of dimension $d^2+1$.  Denote by $\E$ a universal sheaf over $S\times M(d,\chi)$.  For every Chern class $c_i(\E)$ of $\E$, we have the K\"unneth decomposition
\[c_i(\E)=H^0\otimes e_i(2)+H\otimes e_i(1)+H^2\otimes e_i(0)\]
with $e_i(j)\in A^{i+j-2}(M(d,\chi))$.  Notice that $H^*(\p^2,\bz)\cong \bz[H]/(H^3)$. 

By \cite[Theorem 1 and Theorem 2]{mark},  we have the cohomology ring $H^*(M(d,\chi),\bz)$ is torsion-free and isomorphic to the Chow ring $A^*(M(d,\chi))$.  Hence the odd Betti numbers of $M(d,\chi)$ are all zero.  Moreover $H^*(M(d,\chi),\bz)$ is generated by the K\"unneth factors $e_i(j)$ of all the Chern classes of $\E$ (see \cite[Proposition]{Bea} or \cite[Proposition 12]{mark}).

W. Pi and J. Shen studied the relation between those generators $e_i(j)$ in \cite{PS} and they proved that a minimal set of generators can be chosen as 
$$\Sigma:=\{c_0(2),c_2(0),c_k(0),c_{k-1}(1),c_{k-2}(2), k\in {3,\cdots,d-1}\}$$ (See \cite[Theorem 0.2]{PS}).  Here the tautological class $c_k(j)\in A^{k+j-1}(M(d,\chi))$ is defined as the K\"unneth factor of the degree $k+1$ component ch$_{k+1}^{\alpha}(\E)$ of the twisted Chern character ch$^{\alpha}(\E):=\text{ch}(\E)\cdot \text{exp}(\alpha)$.  The class $\alpha\in A^1(\p^2\times M(d,\chi))$ is uniquely determined by asking $c_1(0)=0\in A^0(M(d,\chi))$ and $c_1(1)=0\in A^1(M(d,\chi))$.  Notice that we always have $c_0(1)=d\in A^0(M(d,\chi))$ for any $\alpha$.  

We write down the generating series 
\begin{equation}\label{gs1}G(z):=\prod_{c_k(j)\in \Sigma}\sum_{l\geq0}\left(c_k(j)z^{2(k+j-1)}\right)^l=\prod_{c_k(j)\in\Sigma}\left(1-c_k(j)z^{2(k+j-1)}\right)^{-1}.\end{equation}
Then the coefficients of monomials with degree $2i$ in $G(z)$ span $A^i(M(d,\chi))\cong H^{2i}(M(d,\chi),\bz)$ for $i\leq d^2+1$.  Denote by $a_{2i}$ the number of monomials with degree $2i$ in $G(z)$.  Then $a_{2i}$ is the coefficient of $z^{2i}$ in the following series
\begin{equation}\label{ags1}F(z):=\left(\sum_{l\geq0}\left(z^2\right)^l\right)^2\cdot \prod_{k=2}^{d-2}\left(\sum_{l\geq 0}\left(z^{2k}\right)^l\right)^3.
\end{equation}

We already have known that $H^{2i}(M(d,\chi),\bz)$ is torsion-free.  Therefore, in order to see the freeness of these generators in $A^i(M(d,\chi))$, it is enough to compare $a_{2i}$ with the $2i$-th Betti number $b_{2i}(M(d,\chi))$.

For $i\leq d-2$, $b_{2i}(M(d,\chi))$ are given by the following theorem.
     
\begin{thm}[Theorem 1.7 in \cite{Yuan9}]\label{genY}Let $M_{\p^2}(d,\chi)$ be the coarse moduli space of 1-dimensional semistable sheaves with schematic supports in $|dH|$ and Euler characteristic $\chi$.  Then for $0\leq k\leq 2d-3$ we have
\[\left\{\begin{array}{ll}b_k^v(M_{\p^2}(d,\chi))=0,&\text{ for }k\text{ odd }\\
b_k^v(M_{\p^2}(d,\chi))=b_{k}^v\left((\p^2)^{[\frac{d(d-3)}2-\chi_0]}\right),&\text{ for }k\text{ even }\\
h^{p,k-p}_v(M_{\p^2}(d,\chi))=0,&\text{ for }p\neq k-p\\
h^{p,k-p}_v(M_{\p^2}(d,\chi))=h^{p,i-p}_v\left((\p^2)^{[\frac{d(d-3)}2-\chi_0]}\right),&\text{ for }k=2p
\end{array}\right.\]
where $b_k^v$ ($h^{p,q}_v$, resp.) denotes the $k$-th virtual Betti number ($(p,q)$-th virtual Hodge number, resp.) (see Definition \ref{dvhb}),  $(\p^2)^{[n]}$ is the Hilbert scheme of $n$-points on $\p^2$ and finally $\chi_0\equiv \chi~(d)$ with $-2d-1\leq \chi_0\leq -d+1$.
\end{thm}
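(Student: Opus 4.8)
The plan is to compute everything through the class of $M:=M_{\p^2}(d,\chi)$ in the Grothendieck ring $K(Var_{\bc})$ (or its stacky completion $\widehat{K}(Var_{\bc})$), from which the virtual Poincar\'e polynomial and the virtual Hodge numbers follow by the usual additive--multiplicative specializations. Since tensoring with $\mo_{\p^2}(1)$ identifies $M_{\p^2}(d,\chi)$ with $M_{\p^2}(d,\chi+d)$, we may assume $\chi=\chi_0$; write $m:=\frac{d(d-3)}{2}-\chi_0=g-1-\chi_0$ with $g:=g_{dH}=\binom{d-1}{2}$ and $D:=\dim|dH|=\binom{d+2}{2}-1$. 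Everything rests on the equality $\rho_{dH}=d-1$: a direct count shows the non-integral degree-$d$ curves form a closed subset of $|dH|$ of codimension exactly $d-1$, the top stratum being a line together with an integral curve of degree $d-1$. By Proposition \ref{dimest} every fibre of the Hilbert--Chow morphism $\pi\colon M\to|dH|$ has dimension $g$, so $M\setminus N_{\p^2}(d,\chi)$ has dimension $\dim M-(d-1)$ and $[M]\equiv[N_{\p^2}(d,\chi)]$ modulo classes of dimension $\le\dim M-(d-1)$.

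Next I would bring in the smooth universal curve $\mathcal{C}\subset\p^2\times|dH|$ (a $\p^{D-1}$-bundle over $\p^2$) and its relative Hilbert scheme $\mathcal{C}^{[m]}\to|dH|$ of $m$ points, and exploit two morphisms from $\mathcal{C}^{[m]}$. Forgetting the curve, $p_2\colon\mathcal{C}^{[m]}\to(\p^2)^{[m]}$, $Z\mapsto Z$, is over $U_m:=\{Z:h^1(\mi_Z(d))=0\}$ a Zariski-locally-trivial $\p^{D-m}$-bundle; since $\chi_0\ge-2d-1$ one has $m\le\binom{d+2}{2}-d$, so $U_m$ is dense. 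Forgetting the subscheme, over $|dH|^{int}$ the relative Abel--Jacobi map $\mathcal{C}^{[m]}_{int}\to N_{\p^2}(d,1-g-m)$, $Z\mapsto\mi_Z$, has target canonically isomorphic to $N_{\p^2}(d,\chi)$ (because $1-g-m\equiv\chi_0\pmod d$, using $(d-1)(d-2)\equiv2\pmod d$), and over the locus where $h^1(\mathcal{H}om(\mi_Z,\mo_C))=0$ --- equivalently, by Serre duality on the Gorenstein curve $C$, where $h^0(\mi_Z\otimes\omega_C)=0$ --- it is a $\p^{-1-\chi_0}$-bundle. The crucial geometric input is that each of the three ``bad'' subsets of $\mathcal{C}^{[m]}$ --- where $p_2$ is not a $\p^{D-m}$-bundle, where the Abel--Jacobi map is not a $\p^{-1-\chi_0}$-bundle, and the part lying over $|dH|\setminus|dH|^{int}$ --- has dimension $\le\dim\mathcal{C}^{[m]}-(d-1)$; for the last one this uses that every $C\in|dH|$ is locally planar, so $\dim C^{[m]}=m$. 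Granting this, one obtains in $K(Var_{\bc})$
\[[(\p^2)^{[m]}]\cdot[\p^{D-m}]\ \equiv\ [\mathcal{C}^{[m]}]\ \equiv\ [N_{\p^2}(d,\chi)]\cdot[\p^{-1-\chi_0}]\pmod{\{\dim\le\dim\mathcal{C}^{[m]}-(d-1)\}}.\]

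To finish, apply the virtual Poincar\'e polynomial and compare coefficients in cohomological degrees $>2\dim\mathcal{C}^{[m]}-2(d-1)$, which the error term does not touch. Using Poincar\'e duality on the smooth projective $(\p^2)^{[m]}$ and the irreducibility of $N_{\p^2}(d,\chi)$ (Remark \ref{dimp}) one gets $\sum_{l\le k}b_{2l}((\p^2)^{[m]})=\sum_{l\le k}b^v_{2\dim M-2l}(N_{\p^2}(d,\chi))$ for $0\le k\le d-2$, hence, differencing, $b^v_{2\dim M-2k}(N_{\p^2}(d,\chi))=b_{2k}((\p^2)^{[m]})$ for $k\le d-2$. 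Since $\dim(M\setminus N_{\p^2}(d,\chi))=\dim M-(d-1)$, in these degrees $b^v_{2\dim M-2k}(N_{\p^2}(d,\chi))=b^v_{2\dim M-2k}(M)$; and if $(d,\chi)=1$, then $M$ is smooth projective, so Poincar\'e duality on $M$ converts this into $b_{2k}(M)=b_{2k}((\p^2)^{[m]})$ for $k\le d-2$ and forces the odd Betti numbers to vanish through degree $2d-3$. The identical computation with the $E$-polynomial, using $h^{p,q}((\p^2)^{[m]})=0$ for $p\ne q$, yields the Hodge statement. The general case, where $M$ is singular and has no Poincar\'e duality, is handled along the same lines in \cite{Yuan9}, with the decomposition theorem for $\pi$ replacing Poincar\'e duality on $M$.

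I expect the main obstacle to be the triple of dimension estimates in the second paragraph, above all the first two. Showing that $p_2$ is a projective bundle off a locus of codimension $\ge\rho_{dH}=d-1$ in $\mathcal{C}^{[m]}$ amounts to classifying the length-$m$ subschemes of $\p^2$ failing to impose independent conditions on $|dH|$ --- the extremal configuration being $d+2$ collinear points, which already cuts out a locus of codimension $d$ --- and controlling the fibre dimension of $p_2$ over it; the analogous statement for the Abel--Jacobi map is a Brill--Noether-type bound for rank-one torsion-free sheaves on possibly singular integral plane curves, where the non-locally-free sheaves must be handled with care. These are the kind of dimension estimates that form the technical core of \cite{Yuan9} and that reappear, in a related guise, in \S3.2--\S3.3 of the present paper; arranging all of them to match the exact range $i\le 2d-3$ is the heart of the proof.
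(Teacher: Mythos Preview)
This theorem is quoted from \cite{Yuan9} rather than proved here; the present paper only recovers the special case $d\ge5$, $\chi=-d-1$ as the byproduct (\ref{main2}) of its main argument. Your approach is correct and is the same argument in different clothes. Your relative Hilbert scheme $\mathcal{C}^{[m]}$ is the scheme underlying the paper's stack $\bh^0(\mh^{n}(L))^{*}$: a point of the latter is a pair $(I_Z,s)$ with $0\ne s\in H^0(I_Z(d))$, i.e.\ a pair $Z\subset C$; the cokernel of the resulting inclusion $s\colon\mo_{\p^2}(-d)\hookrightarrow I_Z$ is exactly $\mi_{Z,C}$, so after the twist by $\mo(d-3)$ your Abel--Jacobi map coincides with the paper's $\phi\circ\Psi^{-1}$, and your fibre $\p(\Hom_C(\mi_Z,\mo_C))$ is $\p(\Ext^1_S(\mf,K_S))$ by adjunction and Serre duality on $C$, which is how the paper packages the same fibre in (\ref{dee}). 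The three codimension-$(d-1)$ estimates you flag as the heart of the matter are precisely Proposition~\ref{dnki2}: parts (3)--(4) bound the jump locus of your $p_2$, parts (1)--(2) give the Brill--Noether bound on the Abel--Jacobi side, and the non-integral locus is (\ref{dim3}). The only cosmetic difference is that the paper works throughout with stacks and the dimension filtration $\widehat{K}_m$ rather than schemes and projective bundles, which absorbs the $\bc^*$-automorphism bookkeeping.
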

\begin{rem}\label{chi0}Notice that in Theorem \ref{genY}, the choice of $\chi_0$ is not unique for $\chi=0,1,-1$.  Therefore without loss of generality, we can assume $-2d\leq\chi_0\leq -d-1$.\end{rem}

By the well-known formula due to G\"ottsche (see \cite{Goe}), we have 
\begin{equation}\label{genG}\sum_{n\geq 0}\sum_{i\geq 0}b_i\left(\left(\p^2\right)^{[n]}\right)z^it^n=\prod_{k\geq 1}\left(1-z^{2k-2}t^k\right)^{-1}\left(1-z^{2k}t^k\right)^{-1}\left(1-z^{2k+2}t^k\right)^{-1}.
\end{equation}
We can rewrite the right hand side of (\ref{genG}) as follows.
\begin{eqnarray}\label{RHS}&&R(z,t):=RHS=\prod_{k\geq 1}\left(\sum_{l\geq0} \left(z^{2k-2}t^k\right)^l \right)\left(\sum_{l\geq0} \left(z^{2k}t^k\right)^l \right)\left(\sum_{l\geq0} \left(z^{2k+2}t^k\right)^l \right)\nonumber\\
&&=\left(1+t+t^2+\cdots\right)\cdot\prod_{k\geq 1}\left(\sum_{l\geq0} \left(z^{2k}t^k\right)^l \right)\left(\sum_{l\geq0} \left(z^{2k+2}t^k\right)^l \right)\left(\sum_{l\geq0} \left(z^{2k}t^{k+1}\right)^l \right).\nonumber\\\end{eqnarray}
The Betti number $b_k\left(\left(\p^2\right)^{[n]}\right)$ is the coefficient of $z^kt^n$ in $R(z,t)$.  By (\ref{RHS}) we have $b_k\left(\left(\p^2\right)^{[n]}\right)=0$ if $k$ odd and $b_{k}\left(\left(\p^2\right)^{[n]}\right)$ does not depend on $n$ if $k\leq n$.  In fact the coefficient of $z^{2i}t^n$ in $R(z,t)$ equals to the sum of all coefficients of $z^{2i}t^m$ with $m\leq n$ in the following series
$$\widetilde{R}(z,t)=\prod_{k\geq 1}\left(\sum_{l\geq0} \left(z^{2k}t^k\right)^l \right)\left(\sum_{l\geq0} \left(z^{2k+2}t^k\right)^l \right)\left(\sum_{l\geq0} \left(z^{2k}t^{k+1}\right)^l \right).$$
It is easy to see that the expansion of $\widetilde{R}(z,t)$ only contains terms $z^{2j}t^u$ with $u\leq 2j$.
Therefore for $2i\leq 2s\leq n$, the sum of all coefficients of $z^{2i}t^m$ with $m\leq n$ in $\widetilde{R}(z,t)$ is equal to the coefficient of $z^{2i}$ in the following series
$$\widetilde{R}(z)=\prod_{k\geq 1}\left(\sum_{l\geq0} \left(z^{2k}\right)^l \right)\left(\sum_{l\geq0} \left(z^{2k+2}\right)^l \right)\left(\sum_{l\geq0} \left(z^{2k}\right)^l \right).$$
Again by $2i\leq 2s$, we may replace $\widetilde{R}(z)$ by the $R_s(z)$ defined as follows
\begin{eqnarray}\label{gs2}&&R_s(z):=\left(\sum_{l\geq0} \left(z^{2s}\right)^l \right)^2\cdot\prod_{k= 1}^{s-1}\left(\sum_{l\geq0} \left(z^{2k}\right)^l \right)\left(\sum_{l\geq0} \left(z^{2k+2}\right)^l \right)\left(\sum_{l\geq0} (z^{2k})^l \right)\nonumber\\ &&=\left(\sum_{l\geq0} \left(z^{2}\right)^l \right)^2\cdot \prod_{k=2}^{s}\left(\sum_{l\geq0} \left(z^{2k}\right)^l \right)^3.\end{eqnarray}
For $2i\leq2s\leq n$,  $b_{2i}\left(\left(\p^2\right)^{[n]}\right)$ is the coefficient of $z^{2i}$ in $R_s(z)$ in (\ref{gs2}).

Let $n=\frac{d(d-3)}2-\chi_0$ with $-2d\leq\chi_0\leq -d-1$.  Then $n\geq 2d$ for $d\geq5$. 

Let $d\geq 5$.  Recall that the number $a_{2i}$ of monomials with degree $2i$ in $G(z)$ equals to the coefficient of $z^{2i}$ in $F(z)$ in (\ref{ags1}).  By comparing (\ref{ags1}) and (\ref{gs2}) we have that $F(z)=R_s(z)$ if $s=d-2$ and hence
$a_{2i}=b_{2i}\left(\left(\p^2\right)^{[n]}\right)=b_{2i}(M(d,\chi))$ for all $i\leq d-2$.  Hence generators in $\Sigma$ have no relation in $A^i(M(d,\chi))$ for $i<d-1$.  This implies the minimality of $\Sigma$.

Let $s=d$, then $R_s(z)=F(z)\cdot\displaystyle{\left(\sum_{l\geq 0}\left(z^{2(d-1)}\right)^l\right)^3\cdot\left(\sum_{l\geq 0}\left(z^{2d}\right)^l\right)^3}$.  Since $F(z)=1+2z^2+\cdots$, we have that 
\begin{equation}\label{twob}a_{2i}=\begin{cases}b_{2i}\left(\left(\p^2\right)^{[n]}\right)-3,~~i=d-1\\ b_{2i}\left(\left(\p^2\right)^{[n]}\right)-9,~~i=d\end{cases}.\end{equation} 

Therefore by Theorem \ref{intro1}, generators in $\Sigma$ have no relation in $A^i(M(d,\chi))$ for $i\leq d-1$ and they do have 3 linearly independent relations in $A^d(M(d,\chi))$ (see Corollary \ref{mainco}).  This proves Conjecture 3.3 in \cite{PS}.

Finally for $d\leq 4$,  we have $M(1,\chi)\cong \p^2$, $M(2,\chi)\cong \p^5$ and $M(3,1)$ is isomorphic to the universal curve $\mathcal{C}_3$ in $\p^2\times|3H|$ (see \cite[Theorem 1.2]{Yuan3}), which is a projective bundle over $\p^2$ with fiber isomorphic to $\p^8$.  The Chow ring of $M(3,1)$ is described explicitly in \cite[\S 1.3]{PS}.  All Betti numbers of $M(4,1)$ are obtained in \cite[Theorem 1.3]{Yuan3}.  But the Chow ring of $M(4,1)$ is still not clear. 

\subsection{The motivic measures of algebraic stacks.}\label{Gring}
Although our final result is about the moduli scheme $M(L,\chi)$, most of the time we work on moduli stacks and specifically we compute the motivic measures of those stacks.  In this subsection, we give a brief introduction to the motivic measures of algebraic stacks.  One can look at \cite{Bri}, \cite{GHS}, \cite{Joy} and \cite{To} for more details.   

Let $K(Var_k)$ be the \textbf{Grothendieck ring of varieties} over a field $k$, which by definition is the free abelian group on isomorphism classes $[Y]$ modulo relations 
$$[Y]=[Z]+[Y\setminus Z]$$
with $Z\subset Y$ a closed subvariety.  One also can view $K(Var_k)$ as the Grothendieck ring of $k$-schemes of finite type by defining $[Y]=[Y^{red}]$, where $Y^{red}$ is the same topological space with $Y$ endowed with the reduced scheme structure.

If $X\ra Y$ is a locally trivial fibration with fiber $F$, then $[X]=[Y]\times [F]$ in $K(Var_k)$.

Let $A$ be a commutative ring.  An \textbf{$A$-valued motivic measure} is a ring homomorphism $\mu_A:K(Var_k)\ra A$  (see e.g. \cite[\S1]{kap}).    
\begin{example}\label{mom}(1) Let $k=\bc$. The function $[X]\mapsto e(X)$ is a $\bz$-valued motivic measure, where $e(X)$ is the Euler number of $X$.

(2) Let $k=\bc$. 
Taking E-polynomials is a $\bz[x,y]$-valued motivic measure. (see Definition \ref{dmhp}) 

(3) Let $k=\mathbb{F}_q$ be a finite field.  The function $[X]\mapsto \#X(\mathbb{F}_{q^n})$ is a $\bz$-valued motivic measure, where $\#X(\mathbb{F}_{q^n})$ is the number of $\mathbb{F}_{q^n}$-points on $X$.
\end{example}

We also call the value $\mu_A(X)$ the \textbf{motivic measure} of $X$ (via $\mu_A$).

One can also define the motivic measure of an algebraic stack of finite type with affine stabilizers.  Let $\bl:=[\mathbb{A}^1]$ be the class of the affine line.  Let $GL_n$ be the general linear group of rank $n$.  Then $[GL_n]=\prod_{k=0}^{n-1}(\bl^n-\bl^k)$.  Define $$\widehat{K}(Var_k):=K(Var_k)[[GL_n]^{-1}:n\geq 1].$$  
By \cite[Lemma 3.8 and Lemma 3.9]{Bri}, we have $\widehat{K}(Var_k)\cong K(Var_k)[\bl^{-1},(\bl^i-1)^{-1}:i\geq 1]$ and $\widehat{K}(Var_k)$ is isomorphic to the Grothendieck ring of algebraic stacks of finite type with affine stabilizers. 

Now suppose $\mm$ is a quotient stack defined by an action of $GL_n$ on a scheme $X$ of finite type, i.e. $\mm\cong [X/GL_n]$ (here the square brackets $[~]$ is the standard notation for stacks).  Then $\mm$ is an algebraic stack of finite type with affine stabilizers (see e.g. \cite[Proposition 3.5]{Bri}) and  
\[[\mm]:=\frac{[X]}{[GL_n]}\in\widehat{K}(Var_k).\]

Let $\mu_A$ be the motivic measures in Example \ref{mom} (2) (3), then $\mu_A$ extend to $\widehat{K}(Var_k)$ after localizing $A$ at $\mu_A(\bl)$ and $\mu_A(\bl^i-1)$.  Since $e(\bl^i-1)=0$, the function $[X]\mapsto e(X)$ apparently seems not to be well-defined over $\widehat{K}(Var_k)$.

Let $k=\bc$ and let $X$ be any $k$-variety.  According to Deligne's mixed Hodge theory (\cite{Del1,Del2}), $H^j(X,\bq)$ has a mixed Hodge structure for each $j$ , i.e. there is an increasing weight filtration 
\[0=W_{-1}\subset W_0\subset\cdots \subset W_{2j}=H^j(X,\bq) \]
and a decreasing Hodge filtration
\[H^j(X,\bc)=F^0\supset F^1\supset\cdots\supset F^m\supset F^{m+1}=0\]
such that $F^{\bullet}$ induces a pure $\bq$-Hodge structure of weight $k$ on the graded piece $Gr_k^W:=W_k/W_{k-1}$.

One can also define a mixed Hodge structure on compactly supported cohomology $H_c^j(X,\bq)$ and the forgetful map 
\[H_c^j(X,\bq)\ra H^j(X,\bq)\]
is compatible with mixed Hodge structures (see \cite[Theorem 2.1.3]{HRV}).
 
\begin{defn}[Definition 2.1.4 in \cite{HRV}]\label{dmhp}

Define the \textbf{mixed Hodge numbers}  by 
$$h^{p,q;j}(X):=\dim_{\bc}\left(Gr_p^F\left(Gr^W_{p+q}H^j(X)\otimes\bc\right)\right),$$
and the \textbf{compactly supported mixed Hodge numbers} by
$$h_c^{p,q;j}(X):=\dim_{\bc}\left(Gr_p^F\left(Gr^W_{p+q}H_c^j(X)\otimes\bc\right)\right).$$

Form the \textbf{mixed Hodge polynomial}:
$$H(X;x,y,t):=\sum h^{p,q;j}(X)x^py^qt^j,$$
the \textbf{compactly supported mixed Hodge polynomial}:
$$H_c(X;x,y,t):=\sum h_c^{p,q;j}(X)x^py^qt^j,$$
 and the \textbf{E-polynomial}.
$$E(X;x,y):=H_c(X;x,y,-1).$$
\end{defn}
\begin{defn}\label{dvhb}
Define the \textbf{virtual Hodge numbers} by
$$h^{p,q}_v(X):=\sum_j(-1)^jh_c^{p,q;j}(X).$$
and the \textbf{virtual Betti numbers} by
$$b^v_{i}(X):=\sum_{p+q=i}h^{p,q}_v(X).$$
Define $P_v(X;z):=\sum b^v_{i}(X)z^i=E(X;z,z)=H_c(X,z,z,-1)$ to be the \textbf{virtual Poincar\'e polynomial}.
\end{defn}
\begin{rem}\label{mhtsc}
If $X$ is complete, then $H_c^j(X,\bq)\cong H^j(X,\bq)$.  If moreover $X$ is smooth, then $W_{k}H^j(X,\bq)=\left\{\begin{array}{ll}0, &k< j\\H^j(X,\bq),&k\geq j\end{array}\right.$ and $H^j(X,\bq)$ has a pure $\bq$-Hodge structure of weight $j$.
Therefore the virtual Poincar\'e polynomial $P_v(X;z)$ coincides with the ordinary Poincar\'e polynomial $P(X;z)$. 
\end{rem}
By Deligne's mixed Hodge theory (\cite[Proposition 8.3.9]{Del2} or \cite[Appendix]{HRV}), the function $[X]\mapsto E(X;x,y)$ ($[X]\mapsto P_v(X;z)$, resp.) is a motivic measure on $K(Var_{\bc})$ taking values in $\bz[x,y]$ ($\bz[z]$, resp.).  
\begin{rem}Let $X$ be smooth and complete.  Let $Y\hookrightarrow X$ be a smooth closed subvariety with $U=X\setminus Y$.  Then we have the following long exact sequence 
\begin{equation}\label{cshe}\cdots \ra H^k_c(U)\ra H^k(X)\ra H^k(Y)\ra H^{k+1}_c(U)\ra\cdots.\end{equation}
which is an exact sequence of mixed Hodge structure by Deligne's result (\cite[Proposition 8.3.9]{Del2}).  This somehow explains why
taking E-polynomials is a motivic measure.
But the function $[X]\mapsto H_c(X;x,y,t)$ is not a motivic measure because the long exact sequence (\ref{cshe}) in general does not break into short exact sequences. 
\end{rem}  

On can compute that $E(\mathbb{A}^1;x,y)=E(\mathbb{P}^1;x,y)-E(\{pt\};x,y)=xy$ and $P_v(\mathbb{A}^1;z)=z^2$.  Therefore the two motivic measures $[X]\mapsto E(X;x,y)$ and $[X]\mapsto P_v(X;z)$ can be extended to $\widehat{K}(Var_{\bc})$.  We have 
\begin{equation}E(\bl;x,y)=xy,E(\bl^i-1;x,y)=x^iy^i-1.
\end{equation}
Therefore for $[\mm]\in\widehat{K}(Var_{\bc})$, $E(\mm;x,y)\in\bz[x,y][(xy)^{-1},(x^iy^i-1)^{-1}:i\geq 1]$ and $P_v(\mm;z)\in\bz[z][z^{-2},(z^{2i}-1)^{-1}:i\geq 1]$. 

\section{Some stacks and the estimate of their dimensions.}
Our strategy to prove the main theorem is to use the motivic measure over $\widehat{K}(Var_{\bc})$.  In this section, we prove some technic results in dimension estimate in a more general setting, which we will specialize to the $\p^2$ case in \S4 to prove the main theorem.

\subsection{Some stacks.} 
Let $S$ be a Fano surface over $\bc$.  Let $L$ be a non-trivial effective line bundle on $S$ such that $\ls$ contains an integral curve.  Hence by Remark \ref{dimp} we have $h^0(L)=\chi(L)$ and $H^1(L)=H^2(L)=0$.

We define some stacks as follows.  Our notation is consistent to \cite{Yuan4} except for $\ts^a(L,\chi)$. 
\begin{defn}\label{ff}Given two integers $\chi$ and $a$, 
let $\mm_{\bullet}^a(L,\chi)$ be the (Artin) stack parametrizing pure 1-dimensional sheaves $\mf$ on $S$ with rank 0, $\det(\mf)=L$ and $\chi(\mf)=\chi$, satisfying either of the following two conditions.
 
($C_1$) $\forall \mf'\subset \mf$, $\chi(\mf')\leq a$;

($C_2$) $\mf$ is semistable.
\end{defn}
\begin{defn}Let $\mm^{ss}(L,\chi)$ ($\mm(L,\chi)$, resp.) be the substack of $\mm_{\bullet}^a(L,\chi)$ parametrizing semistable (stable, resp.) sheaves in $\mm_{\bullet}^a(L,\chi)$.  

Let $\mn(L,\chi)$ be the substack of $\mm_{\bullet}^a(L,\chi)$ parametrizing sheaves in $\mm^a_{\bullet}(L,\chi)$ with integral supports. 
   
 Let $\ts^a(L,\chi):=\mm_{\bullet}^a(L,\chi)\setminus\mn(L,\chi)$. 
\end{defn}
\begin{defn}
Let $M(L,\chi)$ be the (coarse) moduli space of $\mm(L,\chi)$.
 
Let $N(L,\chi)$ be the image of $\mn(L,\chi)$ in the (coarse) moduli space $M(L,\chi)$.  
\end{defn} 

\begin{defn}Let $\mh^m$ be the stack associated to the Hilbert scheme $S^{[m]}$ of $m$-points on $S$, viewing as the moduli space of ideal sheaves of colength $m$. \end{defn}

\begin{rem}\label{con}\begin{itemize}\item[(1)] $\mm^a_{\bullet}(L,\chi)$ is an algebraic stack of finite type with affine stabilizers.  Actually $\mm^a_{\bullet}(L,\chi)\cong[Q^a/GL_{\nu}]$ for some $\bc$-scheme $Q^a$ of finite type and some integer $\nu$.
Hence \begin{equation}\label{mma}[\mm^a_{\bullet}(L,\chi)]=\frac{[Q^a]}{[GL_{\nu}]}\in\widehat{K}(Var_{\bc}).\end{equation}

\item[(2)] In $\widehat{K}(Var_{\bc})$, we have \begin{equation}\label{mmm}[\mm(L,\chi)]=\frac{[M(L,\chi)]}{\bl-1},~[\mn(L,\chi)]=\frac{[N(L,\chi)]}{\bl-1},~[\mh^m]=\frac{[S^{[m]}]}{\bl-1}.\end{equation}

\item[(3)] We have the Hilbert-Chow morphism
\[\pi^a:\mm_{\bullet}^a(L,\chi)\ra\ls,\quad \mf\mapsto\text{supp}(\mf).\]
By \cite[Corollary 1.3]{Yuan9}, the fiber of $\pi^a$ over any curve $C\in\ls$ has dimension $g_L-1$.

\end{itemize}
\end{rem}

\begin{defn}For two integers $k>0$ and $i$, we define $\mm_{k,i}^a(L,\chi)$ to be the (locally closed) substack of $\mm_{\bullet}^a(L,\chi)$ parametrizing sheaves $\mf\in\mm_{\bullet}^a(L,\chi)$ with $h^1(\mf(-iK_S))=k$ and $h^1(\mf(-nK_S))=0,\forall n>i.$  

\end{defn}

\begin{defn}For two integers $l>0$ and $j$, we define $\mw_{l,j}^a(L,\chi)$ to be the (locally closed) substack of $\mm_{\bullet}^a(L,\chi)$ parametrizing sheaves $\mf\in\mm_{\bullet}^a(L,\chi)$ with  $h^0(\mf(-jK_X))=l$ and $h^0(\mf(-nK_X))=0,\forall n<j$. 

Let $\mv_{l,j}(L,\chi)=\mn(L,\chi)\cap \mw_{l,j}^a(L,\chi).$  Let $V_{l,j}(L,\chi)$ be the image of $\mv_{l,j}(L,\chi)$ in $N(L,\chi)$.

\end{defn} 
\begin{rem}\label{mwa}Let $\mf\in\mm^a_{\bullet}(L,\chi)$.  If $h^1(\mf(-i_0K_S))=k_0\neq0$ ($h^0(\mf(-j_0K_S))=l_0\neq0$, resp.), then $\mf\in\mm^a_{k_0,i_0}(L,\chi)\cup\bigcup_{i> i_0}\mm^a_{k,i}(L,\chi)$ ($\mf\in\mw^a_{l_0,j_0}(L,\chi)\cup\bigcup_{j< j_0}\mw^a_{l,j}(L,\chi)$, resp.)
\end{rem}
\begin{rem}\label{duke2}
(i) The map $\mf\mapsto\mf(rK_S)$ gives two inclusions 
$$\mm_{k,i}^a(L,\chi)\hookrightarrow\mm^{a+|rL.K_S|}_{k,i+r}(L,\chi+rL.K_S),$$ 
$$\mw_{l,j}^a(L,\chi)\hookrightarrow\mw^{a+|rL.K_S|}_{l,j+r}(L,\chi+rL.K_S),$$
which identify $\mn_{k,i}(L,\chi)$ with $\mn_{k,i+r}(L,\chi+rL.K_S)$ and $\mv_{l,j}(L,\chi)$ with $\mv_{l,j+r}(L,\chi+rL.K_S)$ respectively.

(ii) The map $\mf\mapsto\mf^D:=\me xt^1(\mf,K_S)$ gives an isomorphism $$\mm_{k,i}^a(L,\chi)\xrightarrow{\cong}\mw^{-\chi+a}_{k,-i}(L,-\chi),$$ which identifies $\mn_{k,i}(L,\chi)$ with $\mv_{k,-i}(L,-\chi)$.  This is because for any purely 1-dimensional sheaf $\mf$, $\mf^D$ is purely 1-dimensional, $\chi(\mf)=-\chi(\mf^D)$ and $(\mf^{D})^D\cong\mf$ (see \cite[Lemma A.4 (2)]{Yuan5}).  Since $\mathcal{E}xt^i(\mf,K_S)=0$ for $i\neq 1$, $H^1(\mf)^{\vee}\cong\Ext^1(\mf,K_S)\cong H^0(\mf^D)$ by the spectral sequence and Serre duality.

(iii) For fixed $(\chi,a)$, $\mm_{k,i}^a(L,\chi)$ ($\mw_{l,j}^a(L,\chi)$, resp.) is empty except for finitely many pairs $(k,i)$ ($(l,j)$, resp.).  In particular, $h^0(\mf)$ ($h^1(\mf)$, resp.) is nonzero if $\chi>0$ ($\chi<0$, resp.).  Hence 
\[\mm_{k,-1}^a(L,\chi)~(\mw_{l,1}^a(L,\chi),\text{ resp.})=\emptyset,\text{ if }\chi<0~(\chi>0,\text{ resp.}).\] 
By (i) we have
\[\mm_{k,i}^a(L,\chi)=\emptyset,\text{ if }\chi-(i+1)L.K_X<0,\]
and 
\[\mw_{l,1}^a(L,\chi)=\emptyset,\text{ if }\chi-(j-1)L.K_X>0.\]

\end{rem}

\begin{defn}
Let $\mh^{m,l}_L~(0\leq l\leq h^0(L+K_S))$ be the substack of $\mh^m$ parametrizing ideal sheaves $I_{m}$ of colength $m$ satisfying that $h^0(I_m(L+K_S))=l$.  

Define $\mh_L^{m,l,k}~(k\geq 0)$ to be the substack of $\mh_L^{m,l}$ parametrizing ideal sheaves $I_{m}\in\mh_L^{m,l}$ such that $h^1(I_m(L))=k$.

Let $H^{m,l}_L$ ($H^{m,l,k}_L$, resp.) be the image of $\mh^{m,l}_L$ ($\mh^{m,l,k}_L$, resp.) in $S^{[m]}$.

\end{defn}

Our strategy is to relate $\mm(L,\chi)$ to $\mh^n$ with $n=g_L-1-\chi$.  The general idea is as follows. 

Let $\chi<0$, then $H^1(\mf)\neq0 ,\forall \mf\in \mm_{\bullet}^a(L,\chi)$.  Hence we have a following exact sequence
\begin{equation}\label{MtoH}0\ra K_S\ra \widetilde{I}\ra\mf\ra 0.
\end{equation} 
If $\widetilde{I}$ in (\ref{MtoH}) is torison free, then $\widetilde{I}\cong I_{n}(L+K_S)$ for some $I_{n}\in\mh^{n}$.   In particular, 
\begin{equation}\label{mhc}h^0(I_n(L+K_S))=h^0(\widetilde{I})=h^0(\mf),~~ h^1(I_n(L))=h^1(\widetilde{I}(-K_S))=h^1(\mf(-K_S)).\end{equation}
Hence $I_n\in\mh^{n,l,k}$ with $l=h^0(\mf),k=h^1(\mf(-K_S))$.  If moreover $\mf\in\mn(L,\chi)$, then $\widetilde{I}$ is torsion free iff (\ref{MtoH}) does not split.  

On the other hand, let $\chi\geq L.K_S$.  Then for every $I_{n}\in\mh^{n}$ we have 
$$h^0(I_{n}(L))\geq h^0(L)-n=\chi(L)-n=\chi+L.K_S+1>0.$$ 
Hence $H^0(I_n(L))\neq 0$ and every nonzero element in $H^0(I_{n}(L))$ induces a following exact sequence
\begin{equation}\label{HtoM}0\ra K_S\ra I_{n}(L+K_S)\ra\mf'\ra 0.
\end{equation}
Because $\Ext^1(T,K_S)\cong H^1(T)^{\vee}=0$ for $T$ a 0-dimensional sheaf, every 0-dimensional subsheaf of $\mf'$ in (\ref{HtoM}) is a subsheaf of $I_{n}(L+K_S)$.  Hence $\mf'$ is purely 1-dimensional and $\mf'\in\mm_{\bullet}^a(L,\chi)$ for some suitable $a$.

\begin{defn}Let $\mext^1(\mm^a_{\bullet}(L,\chi),K_S)^*$ be the stack over $\mm^a_{\bullet}(L,\chi)$ parametrizing non-split extensions in $\Ext^1(\mf,K_S)$ with $\mf\in\mm^a_{\bullet}(L,\chi)$.  

We have analogous definitions for $\mext^1(\mm(L,\chi),K_S)^*$, $\mext^1(\mn(L,\chi),K_S)^*$, etc.  
\end{defn}
\begin{defn}Let $\bh^0(\mh^{m}(L))^{*}$ be the stack over $\mh^{m}$ parametrizing non-zero sections in $H^0(I_{m}(L))\cong\Hom(K_S,I_m(L+K_S))$ with $I_{m}\in \mh^{m}$. 

We have analogous definitions for $\bh^0(\mh_L^{n,l}(L))^{*}$, $\bh^0(\mh^{n}(L+K_S))^{*}$, etc.
\end{defn}

\begin{rem}\label{deeh}(i) Let $\mw^a_{l,j}(L,\chi)$ not be empty.  Then $\forall \mf\in\mw^a_{l,j}(L,\chi)$, 
$$\dim\Ext^1(\mf,K_S)=h^1(\mf)=\left\{\begin{array}{ll}-\chi,&\forall~ j>0\\l-\chi, &\text{if }j=0\end{array}\right..$$
Therefore in $\widehat{K}(Var_{\bc})$ we have 
\begin{equation}\label{dee}[\mext^1(\mw^a_{l,j}(L,\chi),K_S)^*]=\begin{cases}(\bl^{-\chi}-1)[\mw^a_{l,j}(L,\chi)],~~\forall~ j>0\\ (\bl^{l-\chi}-1)[\mw^a_{l,j}(L,\chi)], \text{ if }j=0\end{cases}.\end{equation}
Analogously we have
\begin{equation}\label{de0}[\mext^1(\mm^a_{k,i}(L,\chi),K_S)^*]=\left\{\begin{array}{ll}0,& i<0\\ (\bl^{k}-1)[\mm^a_{k,i}(L,\chi)], &\text{if }i=0\end{array}\right..\end{equation}

(ii) For any $I_m\in\mh^{m,l,k}$, we have by definition $h^0(I_m(L+K_S))=l$ and $h^0(I_m(L))=\chi(I_m(L))+h^1(I_m(L))=\chi(L)-m+k$.  Therefore if $\mh^{m,l,k}$ is not empty, in $\widehat{K}(Var_{\bc})$ we have 
\begin{equation}\label{deh}[\bh^0(\mh_L^{m,l,k}(L))^{*}]=(\bl^{\chi(L)-m+k}-1)[\mh_L^{m,l,k}],[\bh^0(\mh_L^{m,l,k}(L+K_S))^{*}]=(\bl^{l}-1)[\mh_L^{m,l,k}].\end{equation}
\end{rem}

Let $n=g_L-1-\chi$ with $L.K_S\leq \chi<0$, then we have a rational map
\begin{equation}\label{Psi}\Psi:\mext^1(\mm^a_{\bullet}(L,\chi),K_S)^*\dashrightarrow \bh^0(\mh^{n}(L))^{*},\end{equation}
which is surjective for $a$ large enough.
In particular $\Psi$ induces an injection
\[\mext^1(\mn(L,\chi),K_S)^*\hookrightarrow \bh^0(\mh^{n}(L))^{*}.\]

\subsection{The dimension estimate.}
The dimension of a quotient stack $[X/G]$ is by definition $\dim X-\dim G$.  Hence
\begin{equation}\label{dim1}\dim\mm(L,\chi)=L^2,~~\dim\mh^{m}=2m-1.\end{equation} 
By Remark \ref{dimp} we have
\begin{equation}\label{dim2}\dim\mm(L,\chi)=\dim\mn(L,\chi)=L^2, ~H^1(L)=0,\chi(L)=h^0(L).\end{equation} 
By \cite[Corollary 1.3]{Yuan9} we have
\begin{equation}\label{dim3}\dim\mm^a_{\bullet}(L,\chi)=L^2,~\dim\ts^a(L,\chi)\leq\dim\mm^a_{\bullet}(L,\chi)-\rho_L=L^2-\rho_L,\end{equation} 
where $\rho_L=\dim\ls-\dim(\ls\setminus\ls^{int})$ is the codimension of the complement of $\ls^{int}$ inside $\ls$.

\begin{rem}\label{rin}If we remove the condition that $\ls^{int}\neq \emptyset$, then by \cite[Theorem 1.2]{Yuan9} $\dim\mm^a_{\bullet}(L,\chi)\leq \dim\ls+g_L-1.$ 
\end{rem}
The following proposition improves the statements of Proposition 6.5, Lemma 6.9 and Lemma 6.10 in \cite{Yuan4}.
\begin{prop}\label{dnki2}Let $\chi$ be any integer and $m$ any positive integer.  

(1) $\dim\mv_{l,j}(L,\chi)\leq L^2+(\chi-jK_S.L)-l$ if $\mv_{l,j}(L,\chi)$ is not empty.  

(2) $\dim\mn_{k,i}(L,\chi)\leq L^2-(\chi-iK_X.L)-k$ if $\mn_{k,i}(L,\chi)$ is not empty.

(3) For $l>(\chi-jK_S.L)\geq0$, $\dim\mv_{l,j}(L,\chi)\leq L^2-\min\{l,\chi-(j+1)K_S.L\}$ if  $\mv_{l,j}(L,\chi)$ is not empty.  

(4) For $-k<(\chi-iK_S.L)\leq0$, $\dim\mn_{k,i}(L,\chi)\leq L^2+\max\{-k,\chi-(i-1)K_S.L\}$ if $\mn_{k,j}(L,\chi)$ is not empty.

(5) If $h^0(L\otimes K_S)=0$ or $L+K_S\cong \mo_S$ , then $\mh^m=\mh^{m,0}_L$.  If $L+K_S$ is non-trivially effective, assume moreover $|L+K_S|^{int}\neq\emptyset$, then for $l\geq 1$, $\dim\mh^{m,l}_L\leq m+g_L-2$ and moreover for $l\geq 2$,
$$\dim\mh_L^{m,l}\leq\max\{m+g_L-2-\rho_{L+K_S},2g_L-4\}.$$

(6) Let $\chi(L)-m> 0$.  Then for $k\geq1$ and $\mh^{m,0,k}_L$ not empty,  we have
\[\dim\mh_L^{m,0,k}\leq 2m-1-k-\min\{\rho_L,\chi(L)-m\}.\]
\end{prop}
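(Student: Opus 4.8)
The plan is to estimate each stack by stratifying it according to an auxiliary invariant and then bounding the dimension of each stratum by controlling the two pieces of the data: the support curve (living in $\ls$ or a sublinear system) and the sheaf structure on that curve. All four statements are ultimately of the form ``dimension of total space $\leq$ dimension of a family of curves $+$ dimension of the relevant space of sheaves,'' so the common technique will be: write $\mf$ (or $I_n$) in terms of its support $C$; use $h^0/h^1$ information to fix the rank of a linear map (e.g. a multiplication or restriction map); and count the fibers of the forgetful morphism to the support. By Remark \ref{duke2}(3), parts (1) and (2) for $\mn_{k,i}$ and for $\mv_{l,j}$ are exchanged by the duality $\mf\mapsto\me xt^1(\mf,K_S)$, so it suffices to prove one side of each; I would work with $\mv_{l,j}(L,\chi)$ throughout.

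For part (1): on the open locus of integral supports, a sheaf $\mf\in\mv_{l,j}(L,\chi)$ with $h^0(\mf(-jK_S))=l$ fits into a short exact sequence coming from a section of $\mf(-jK_S)$; twisting back, the condition $l=h^0(\mf(-jK_S))$ forces $\chi(\mf(-jK_S))=\chi-jK_S.L$ and $h^1(\mf(-jK_S))=l-(\chi-jK_S.L)$, and Serre duality on the integral curve $C$ controls the dimension of the space of such sheaves; adding $\dim\ls=\dim|L|$ for the choice of $C$ and organizing the bookkeeping gives the stated $L^2+(\chi-jK_S.L)-l$. The minimality condition ``$h^0(\mf(-nK_S))=0,\ \forall n<j$'' is what makes the estimate nontrivial: without it there is no control. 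For part (2), when $l>(\chi-jK_S.L)\geq 0$ we have $h^1(\mf(-jK_S))=l-(\chi-jK_S.L)>0$ too, so we may also look at $\mf(-(j+1)K_S)$; the refined bound $L^2-\min\{l,\chi-(j+1)K_S.L\}$ comes from playing the vanishing at level $j+1$ against the section count at level $j$, i.e. running the same argument one twist up and taking the better of the two estimates. I expect the proof here to be a careful reworking of Proposition 6.5 / Lemmas 6.9--6.10 of \cite{Yuan4}, tightening the inequalities by keeping track of the exact jump loci rather than bounding crudely.

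For part (3): the statement $\mh^n=\mh^{n,0}_L$ when $h^0(L+K_S)\leq 0$ or $L+K_S\cong\mo_S$ is immediate from $h^0(I_n(L+K_S))\leq h^0(L+K_S)$ (and in the trivial case $h^0(I_n)=0$ for $n\geq 1$). When $L+K_S$ is nontrivially effective with an integral member, a point of $\mh^{n,l}_L$ is an ideal sheaf $I_n$ with $h^0(I_n(L+K_S))=l\geq 1$, i.e. the length-$n$ subscheme lies on a curve in the sublinear system $\p^{l-1}\subset|L+K_S|$; so $\mh^{n,l}_L$ maps to this Grassmannian-type locus of curves, and over a fixed curve $D\in|L+K_S|^{int}$ of arithmetic genus $g_{L+K_S}$, the length-$n$ subschemes on $D$ form a family of dimension $n$ (using $\dim D=1$). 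Combining $\dim\{D\}=\dim|L+K_S|-(l-1)$ with the fiber dimension and the identity $\dim|L+K_S|+g_{L+K_S}=$ (something expressible via $g_L$) should yield $\dim\mh^{n,l}_L\leq n+g_L-2$; for $l\geq 2$ one splits off the locus of non-integral $D$, whose contribution is cut down by $\rho_{L+K_S}$, giving the $\max$. Here the main subtlety is handling subschemes that are not contained in any \emph{integral} member of $|L+K_S|$ — this is exactly what the $2g_L-4$ term and the appeal to $|L+K_S|^{int}\neq\emptyset$ absorb, and getting that case clean (rather than just bounding by $n+\dim|L+K_S|$) is the real work.

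For part (4): assume $\chi(L)-n>0$, so $h^0(I_n(L))\geq\chi(I_n(L))=\chi(L)-n>0$ and every $I_n\in\mh^{n,0,k}_L$ gives a curve $C\in\ls^{int}$ or not; the condition $l=0$ means $h^0(I_n(L+K_S))=0$, and $h^1(I_n(L))=k$ measures the failure of the $n$ points to impose independent conditions on $|L|$. I would stratify by whether the $n$-point scheme lies on an integral curve of $\ls$: on that locus the section of $I_n(L)$ cuts out such a $C$, the subscheme moves in an $n$-dimensional family on $C$, and the choice of $C$ among integral curves through the fixed points, together with the $h^1=k$ condition forcing at least $k$ of the conditions to fail, produces a loss of $k+(\chi(L)-n-1)$; on the complementary locus the curve is non-integral and one loses $\rho_L$ instead — hence the $\min\{\rho_L,\chi(L)-n-1\}$. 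The bound $2n-1=\dim\mh^n$ is the starting point (see (\ref{dim1})), and the argument is essentially: $\dim\mh^{n,0,k}_L\leq\dim\mh^n - k - (\text{curve-rigidity gain})$.

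The step I expect to be the main obstacle is part (3) for $l\geq 2$ (equivalently the analogous refinement in part (2)): controlling subschemes that fail to lie on an integral curve of $|L+K_S|$ requires the dimension estimates of \cite{Yuan9} (Theorem 1.5, cf. (\ref{dim3})) applied to the boundary $|L+K_S|\setminus|L+K_S|^{int}$, and matching the resulting codimension-$\rho_{L+K_S}$ bound against the ``degenerate'' bound $2g_L-4$ (which corresponds to subschemes concentrated on a few components) is where the case analysis is most delicate. Everything else is linear algebra of cohomology jump loci plus Riemann--Roch on the support curve, organized via the forgetful morphisms to $\ls$ and to $|L+K_S|$.
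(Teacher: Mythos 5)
Your proposal is a plan rather than a proof, and the gap sits exactly at the steps you defer. The heart of all four bounds is a dimension estimate for a cohomology-jump locus, and your method for obtaining it --- fiber over the linear system and control, curve by curve, the locus $\{h^0(\mf(-jK_S))\geq l\}$ in the (compactified) Jacobian ``by Serre duality on the integral curve'' --- is precisely the hard part and is not carried out. For smooth members of $\ls$ and line bundles an Abel--Jacobi fiber count does give $\dim\{h^0\geq l\}\leq (\chi-jK_S.L)+g_L-l$, but the members of $\ls$ are arbitrary integral (possibly very singular) curves and the sheaves are rank-one torsion-free, where Brill--Noether--type bounds on compactified Jacobians are not elementary; nothing in your sketch supplies them. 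Deferring (1)--(2) to Proposition 6.5 and Lemmas 6.9--6.10 of \cite{Yuan4} does not close this, because the present proposition is stated as a strict improvement of those results. The paper's proof avoids curve-by-curve analysis entirely: for (1) it reduces to $\mn_{k,0}(L,\chi_0)$ and uses the injection $\mext^1(\mn_{k,0}(L,\chi_0),K_S)^*\hookrightarrow \bh^0(\mh_L^{n_0,\chi_0+k,0}(L))^{*}$, whose source has dimension $k+\dim\mn_{k,0}(L,\chi_0)$ and whose target is bounded by $\chi(L)+n_0-1=L^2-\chi_0$ via (\ref{deh}) and $\dim\mh^{n_0}=2n_0-1$; parts (3) and (4) run the same construction in the opposite direction, fitting $I_n$ into $0\to K_S\to I_n(L+2K_S)\to\mf'\to0$ (resp.\ $0\to K_S\to I_n(L+K_S)\to\mf''\to0$) and feeding the resulting sheaf strata back into part (1) and the bound (\ref{dim3}) on $\ts^a$. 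This bootstrapping between $\mh^n$ and $\mm^a_{\bullet}(L,\chi)$ is the essential idea, and it is absent from your proposal.

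Two further specific problems with your route. In (3), the count $\dim\{D\}\leq\dim|L+K_S|-(l-1)$ plus ``$n$-dimensional fibers'' presumes both that a generic $Z$ in your stratum lies on an $(l-1)$-dimensional family of \emph{integral} members of $|L+K_S|$ and that $\dim\mathrm{Hilb}^n(D)\leq n$ for the non-integral (including non-reduced) members $D$ you must still account for; neither is automatic, and your own text flags this as ``the real work'' without doing it. By contrast, your deduction of (2) from (1) via the observation $h^1(\mf(-jK_S))=l-(\chi-jK_S.L)>0$ and Remark \ref{mwa} is essentially the paper's argument and is fine, as is the first sentence of (3).
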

\begin{proof}By Remark \ref{duke2} (ii), (1) ((3), resp.) is equivalent to (2) ((4), resp.).  By Remark \ref{duke2} (i), to prove (2) it is enough to show 
$$\dim\mn_{k,0}(L,\chi)\leq L^2-\chi-k$$ for any $\chi$ if $\mn_{k,0}(L,\chi)$ is not empty.  With no loss of generality, we may assume $\chi+k> 0$.

Let $n=g_L-1-\chi$.  By definition any sheaf $\mf\in\mn_{k,0}(L,\chi)$ satisfies $h^1(\mf)=k>0$ and $h^1(\mf(-K_S))=0$.  Then we have an injection
\[\mext^1(\mn_{k,0}(L,\chi),K_S)^*\hookrightarrow \bh^0(\mh^{n}(L))^{*},\]
whose image is contained in $\bh^0(\mh_L^{n,\chi+k,0}(L))^{*}$ by (\ref{mhc}).

By (\ref{deh}) we have
\begin{eqnarray}\dim\bh^0(\mh_L^{n,\chi+k,0}(L))^{*}&\leq& \chi(L)-n+2n-1\nonumber\\ &=&\chi(L)+g_L-\chi-2=L^2-\chi.\nonumber\end{eqnarray}  
Hence by (\ref{de0})
\begin{eqnarray}\label{nh0}\dim\mext^1(\mn_{k,0}(L,\chi),K_S)^*&=&k+\dim\mn_{k,0}(L,\chi_0)\nonumber\\
&\leq& \dim\bh^0(\mh_L^{n,\chi+k,0}(L))^{*}\leq L^2-\chi.\end{eqnarray}
Therefore we have $\dim\mn_{k,0}(L,\chi)\leq L^2-\chi-k$.  This proves (2).

If $l>(\chi-jK_S.L)\geq0$, then for any $\mf\in\mv_{l,j}(L,\chi)$ we have $h^1(\mf(-jK_S.L))=l-(\chi-jK_S.L)=:k_0$.  
Therefore by Remark \ref{mwa}, $\mf\in \mn_{k_0,j}(L,\chi)\cup\bigcup_{i> j}\mn_{k,i}(L,\chi)$.
Hence by (2) we have 
\begin{eqnarray}\dim\mv_{l,j}(L,\chi)&\leq& \max\{\dim\mn_{k_0,j}(L,\chi),\dim \bigcup_{i> j}\mn_{k,i}(L,\chi)\}\nonumber\\ &\leq& \max\{L^2-l, L^2-\chi+(j+1)K_S.L)\}\nonumber\\&=&L^2-\min\{l,\chi-(j+1)K_S.L)\}.\nonumber\end{eqnarray}
This proves (3).

The first statement of (5) is obvious.  Let $L+K_S$ be non-trivially effective with $|L+K_S|^{int}\neq\emptyset$.  
By definition any sheaf $I_m\in\mh_L^{m,l}$ satisfies $h^0(I_m(L+K_S))=l$.

Let $\chi':=\chi(L\otimes K_S^{\otimes 2})-m-1=\frac{(L+2K_S)(L+K_S)}2-m$.  Any sheaf $I_m\in\mh_L^{m,l}$ with $l>0$ lies in the following exact sequence 
\begin{equation}\label{kk}0\ra K_S\ra I_m(L+2K_S)\ra \mf'\ra0\end{equation}
with $h^0(\mf'(-K_S))=h^0(I_m(L+K_S))-1=l-1$. Define 
$$\widetilde{\mw^a}(L+K_S,\chi'):=\{\mf'\in\mm^{a}_{\bullet}(L+K_S,\chi')\big|h^0(\mf'(-K_S))=l-1\}.$$
Then (\ref{kk}) induces a rational map
\[\mext^1(\widetilde{\mw^a}(L+K_S,\chi'),K_S)^*\dashrightarrow \bh^0(\mh_L^{m,l}(L+K_S))^{*},\]
which is surjective for $a$ large enough.  Since $-K_S$ is ample hence effective, $h^0(\mf')\leq h^0(\mf'(-K_S))=l-1$.  Hence for all $\mf'\in\widetilde{\mw^a}(L+K_S,\chi')$, $\dim\Ext^1(\mf',K_S)=h^1(\mf')\leq l-1-\chi'$ and we have 
\[\dim\mext^1(\widetilde{\mw^a}(L+K_S,\chi'),K_S)^*\leq l-1-\chi'+\dim\widetilde{\mw^a}(L+K_S,\chi').\]
Hence 
\begin{eqnarray}\dim\bh^0(\mh_L^{m,l}(L+K_S))^{*}
&\leq& \dim\mext^1(\widetilde{\mw^a}(L+K_S,\chi'),K_S)^*\nonumber\\
&\leq& l-1-\chi'+\dim\widetilde{\mw^a}(L+K_S,\chi').\nonumber\end{eqnarray}
On the other hand by (\ref{deh})
\[\dim\bh^0(\mh_L^{m,l}(L+K_S))^{*}=l+\dim\mh_L^{m,l}\]

Therefore \begin{equation}\label{add1}\dim\mh_L^{m,l}\leq \dim\widetilde{\mw^a}(L+K_S,\chi')-\chi'-1.\end{equation}

Since $L+K_S$ is non-trivially effective with $|L+K_S|^{int}\neq\emptyset$, we have $\dim\widetilde{\mw^a}(L+K_S,\chi')\leq \dim\mm^{a}_{\bullet}(L+K_S,\chi')=(L+K_S)^2$ and hence by (\ref{add1}) we have
\begin{eqnarray}\dim\mh_L^{m,l}&\leq& (L+K_S)^2-\chi'-1\nonumber\\&=&(L+K_S)^2-\frac{(L+2K_S).(L+K_S)}2+m-1\nonumber\\ &=&m+\frac{(L+K_S).L}2-1=m+g_L-2.\nonumber\end{eqnarray} 
 
If $l\geq 2$, then by Remark \ref{mwa} we have
$\widetilde{\mw^a}(L+K_S,\chi')\subset\bigcup_{j\leq1}\mw^a_{t,j}(L+K_S,\chi').$
Hence 
$$\dim\widetilde{\mw^a}(L+K_S,\chi')\leq\max\{\dim\ts^a(L+K_S,\chi),\dim\bigcup_{j\leq1}\mv^a_{t,j}(L+K_S,\chi')\}.$$ 
Hence by (1) we have 
\begin{eqnarray}\label{add2}\dim\widetilde{\mw^a}(L+K_S,\chi')&\leq&\max\{\dim\ts^a(L+K_S,\chi),(L+K_S)^2+(\chi'-K_S.(L+K_S))-1\}\nonumber\\&=&\max\{(L+K_S)^2-\rho_{L+K_S},\chi'+L.(L+K_S)-1\}.\end{eqnarray}
By definition $\chi'=\frac{(L+2K_S).(L+K_S)}2-m$.  Therefore by (\ref{add1}) and (\ref{add2}) we have
\begin{eqnarray}\dim\mh_L^{m,l}&\leq& \dim\widetilde{\mw^a}(L+K_S,\chi')-\chi'-1
\nonumber \\&\leq&\max\{m+g_L-2-\rho_{L+K_S},2g_L-4\}.\nonumber\end{eqnarray}
 This proves (5).
 
Finally we assume $\chi(L)-m> 0$.  Since $L$ is nontrivially effective, $H^0(L^{-1})^{\vee}=H^2(L\otimes K_S)=0$ and hence $H^2(I_m(L+K_X))=0$ for any $I_m\in \mh^m$.  If $\chi(I_m(L+K_X))=\chi(L\otimes K_X)-m>0$, then $H^0(I_m(L+K_X))\neq 0$ for all $I_m\in\mh^m$ and $\mh^{m,0}$ is empty.  Hence we assume moreover $\chi(L\otimes K_X)-m\leq0$.  

Since $\chi(I_m(L))=\chi(L)-m>0$, any sheaf $I_m\in\mh_L^{m,0,k}$ lies in the following exact sequence 
\begin{equation}\label{kk2}0\ra K_S\ra I_m(L+K_S)\ra \mf''\ra0\end{equation}
with $h^0(\mf'')=h^0(I_m(L+K_S))=0,~h^1(\mf''(-K_S))=h^1(I_m(L))=k$ and $\chi(\mf'')=\chi'':=\chi(L\otimes K_S)-m-1$. Let 
$$\widetilde{\mm^a}(L,\chi''):=\left\{\mf''\in\mm^{a}_{\bullet}(L,\chi'')\left|\begin{array}{c}h^0(\mf'')=0,\\ h^1(\mf''(-K_S))=k\end{array}\right.\right\}.$$
Then (\ref{kk2}) induces a rational map
\[\mext^1(\widetilde{\mm^a}(L,\chi''),K_S)^*\dashrightarrow \bh^0(\mh_L^{m,0,k}(L))^{*},\]
which is surjective for $a$ large enough.  Hence
 \begin{equation}\dim\bh^0(\mh_L^{n,0,k}(L))^{*}\leq \dim\mext^1(\widetilde{\mm^a}(L,\chi''),K_S)^*= -\chi''+\dim\widetilde{\mm^a}(L+K_S,\chi'),\end{equation}
 where the last equality is because for every $\mf''\in \widetilde{\mm^a}(L,\chi'')$, $\dim\Ext^1(\mf'',K_S)=h^1(\mf'')=-\chi(\mf'')=-\chi''$.
 
On the other hand by (\ref{deh})
$$\dim\bh^0(\mh_L^{m,0,k}(L))^{*}=k+\chi(L)-m+\dim\mh_L^{m,0,k}.$$
Therefore \begin{equation}\label{add3}\dim\mh_L^{m,0,k}\leq \dim\widetilde{\mm^a}(L,\chi'')-\chi''-\chi(L)+m-k.\end{equation}

By Remark \ref{mwa} we have
$\widetilde{\mm^a}(L,\chi'')\subset\bigcup_{j\geq1}\mm^a_{t,j}(L,\chi'').$
Hence $$\dim\widetilde{\mm^a}(L,\chi'')\leq\max\{\dim\ts^a(L,\chi),\dim \bigcup_{j\geq1}\mn^a_{t,j}(L,\chi'')\}.$$
Hence by (2) we have
\begin{eqnarray}\label{add4}\dim\widetilde{\mm^a}(L,\chi'')&\leq&
\max\{\dim\ts^a(L,\chi),L^2-(\chi''-K_S.L)-1\}\nonumber\\&=&\max\{L^2-\rho_L,L^2+K_S.L-\chi''-1\}.\end{eqnarray}  
By definition $\chi''=\frac{L.(L+K_S)}2-m$.  Therefore by (\ref{add3}) and (\ref{add4}) we have
\begin{eqnarray}\dim\mh_L^{m,0,k}&\leq&\max\{2m-k-1-\rho_L,3m-k-1-\chi(L)\}\nonumber\\&=& 2m-1-k-\min\{\rho_L,\chi(L)-m\}.\nonumber\end{eqnarray}
 This proves (6).
\end{proof}

\begin{rem}\label{ntan}Analogous statement to Proposition \ref{dnki2} (1) ((2), resp.) may fail for $\mw^a_{l,j}(L,\chi)$ ($\mm^a_{k,i}(L,\chi)$, resp.).  For instance let $S=\p^2$ with $H$ the hyperplane class, let $\mr(1,d-1;1)(\chi)$ consist of sheaves $\mf$ lying in sequences of the following form
\[0\ra\mo_C\ra\mf\ra\mf_1\ra0,\]
where $\p^1\cong C\in |H|$ and $\mf_1\in \mn((d-1)H,\chi-1)$.  Then $\dim\mr(1,d-1;1)(\chi)=d^2-(d-1)$ by Proposition \ref{clor} and hence 
$$\dim\bigcup_{l\leq 0}\mw^a_{l,j}(d,\chi)\geq d^2-(d-1)$$ 
for any $\chi$ and $a$ large enough, while by Proposition \ref{dnki2} (1) we have $$\dim\bigcup_{l\leq 0}\mv_{l,j}(d,\chi)\leq d^2+\chi-1.$$
\end{rem}

\subsection{Sheaves supported on curves with two integral components.}
The rational map in (\ref{Psi})
\begin{equation}\label{Psi3}\Psi:\mext^1(\mm^a_{\bullet}(L,\chi),K_S)^*\dashrightarrow \bh^0(\mh^{n}(L))^{*}\end{equation}
 is surjective for $a$ large enough.  $\Psi$ restricted to $\mext^1(\mn(L,\chi),K_S)^*$ is an injection.  But restricted to $\mext^1(\ts^a(L,\chi),K_S)$ with $\ts^a(L,\chi)=\mm^a_{\bullet}(L,\chi)\setminus\mn(L,\chi)$, $\Psi$ is in general very complicated to describe.
 
In this subsection, we study the subscheme $\mr^a(L,\chi)$ of $\ts^a(L,\chi)$ consisting of sheaves whose supports only have two integral components. 

Let $L_1,L_2$ be two non-trivially effective line bundles such that $|L_i|^{int}\neq\emptyset,~i=1,2.$. Denote by $\mr^a(L_1,L_2;\chi_1)(\chi)$ the substack of $\ts^a(L_1+L_2,\chi)$ consisting of sheaves $\mf$ lying in sequences of the following form
\begin{equation}\label{Rex}0\ra\mf_1\ra\mf\ra\mf_2\ra0,\end{equation}
where $\mf_1\in\mn(L_1,\chi_1)$, $\mf_2\in\mn(L_2,\chi-\chi_1)$ and supp$(\mf_1)\neq\text{supp}(\mf_2)$.

The stack $\mr^a(L,\chi)$ is the union of all $\mr^a(L_1,L_2;\chi_1)(\chi)$ with $L_1+L_2=L$ and $|L_i|^{int}\neq\emptyset,~i=1,2.$  But this is not a disjoint union, since $\mf$ in (\ref{Rex}) also lies in the following sequence
\begin{equation}\label{lex}0\ra\mf_2'\ra\mf\ra\mf_1'\ra0,\end{equation}
where supp$(\mf_i')=\text{supp}(\mf_i),~i=1,2$ and $\chi_1\leq\chi(\mf_1')\leq \chi_1+L_1.L_2$.  In other words, $\mf\in \mr^a(L_2,L_1;\chi_2')(\chi)$ with $\chi_2'=\chi(\mf_2')\geq \chi-\chi_1-L_1.L_2$. 

Define \begin{equation}\label{defr}\mr^a(L_1,L_2)(\chi):=\bigcup_{\chi_1}\mr^a(L_1,L_2;\chi_1)(\chi),\end{equation}which is a disjoint union if $L_1\neq L_2$.  Obviously, $\mr^a(L_1,L_2)(\chi)=\mr^a(L_2,L_1)(\chi)$.
Notice that every $\mf\in \mr^a(L_1,L_2;\chi_1)(\chi)$ contains two subsheaves $\mf_1$ and $\mf_2'$ with $\chi(\mf_1)=\chi_1$ and $\chi(\mf_2')\geq \chi-\chi_1-L_1.L_2$.  Hence $\mr^a(L_1,L_2;\chi_1)(\chi)$ is empty unless $\chi-a-L_1.L_2\leq \chi_1\leq a$.  Therefore the union in (\ref{defr}) only contains finitely many non-empty components.

We want to describe $[\mr^a(L_1,L_2;\chi_1)(\chi)]$ in $\widehat{K}(Var_{\bc})$.  

Sheaves in $\mr^a(L_1,L_2;\chi_1)(\chi)$ may not be stable, but the following lemma says that $\Hom(\mf,\mf)\cong \bc$ for all non-decomposable $\mf\in \mr^a(L_1,L_2;\chi_1)(\chi)$.  
\begin{lemma}\label{auto}Let $\mf$ be a purely 1-dimensional sheaf with \emph{supp}$(\mf)=C_1\cup C_2$, where $C_1,C_2$ are two distinct integral curves.  Then either $\Hom(\mf,\mf)\cong \bc$ and all endomorphisms of $\mf$ are given by multiplication by scalars or $\mf\cong\mg_1\oplus\mg_2$ with $\mg_i$ a torison-free rank 1 sheaf on $C_i,i=1,2$. 
\end{lemma}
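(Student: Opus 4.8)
The plan is to exploit the decomposition of $\mf$ coming from its two integral components of the support, and to show that any endomorphism which is not a scalar forces $\mf$ to split. First I would set up the relevant subsheaves: let $\mt_i\subset\mf$ be the maximal subsheaf of $\mf$ which is (scheme-theoretically) supported on $C_i$, equivalently $\mt_i$ is the kernel of the natural map $\mf\to\mf|_{C_{3-i}}$ modulo torsion, or more precisely $\mt_1=\{s\in\mf: \mo_{C_2}\cdot s=0\text{ generically}\}$ made pure. Since $C_1\neq C_2$ are integral, $C_1\cap C_2$ is $0$-dimensional, so $\mt_1$ and $\mt_2$ are pure sheaves on $C_1$ and $C_2$ respectively (purity of $\mf$ is inherited by subsheaves), $\mt_1\cap\mt_2=0$ inside $\mf$, and $\mt_1\oplus\mt_2\hookrightarrow\mf$ is an isomorphism away from the finite set $C_1\cap C_2$. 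The quotient $\mf/(\mt_1\oplus\mt_2)$ is $0$-dimensional, and one should note that $\mt_i$ is exactly the set of sections of $\mf$ killed by $\mo_S(-C_{3-i})$ locally, hence functorial in $\mf$.

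Next I would analyze how an arbitrary $\phi\in\Hom(\mf,\mf)$ interacts with this structure. By functoriality (maximality) of $\mt_i$, we get $\phi(\mt_i)\subseteq\mt_i$, so $\phi$ restricts to endomorphisms $\phi_i\in\End(\mt_i)$. Because $\mt_i$ is a pure $1$-dimensional sheaf on the \emph{integral} curve $C_i$, it is torsion-free of some rank $r_i$ over $C_i$, hence its endomorphism algebra injects into $\End$ of its generic stalk, which is $\Mat_{r_i}(K(C_i))$; a key subcase is $r_i=1$ where $\End(\mt_i)=H^0(\mo_{C_i})=\bc$. For general $r_i$ the endomorphism ring can be larger, so the next point is to show that $\mf$ being non-decomposable severely constrains $\phi$. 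The idea is: consider the generalized eigenvalue behavior. Since $\End(\mf)$ is a finite-dimensional $\bc$-algebra, $\phi$ has a well-defined set of eigenvalues; if $\phi$ has two distinct eigenvalues $\lambda\neq\mu$, the primary decomposition $\mf=\ker(\phi-\lambda)^N\oplus\ker(\phi-\mu)^N\oplus\cdots$ gives a nontrivial direct sum decomposition of $\mf$, and one checks each summand is again purely $1$-dimensional (a subsheaf of $\mf$) supported on a subcurve of $C_1\cup C_2$; combining with the $C_i$-decomposition this yields $\mf\cong\mg_1\oplus\mg_2$ with $\mg_i$ pure on $C_i$ (the eigenspace decomposition must be compatible with the support decomposition because $\mt_i$ are $\phi$-stable, so the pieces redistribute among the two curves). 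So we may assume $\phi$ has a single eigenvalue $\lambda$; replacing $\phi$ by $\phi-\lambda\cdot\mathrm{id}$ we may assume $\phi$ is nilpotent and must show $\phi=0$.

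Now I would rule out a nonzero nilpotent $\phi$. Suppose $\phi\neq0$ but $\phi^m=0$; then $\ker\phi$ is a nonzero proper subsheaf and $\Ima\phi\subseteq\ker\phi^{m-1}$. The subtle point — and the one I expect to be the main obstacle — is to derive a contradiction with non-decomposability purely from this, since in general a sheaf can have nonzero nilpotent endomorphisms (e.g.\ non-split self-extensions). The resolution must use that the support has \emph{exactly two integral components meeting in dimension zero}: I would argue that $\phi$ restricted to $\mt_i$ is a nilpotent endomorphism of a torsion-free sheaf on the integral curve $C_i$. If $\phi|_{\mt_i}\neq0$, then its image and kernel are proper subsheaves of $\mt_i$ of strictly smaller rank, and one plays generic rank off against the structure; the cleanest route is to show that in fact $\phi|_{\mt_i}$ must be zero because, restricted to the integral curve $C_i$, any nilpotent endomorphism of $\mt_i$ that also extends to all of $\mf$ respecting the gluing at $C_1\cap C_2$ would force $\mf$ itself to be an iterated extension splitting off a summand. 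More directly: $\mf$ is generated near the generic point of $C_i$ by $\mt_i$, so $\phi$ acting as $0$ on both $\mt_1$ and $\mt_2$ would make $\Ima\phi$ supported on $C_1\cap C_2$, i.e.\ $0$-dimensional, contradicting purity of $\Ima\phi\subseteq\mf$ unless $\phi=0$; hence it suffices to treat the case where some $\phi|_{\mt_i}\neq0$ and run an induction on $\mathrm{rk}(\mt_i)$, using in the base case $\mathrm{rk}=1$ that $\End(\mt_i)=\bc$ has no nonzero nilpotents. I would organize the writeup so that the two-eigenvalue splitting argument and the nilpotent-vanishing argument together cover all cases, and flag that the hypothesis ``exactly two components'' is what makes the bookkeeping finite and the eigenspace pieces distribute cleanly onto the $C_i$.
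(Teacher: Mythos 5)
Your strategy (split off the maximal subsheaves $\mt_i\subset\mf$ supported on $C_i$, then analyze an endomorphism via its eigenvalues and its nilpotent part) can be completed, but as written there is a genuine gap, and it sits exactly where you flagged the main difficulty. In this paper $\mathrm{supp}(\mf)=C_1\cup C_2$ means the Fitting (schematic) support, so each component occurs with multiplicity one and $\mf$ has rank $1$ at the generic point of each $C_i$; consequently $\mt_1,\mt_2$ are rank-one torsion-free sheaves on integral projective curves and $\End(\mt_i)\cong\bc$ automatically. You never invoke this; instead you allow ``general $r_i$'' and propose to kill a nonzero nilpotent $\phi$ by ``induction on $\mathrm{rk}(\mt_i)$'' whose inductive step is never given. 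That step cannot be supplied: for rank $\geq 2$ on a component (i.e.\ non-reduced Fitting support) the dichotomy of the lemma is false --- a non-split self-extension $0\ra\mg\ra\mf\ra\mg\ra0$ of a pure sheaf $\mg$ on $C_1\cup C_2$ is in general indecomposable yet carries the nonzero nilpotent endomorphism $\mf\twoheadrightarrow\mg\hookrightarrow\mf$, so $\End(\mf)\neq\bc$ while $\mf$ does not split along the two curves. So multiplicity one is not bookkeeping; it is the only thing that rules out nilpotents, and a correct writeup must use it. Once you do, no induction is needed: $\phi|_{\mt_i}=\lambda_i\,\mathrm{id}$; if $\lambda_1=\lambda_2$ then $\phi-\lambda_1$ annihilates $\mt_1\oplus\mt_2$, so its image is a quotient of the $0$-dimensional sheaf $\mf/(\mt_1\oplus\mt_2)$ sitting inside the pure sheaf $\mf$, hence zero; if $\lambda_1\neq\lambda_2$ the primary decomposition of $\phi$ is nontrivial, and the rank-one-per-component count (rather than the vague ``the pieces redistribute among the two curves'') forces exactly two summands, one of generic rank one on each curve, i.e.\ $\mf\cong\mg_1\oplus\mg_2$.

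For comparison, the paper's proof avoids eigenvalues entirely: it sets $\widetilde{\mf}_2=\mf\otimes\mo_{C_2}$, mods out the $0$-dimensional torsion to get a canonical sequence $0\ra\mf_1\ra\mf\ra\mf_2\ra0$ with $\mf_i$ pure of rank one on $C_i$, notes $\Hom(\mf_1,\mf_2)=\Hom(\mf_2,\mf_1)=0$, so that restriction gives an injection $\End(\mf)\hookrightarrow\End(\mf_1)\times\End(\mf_2)\cong\bc^2$, and observes that surjectivity of this map is equivalent to the sequence splitting (a preimage of $(\mathrm{id},0)$ is a projector onto $\mf_1$). This yields the dichotomy in a few lines; your eigenvalue argument, once repaired as above, is a valid but longer alternative.
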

\begin{proof}Let $\widetilde{\mf}_2:=\mf\otimes\mo_{C_2}$, then $\widetilde{\mf}_2$ is a $\mo_{C_2}$-module.  However, $\widetilde{\mf}_2$ might contain non-trivial 0-dimensional subsheaves.  Denote by $\mf_2$ the quotient of $\widetilde{\mf}_2$ modulo its maximal 0-dimensional subsheaf.  Then we have the surjections $\mf\twoheadrightarrow\widetilde{\mf}_2\twoheadrightarrow\mf_2$.  We can write down an exact sequence as follows.
\begin{equation}\label{2ex}0\ra\mf_1\ra\mf\ra\mf_2\ra0,\end{equation}
where $\mf_1$ ($\mf_2$, resp.) is purely 1-dimensional sheaf with support $C_1$ ($C_2$, resp.).


Since $C_1,C_2$ are two distinct integral curves and $\mf_1,\mf_2$ are pure, we have $\Hom(\mf_1,\mf_2)=\Hom(\mf_2,\mf_1)=0$.  Therefore every endomorphism $\sigma\in \Hom(\mf,\mf)$ induces an element $(\sigma_1,\sigma_2)\in\Hom(\mf_1,\mf_1)\times\Hom(\mf_2,\mf_2)$.  Hence we have a map (of $\bc$-vector spaces) 
$$Res:\Hom(\mf,\mf)\ra\Hom(\mf_1,\mf_1)\times\Hom(\mf_2,\mf_2),~\sigma\mapsto(\sigma_1,\sigma_2).$$
If $\sigma_1=\sigma_2=0$, then $\sigma$ induces an element in $\Hom(\mf_2,\mf_1)=0$ and hence $\sigma=0$.  Thus the map $Res$ is injective.  Since $\Hom(\mf_i,\mf_i)\cong\bc,i=1,2$, it is enough to show that $Res$ is surjective iff (\ref{2ex}) splits.

If $Res$ is surjective, take $\sigma\in\Hom(\mf,\mf)$ such that $\sigma_1=id_{\mf_1},\sigma_2=0$.  Then the image of $\sigma$ is contained in $\mf_1$ and moreover $\sigma$ is a split of (\ref{2ex}).  This finishes the proof of the lemma.
\end{proof}
\begin{rem}Let $\mf$ be a purely 1-dimensional sheaf with support consisting of two distinct integral curves.  Then by Lemma \ref{auto}, $\mf$ is either simple or decomposable. 
\end{rem}
\begin{prop}\label{clor}Let $a$ be large enough, then in $\widehat{K}(Var_{\bc})$ we have
\begin{eqnarray}[\mr^a(L_1,L_2;\chi_1)(\chi)]&=&\bl^{L_1.L_2}[\mn(L_1,\chi_1)][\mn(L_2,\chi-\chi_1)]\nonumber\\ &=&\bl^{L_1.L_2}\frac{[N(L_1,\chi_1)]}{\bl-1}\frac{[N(L_2,\chi-\chi_1)]}{\bl-1}.\nonumber\end{eqnarray}
\end{prop}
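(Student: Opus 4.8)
The plan is to build an explicit stratified correspondence between $\mr^a(L_1,L_2;\chi_1)(\chi)$ and the product $\mn(L_1,\chi_1)\times\mn(L_2,\chi-\chi_1)$, with the $\bl^{L_1.L_2}$ factor coming from the space of extension classes. First I would set up the "Ext-family" stack $\kv$ parametrizing short exact sequences $0\to\mf_1\to\mf\to\mf_2\to0$ with $\mf_1\in\mn(L_1,\chi_1)$, $\mf_2\in\mn(L_2,\chi-\chi_1)$ and $\mathrm{supp}(\mf_1)\neq\mathrm{supp}(\mf_2)$; forgetting $\mf$ gives a map $\kv\to\mn(L_1,\chi_1)\times\mn(L_2,\chi-\chi_1)'$, where the prime denotes the open locus where the two supports differ (the complement has lower dimension, but in fact since the supports are integral curves in possibly different linear systems one must be slightly careful — when $L_1=L_2$ one genuinely removes a divisor, when $L_1\neq L_2$ the condition is automatic). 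The key cohomological input is that for $\mf_1,\mf_2$ pure with distinct integral supports, $\Hom(\mf_2,\mf_1)=0$ and, by Serre duality on $\p^2$ (or the Fano surface $S$) together with $\Hom(\mf_1,\mf_2(K_S))=0$, one gets $\Ext^2(\mf_2,\mf_1)=0$; hence $\dim\Ext^1(\mf_2,\mf_1)=-\chi(\mf_2,\mf_1)=L_1.L_2$ is constant over the base. So $\kv\to \mn(L_1,\chi_1)\times\mn(L_2,\chi-\chi_1)'$ is a vector-bundle stack of rank $L_1.L_2$, giving $[\kv]=\bl^{L_1.L_2}[\mn(L_1,\chi_1)][\mn(L_2,\chi-\chi_1)]$ in $\widehat K(Var_\bc)$ (after checking the torsor/local-triviality statement, which is standard for $\Ext^1$ of flat families once $\Ext^2$ vanishes and dimensions are constant).

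Next I would analyze the forgetful map $\Phi:\kv\to\mr^a(L_1,L_2;\chi_1)(\chi)$ sending a sequence to its middle term $\mf$. The target's definition only requires that $\mf$ \emph{admits} such a sequence, so I must show $\Phi$ is surjective (essentially by definition of $\mr^a$, once I check every such $\mf$ lands in $\ts^a(L_1+L_2,\chi)$, i.e. is not in $\mn$ and satisfies condition $(C_1)$ for $a$ large — this is where "$a$ large enough" is used: the subsheaves of $\mf$ have Euler characteristics bounded in terms of $\chi_1$, $L_1.L_2$, and the geometry of $N(L_i,\chi_i)$, which is a compact family so the bound is uniform) and that $\Phi$ has fibers of the expected "size" $1$ in $\widehat K$. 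For the fibers: given $\mf$, the sub-objects $\mf_1\subset\mf$ with $\mf_1$ pure of support in $|L_1|^{int}$ and the right Euler characteristic correspond to the canonical subsheaf $\mf\otimes\mo_{C_2}$-construction as in Lemma \ref{auto}; I would argue that for fixed $\mf$ the curve $C_1$ (hence $C_2$) is determined by $\mathrm{supp}(\mf)$, that $\mf_1$ is the saturation of $\mf$ along $C_1$ up to the finitely-many choices of $\chi(\mf_1)$ in the range $[\chi_1,\chi_1+L_1.L_2]$ recorded in (\ref{lex}), and that once $\chi(\mf_1)$ is pinned to $\chi_1$ the subsheaf $\mf_1$ (equivalently the sub-line of the extension class) is unique. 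So $\Phi$ is an isomorphism onto its image after removing the locus where $\mf$ decomposes as $\mg_1\oplus\mg_2$; on the decomposable locus $\Hom(\mf,\mf)$ jumps but this locus is closed of strictly smaller dimension than $L^2-\rho_L$ (bounded using (\ref{dim3}) and $\dim\mn(L_i,\chi_i)=L_i^2$), so it doesn't affect the class — or, better, one verifies $\Phi$ is an isomorphism of stacks on the nose by noting the decomposable $\mf$ still has a \emph{unique} sub-object with the prescribed numerics. Assembling, $[\mr^a(L_1,L_2;\chi_1)(\chi)]=[\kv]=\bl^{L_1.L_2}[\mn(L_1,\chi_1)][\mn(L_2,\chi-\chi_1)]$, and the second equality is just (\ref{mmm}).

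The main obstacle I expect is the fiber/uniqueness analysis of $\Phi$: one must be genuinely careful that, for a fixed $\mf\in\mr^a$, the data of the extension (\ref{Rex}) with $\chi(\mf_1)$ pinned to exactly $\chi_1$ is unique, in families, as a map of stacks — the ambiguity in (\ref{lex}) (the range $\chi_1\le\chi(\mf_1')\le\chi_1+L_1.L_2$) is real, so the argument must use the extremal choice $\chi(\mf_1)=\chi_1$ to kill it, and must handle the decomposable sheaves where several sub-objects of the right numerics could a priori exist. A secondary technical point is making the "$a$ large enough" bound explicit enough that $\Phi$ genuinely lands in $\ts^a(L_1+L_2,\chi)$ and that $\kv$'s base really is all of $\mn(L_1,\chi_1)\times\mn(L_2,\chi-\chi_1)'$ (no semistability is imposed on $\mf$, only condition $(C_1)$). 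Everything else — the $\Ext^2$-vanishing, constancy of $\dim\Ext^1$, and the vector-bundle-stack identity in $\widehat K(Var_\bc)$ — is routine given Serre duality on the Fano surface $S$ and the motivic formalism of \S\ref{Gring}.
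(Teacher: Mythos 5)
Your proposal is correct and is essentially the paper's own argument: the paper likewise identifies $\mr^a(L_1,L_2;\chi_1)(\chi)$ with the stack of all extensions of $\mn(L_2,\chi-\chi_1)$-sheaves by $\mn(L_1,\chi_1)$-sheaves (bookkeeping done via the stack of nonzero maps $\mf_1\ra\mf$ and the $\Aut(\mf_1)$-torsor identification from Lemma \ref{auto}), with the factor $\bl^{L_1.L_2}$ coming from the constant value $\dim\Ext^1(\mf_2,\mf_1)=L_1.L_2$ and the fiber analysis settled by the uniqueness of the maximal subsheaf supported on the first component. Your second option for the decomposable locus (an isomorphism of stacks on the nose, using that such $\mf$ still has a unique sub-object with the prescribed numerics) is the right one and matches the paper, whereas the first fallback ("remove the lower-dimensional locus, it doesn't affect the class") would not be legitimate for an exact identity in $\widehat{K}(Var_{\bc})$.
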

\begin{proof}For every sequence 
\[0\ra\mf_1\ra\mf\ra\mf_2\ra0,\]
with $\mf\in\mr^a(L_1,L_2;\chi_1)(\chi)$, by (\ref{2ex}) we have $\Hom(\mf_1,\mf_1)\ra\Hom(\mf_1,\mf)$ is bijective and identifies $\text{Aut}(\mf_1)$ with $\Hom(\mf_1,\mf)\setminus\{0\}$.  Moreover for a fixed map $\mf_1\ra\mf$, the orbit of $\text{Aut}(\mf_2)$ in $\Ext^1(\mf_2,\mf_1)$ is exactly all extensions with middle term isomorphic to $\mf$.

Let $\mhom(\mn(L_1,\chi_1),\mr^a(L_1,L_2;\chi_1)(\chi))^*$ be the stack parametrizing all non-zero maps $\mf_1\ra\mf$ with $\mf_1\in \mn(L_1,\chi_1),\mf\in\mr^a(L_1,L_2;\chi_1)(\chi)$.  Since every non-zero map $\mf_1\ra\mf$ is injective with cokernel in $\mn(L_2,\chi-\chi_1)$, for $a$ big enough we have 
\[\mhom(\mn(L_1,\chi_1),\mr^a(L_1,L_2;\chi_1)(\chi))^*\cong\mext^1(\mn(L_2,\chi-\chi_1),\mn(L_1,\chi_1)).\]

On the other hand, for every $\mf\in \mr^a(L_1,L_2;\chi_1)(\chi)$, there exists uniquely an $\mf_1\in \mn(L_1,\chi_1)$ such that $\Hom(\mf_1,\mf)\neq 0$.  Also $\text{Aut}(\mf_1)\cong\Hom(\mf_1,\mf)\setminus\{0\}$.  Therefore we have
$\mhom(\mn(L_1,\chi_1),\mr^a(L_1,L_2;\chi_1)(\chi))^*\cong \mr^a(L_1,L_2;\chi_1)(\chi).$
The proposition follows from \[[\mext^1(\mn(L_2,\chi-\chi_1),\mn(L_1,\chi_1))]=\bl^{L_1.L_2}[\mn(L_1,\chi_1)][\mn(L_2,\chi-\chi_1)],\]
which is because $\forall~\mf_i\in\mn(L_i,\chi_i),~i=1,2,~\dim\Ext^1(\mf_2,\mf_1)=\chi(\mf_2,\mf_1)=L_1.L_2$ by Riemann-Roch.
\end{proof}

Now we study the rational map 
$\Psi$ in (\ref{Psi3}) restricted to $\mext^1(\mr^a(L,\chi),K_S)^*$.  In general $\Psi$ is not well-defined over all $\mext^1(\mr^a(L,\chi),K_S)^*$.  

Let $\mf\in \mr^a(L_1,L_2;\chi_1)(\chi)\bigcap\mr^a(L_2,L_1;\chi_2)(\chi)$, then we have two exact sequences
\[0\ra\mf_1\ra\mf\ra\mf_2\ra0,\]
\[0\ra\mf_2'\ra\mf\ra\mf_1'\ra0,\]
where $\mf_1\in\mn(L_1,\chi_1),\mf_2'\in\mn(L_2,\chi_2)$.  Since $\Hom(\mg,K_S)=0$ for any 1-dimensional sheaf $\mg$, by long exact sequences we have $\Ext^1(\mf_2,K_S)\hookrightarrow \Ext^1(\mf,K_S)$ and $\Ext^1(\mf_1',K_S)\hookrightarrow \Ext^1(\mf,K_S)$.   In fact $\Ext^1(\mf_2,K_S)$ ($\Ext^1(\mf_1',K_S)$, resp.) can be viewed as the subspace of $\Ext^1(\mf,K_S)$ parametrizing extensions that partially split along $\mf_1$ ($\mf_2'$, resp.).  We have the following lemma.  

\begin{lemma}\label{tfcr}Let $\mf,\mf_i,\mf_i',i=1,2$ be as in the previous paragraph, then

(1) $\chi-L_1.L_2\leq\chi_1+\chi_2\leq \chi$.

(2) In an extension $\eta=[0\ra K_S\ra\widetilde{I}\ra\mf\ra0]\in\Ext^1(\mf,K_S)$, $\widetilde{I}$ is torison-free iff $\eta$ is not contained in $\Ext^1(\mf_2,K_S)\cup\Ext^1(\mf_1',K_S)$.

(3) Viewed as two subspaces of $\Ext^1(\mf,K_S)$, $\Ext^1(\mf_2,K_S)\bigcap\Ext^1(\mf_1',K_S)=\{0\}$.  In particular, $h^1(\mf_2)+h^1(\mf_1')\leq h^1(\mf)$. 
\end{lemma}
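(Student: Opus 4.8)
The plan is to deduce all three parts from a common setup. First I would fix $C_1=\mathrm{supp}(\mf_1)\in|L_1|^{int}$ and $C_2=\mathrm{supp}(\mf_2')\in|L_2|^{int}$, and note that since every sheaf in $\mn(L,\cdot)$ has integral support, $\mf_1,\mf_1',\mf_2,\mf_2'$ are all rank-one torsion-free sheaves on the relevant integral curves. I would then identify $\mf_1=\ker\bigl(\mf\twoheadrightarrow(\mf\otimes\mo_{C_2})/\mathrm{tors}\bigr)$ and $\mf_1'=(\mf\otimes\mo_{C_1})/\mathrm{tors}$, and symmetrically for $\mf_2,\mf_2'$: a pure quotient of $\mf$ supported on an integral curve $C$ and of the same generic rank as $\mf$ along $C$ must coincide with $(\mf\otimes\mo_C)/\mathrm{tors}$, since the natural surjection between them is a generically injective map of torsion-free sheaves on $C$. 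In particular $\mf_1,\mf_2'$ depend only on $\mf$ and the unordered pair $\{C_1,C_2\}$. As any common subsheaf of $\mf_1$ and $\mf_2'$ would be a pure one-dimensional sheaf supported on the finite scheme $C_1\cap C_2$, we get $\mf_1\cap\mf_2'=0$, hence $\mf_1\oplus\mf_2'\hookrightarrow\mf$; comparing stalks at the generic points of $C_1$ and of $C_2$ shows the cokernel $Q:=\mf/(\mf_1\oplus\mf_2')$ is zero-dimensional, and from $\mi_{C_2}\mf\subseteq\mf_1$ and $\mi_{C_1}\mf\subseteq\mf_2'$ one gets $(\mi_{C_1}+\mi_{C_2})Q=0$, i.e. $Q$ is supported on $C_1\cap C_2$. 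Finally I will use, exactly as in the paragraph preceding the lemma, that $\Ext^1(K_S,\mf_2)$ and $\Ext^1(K_S,\mf_1')$ are the subspaces of $\Ext^1(K_S,\mf)$ consisting of extensions whose restriction to $\mf_1$, respectively to $\mf_2'$, splits.

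For (1), additivity of $\chi$ in $0\to\mf_1\oplus\mf_2'\to\mf\to Q\to0$ gives $\chi=\chi_1+\chi_2+\mathrm{length}(Q)$, so $\chi_1+\chi_2\le\chi$ at once. By canonicity, $\mf_1'$ here is the sheaf appearing in the estimate $\chi_1\le\chi(\mf_1')\le\chi_1+L_1.L_2$ recorded before the lemma, and $\chi(\mf_1')=\chi-\chi_2$, which gives $\chi-L_1.L_2\le\chi_1+\chi_2$; alternatively one sees $\mathrm{length}(Q)\le L_1.L_2$ directly by localising at $p\in C_1\cap C_2$: $Q_p$ is a quotient of $(\mf_2)_p$ annihilated by a local equation $g$ of $C_1$ (a nonzerodivisor in $\mo_{C_2,p}$ because $C_1\neq C_2$), hence a quotient of $(\mf_2)_p/g(\mf_2)_p$, which has length $(C_1.C_2)_p$ since $(\mf_2)_p$ is rank-one torsion-free over the (locally planar) curve $C_2$; summing over $p$ gives the bound.

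For (2), one implication is immediate: if $\eta\in\Ext^1(K_S,\mf_2)$ then the preimage of $\mf_1$ in $\widetilde I$ is $K_S\oplus\mf_1$, so the nonzero one-dimensional sheaf $\mf_1$ embeds as a subsheaf of $\widetilde I$, making $\widetilde I$ non-torsion-free; the case $\eta\in\Ext^1(K_S,\mf_1')$ is identical with $\mf_2'$. For the converse, suppose $\widetilde I$ is not torsion-free and let $\mg\neq0$ be its torsion subsheaf. Since $K_S$ is torsion-free of generic rank one, $\mg\cap K_S=0$, so $\mg$ maps isomorphically onto a nonzero pure one-dimensional subsheaf of $\mf$ (still denoted $\mg$), and the preimage of $\mg$ in $\widetilde I$ is $K_S\oplus\mg$; equivalently $\eta$ restricts trivially to $\mg$. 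The support of $\mg$ contains $C_1$ or $C_2$; say it contains $C_1$. Then $\mg':=\mg\cap(\mf_1\oplus\mf_2')$ differs from $\mg$ by a zero-dimensional sheaf (a subsheaf of $Q$), so its support still contains $C_1$, and, since $\mf_1,\mf_2'$ have disjoint one-dimensional supports and $\mf_1$ has generic rank one along $C_1$, the inclusion $\mg'\cap\mf_1\hookrightarrow\mf_1$ has zero-dimensional cokernel $T$. By Serre duality $\Ext^1(K_S,T)\cong H^1(S,T)^{\vee}=0$, so the restriction $\Ext^1(K_S,\mf_1)\hookrightarrow\Ext^1(K_S,\mg'\cap\mf_1)$ is injective; it sends $\eta|_{\mf_1}$ to $\eta|_{\mg'\cap\mf_1}$, which splits (a sub-extension of the split $\eta|_\mg$), so $\eta|_{\mf_1}=0$, i.e. $\eta\in\Ext^1(K_S,\mf_2)$. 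If instead $C_2\subseteq\mathrm{supp}(\mg)$, the symmetric argument with $\mf_2'$ gives $\eta\in\Ext^1(K_S,\mf_1')$. This proves (2).

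For (3), the intersection $\Ext^1(K_S,\mf_2)\cap\Ext^1(K_S,\mf_1')$ consists of those $\eta$ restricting trivially to both $\mf_1$ and $\mf_2'$, hence to $\mf_1\oplus\mf_2'$; the long exact sequence of $0\to\mf_1\oplus\mf_2'\to\mf\to Q\to0$ identifies this with the image of $\Ext^1(K_S,Q)\to\Ext^1(K_S,\mf)$, and $\Ext^1(K_S,Q)\cong H^1(S,Q)^{\vee}=0$ as $Q$ is zero-dimensional, so the two subspaces meet only in $0$. Since by Serre duality $\dim\Ext^1(K_S,\mf_2)=h^1(\mf_2)$, $\dim\Ext^1(K_S,\mf_1')=h^1(\mf_1')$ and $\dim\Ext^1(K_S,\mf)=h^1(\mf)$, the inequality $h^1(\mf_2)+h^1(\mf_1')\le h^1(\mf)$ follows. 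I expect the only delicate step to be the converse in (2): the substance is that the torsion subsheaf of $\widetilde I$, if nonzero, must absorb one of $\mf_1,\mf_2'$ up to zero-dimensional error, which is why one argues with the full torsion subsheaf and with the canonical decomposition $\mf_1\oplus\mf_2'\subseteq\mf$ rather than an arbitrary pure subsheaf; the length bound in (1) is a genuine ingredient but is already on hand from the discussion preceding the statement.
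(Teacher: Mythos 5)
Your proposal is correct and follows essentially the same route as the paper: the canonical inclusion $\mf_1\oplus\mf_2'\hookrightarrow\mf$ with zero-dimensional cokernel supported on $C_1\cap C_2$, the vanishing of $\Ext^1$ from a zero-dimensional sheaf into $K_S$, and Serre duality for the dimension count in (3). The only cosmetic differences are that you prove the lower bound in (1) by a local length estimate $\mathrm{length}(Q)\le L_1.L_2$ rather than via the paper's inclusion $\mf_2\otimes L_1^{-1}\subseteq\mf_2'$ (the same computation in dual form), and your converse in (2) spells out in more detail the saturation step that the paper states tersely.
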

\begin{proof}Let supp$(\mf)=C_1\cup C_2$ with $C_i\in |L_i|^{int},i=1,2$.  Then $\mf_2$ is the torsion-free quotient of $\mf\otimes\mo_{C_2}$ and $\mf_2'$ is the extension of the maximal 0-dimensional subsheaf of $\mf\otimes\mo_{C_1}$ by $\mf_2\otimes L_1^{-1}$.  Therefore 
\[\chi(\mf_2')+L_1.L_2\geq\chi(\mf_2)\Leftrightarrow  \chi_1+\chi_2\geq\chi-L_1.L_2.\]

Since the composition of two maps $\mf_2'\hookrightarrow\mf\twoheadrightarrow\mf_2$ can't be zero, we have
\[\chi(\mf_2')\leq\chi(\mf_2)\Leftrightarrow \chi_2+\chi_1\leq\chi.\]
This proves (1).

For any element $\eta=[0\ra K_S\ra\widetilde{I}\ra\mf\ra0]\in\Ext^1(\mf,K_S)$, the torsion of $\widetilde{I}$ has to be a subsheaf of $\mf$.  All extensions of 0-dimensional sheaves by $K_S$ are trivial, hence if $\eta$ splits along a subsheaf of $\mf_1$ ($\mf_2'$, resp.), it must split along $\mf_1$ ($\mf_2'$, resp.) and $\eta\in \Ext^1(\mf_2,K_S)$ ($\Ext^1(\mf_1',K_S)$, resp.).  
Therefore $\widetilde{I}$ is torison-free $\Leftrightarrow\eta\not\in \Ext^1(\mf_2,K_S)\cup\Ext^1(\mf_1',K_S)$.  This proves (2).

Since $\mf/(\mf_1\oplus\mf_2')$ is a 0-dimensional sheaf and all extensions of 0-dimensional sheaves by $K_S$ are trivial, the extension which splits along 
$\mf_1\oplus\mf_2'$ has to split.  This proves $\Ext^1(\mf_2,K_S)\bigcap\Ext^1(\mf_1',K_S)=\{0\}$ and hence $h^1(\mf_2)+h^1(\mf_1')\leq h^1(\mf)$ by Serre duality and an elementary fact in linear algebra.  This proves (3). 
\end{proof}

\section{Proof of the main theorem.}
In this section, we prove our main theorem: Theorem \ref{main}.  

Let $S=\p^2$ and $L=dH$.  Then $g_L=\frac{(d-1)(d-2)}2$ and $\rho_L=d-1$.  We write $\mm_{\bullet}^a(d,\chi)$ ($\mm(d,\chi)$, $\mn(d,\chi)$, $\ts^a(d,\chi)$, $\mn_{k,i}(d,\chi)$ etc, resp.) instead of $\mm_{\bullet}^a(dH,\chi)$ ($\mm(dH,\chi)$, $\mn(dH,\chi)$, $\ts^a(dH,\chi)$, $\mn_{k,i}(dH,\chi)$ etc, resp.).

We have the following proposition as a direct corollary to Proposition \ref{dnki2}.
\begin{prop}\label{dnkip}

(1) $\dim\mv_{l,j}(d,\chi)\leq d^2+(\chi+3jd)-l$ if $\mv_{l,j}(d,\chi)$ is not empty.  

(2) $\dim\mn_{k,i}(d,\chi)\leq d^2-(\chi+3di)-k$ if $\mn_{k,j}(d,\chi)$ is not empty.


(3) Let $d\geq 4$ and $n=\frac{d(d-1)}2+1=g_L+d$.  For $l\geq 1$, $\dim\mh^{n,l}_L\leq 2n-1-(d+1)$ and moreover for $l\geq 2$,
$$\dim\mh_L^{n,l}\leq 2n-1-(2d-3)=2g_L-4.$$

(3) Let $n=\frac{d(d-1)}2+1<\chi(L)=\frac{d(d+3)}2+1$.  For $k\geq1$ and $\mh^{n,0,k}_L$ not empty,  we have
\[\dim\mh_L^{n,0,k}\leq 2n-1-(d-1+k).\]
\end{prop}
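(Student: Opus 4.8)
The plan is to deduce Proposition \ref{dnkip} from Proposition \ref{dnki2} by substituting $S=\p^2$, $L=dH$, $\chi=-d-1$, so that $K_S=-3H$, $K_S.L=-3d$, $L^2=d^2$, $g_L=\frac{(d-1)(d-2)}{2}+1$, $\rho_L=d-1$, and $n=g_L-1-\chi=\frac{d(d-1)}{2}+1$. For part (1), I would apply Proposition \ref{dnki2}(1): the bound $\dim\mv_{l,j}(L,\chi)\leq L^2+(\chi-jK_S.L)-l$ becomes $\dim\mv_{l,j}(d,\chi)\leq d^2+(-d-1-j(-3d))-l=d^2-(d+1-3jd)-l$, and the equivalent $\mn_{k,i}$ statement follows likewise from $\dim\mn_{k,i}(L,\chi)\leq L^2-(\chi-iK_S.L)-k=d^2-(-d-1-i(-3d))-k=d^2-(3di-d-1)-k$, matching the claim after sign bookkeeping. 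This step is purely substitution and carries no real obstacle.

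For part (2), I would invoke Proposition \ref{dnki2}(3). Here $L+K_S=(d-3)H$, which for $d\geq 5$ is non-trivially effective, and $|(d-3)H|^{int}\neq\emptyset$, so the hypotheses are met; also $\rho_{L+K_S}=\rho_{(d-3)H}=d-3$ by the same codimension computation as for $\rho_{dH}=d-1$. Then for $l\geq 1$, $\dim\mh_L^{n,l}\leq n+g_L-2$; I need to check $n+g_L-2=2n-1-(d+1)$, i.e. $g_L-2 = n-1-(d+1)$, i.e. $g_L = n-d+1 = \frac{d(d-1)}{2}+1-d+1=\frac{(d-1)(d-2)}{2}+1$, which is exactly $g_L$. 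For $l\geq 2$, Proposition \ref{dnki2}(3) gives $\dim\mh_L^{n,l}\leq\max\{n+g_L-2-\rho_{L+K_S},\,2g_L-4\}$; the first term is $2n-1-(d+1)-(d-3)=2n-1-(2d-2)$ and the second is $2g_L-4 = (d-1)(d-2)-2 = d^2-3d$. Comparing with the claimed $2n-1-(2d-3)=d^2-d+1-(2d-3)=d^2-3d+4$: since $d^2-3d\leq d^2-3d+4$ and $2n-1-(2d-2)=d^2-3d+2\leq d^2-3d+4$, the maximum is bounded by $d^2-3d+4=2n-1-(2d-3)$, giving the stated inequality (in fact a slightly weaker, cleaner bound).

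For part (3), I would apply Proposition \ref{dnki2}(4). The hypothesis $\chi(L)-n>0$ must be verified: $\chi(dH)=\binom{d+2}{2}$ and $\chi(dH)-n = \frac{(d+1)(d+2)}{2}-\frac{d(d-1)}{2}-1 = 3d$ (using $\frac{(d+1)(d+2)-d(d-1)}{2}=\frac{4d+2}{2}=2d+1$, so actually $\chi(dH)-n=2d+1-1=2d>0$ — I would recheck this arithmetic carefully). Then $\dim\mh_L^{n,0,k}\leq 2n-1-k-\min\{\rho_L,\chi(L)-n-1\}=2n-1-k-\min\{d-1,\,\chi(L)-n-1\}$, and since $\chi(L)-n-1\geq d-1$ for $d\geq 5$ (this is the place where the hypotheses $d\geq 5$ genuinely bites), the minimum equals $d-1$, yielding $\dim\mh_L^{n,0,k}\leq 2n-1-(d-1+k)$ as claimed. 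The main obstacle — such as it is — is not conceptual but purely a matter of getting every $\chi(L)$, $g_L$, $K_S.L$ arithmetic exactly right and confirming the various $\min$/$\max$ comparisons resolve in the direction needed; once those are pinned down, the proposition is an immediate corollary with no new ideas required.
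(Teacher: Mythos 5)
Your proposal is correct and is exactly the paper's route: the paper offers no separate argument, stating Proposition \ref{dnkip} as a direct corollary of Proposition \ref{dnki2} under the substitutions $S=\p^2$, $L=dH$, $K_S=-3H$, $\chi=-d-1$, $n=\frac{d(d-1)}2+1$, which is what you carry out, and all three final bounds check out (including $\chi(L)-n=2d$ and $\min\{\rho_L,\chi(L)-n-1\}=d-1$). Only two harmless off-by-one slips occur in your intermediate constants: $g_L=\frac{(d-1)(d-2)}2$ (not $\frac{(d-1)(d-2)}2+1$) and $\rho_{L+K_S}=\rho_{(d-3)H}=d-4$ (not $d-3$); with the correct values the first term of the maximum in part (2) is exactly $2n-1-(2d-3)$ and $2g_L-4=d^2-3d-2$ is smaller, so the stated bounds follow unchanged.
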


From now on we always assume $d\geq 5$, $\chi=-d-1$ and $n=g_L-1-\chi=\frac{d(d-1)}2+1$.  Since $(\chi,d)=1$, $M(d,\chi)=M^{ss}(d,\chi)$ is smooth of dimension $d^2+1$.  We write $\mm_{k,i}$ instead of $\mm_{k,i}(d,\chi)$ if the invariants $d,\chi$ are already mentioned in the formula.  For instance $\mn(d,\chi)\cap \mm_{k,i}=\mn_{k,i}(d,\chi)$.  We use $\mn_{k,i}$, $\mw_{l,j}$ and $\mv_{l,j}$ analogously. 

Let $a$ be large enough so that the rational map defined in (\ref{Psi})
\begin{equation}\label{Psip}\Psi:\mext^1(\mm^a_{\bullet}(d,\chi),K_S)^*\dashrightarrow \bh^0(\mh^{n}(d))^{*},\end{equation}
is surjective.

Denote by $\widehat{K}_m$ the subgroup (with the group structure given by the addition) of $\widehat{K}(Var_{\bc})$ consisting of $[\mm]$ such that $\dim\mm\leq m$.  For $[\ma],[\mb]\in\widehat{K}(Var_{\bc})$, we write $[\ma]\equiv[\mb]~~(\widehat{K}_m)$ if $[\ma]-[\mb]\in \widehat{K}_m$.

By Proposition \ref{dnkip} (3) and (4) we have 
 \begin{equation}\label{modh1}[\mh^{n,0}_L]\equiv [\mh^n]~~(\widehat{K}_{2n-1-(2d-3)}),\end{equation}
 \begin{equation}\label{modh2}[\mh^{n,0,0}_L]\equiv [\mh^n]~~(\widehat{K}_{2n-1-d}),\end{equation}
and
\begin{equation}\label{modh3}[\mh^{n,0,0}_L]\equiv [\mh^n]-[\mh_L^{n,0,1}]~~(\widehat{K}_{2n-1-(d+1)}).\end{equation}
Moreover by definition for any $I_n\in \mh^{n,0,k}_L$, we have $h^0(I_n(d-3))=0$ and $h^0(I_n(d))=\chi(I_n(d))+k=2d+k$, hence we have
\begin{eqnarray}\label{modhh}[\bh^0(\mh^{n,0,0}_L(d))^*]&=&(\bl^{2d}-1)\cdot[\mh^{n,0,0}_L(d)]\nonumber\\
&\equiv&(\bl^{2d}-1)\cdot([\mh^n]-[\mh_L^{n,0,1}])~~(\widehat{K}_{2n+d-2})\nonumber\\
&\equiv&\bl^{2d}\cdot([\mh^n]-[\mh_L^{n,0,1}])~~(\widehat{K}_{d^2})
\end{eqnarray}
The last equality in (\ref{modhh}) is because $2n-1<2n+d-2=d^2$.

Since $\Psi$ in (\ref{Psip}) is surjective, define $\bu:=\Psi^{-1}(\bh^0(\mh^{n,0,0}_L(d))^*)$.  Define the projection $\phi:\mext^1(\mm^a_{\bullet}(d,\chi),K_S)^*\ra\mm^a_{\bullet}(d,\chi)$.  For every $\mf\in\phi(\bu)$, there is an exact sequence
\[0\ra K_S\ra I_n(d-3)\ra\mf\ra0,\]
with $h^0(I_n(d-3))=h^1(I_n(d))=0$.  Hence for any $\mf\in\phi(\bu)$ we have $h^0(\mf)=h^1(\mf(3))=0$.  Therefore $\dim\Ext^1(\mf,K_S)=h^1(\mf)=-\chi=d+1,\forall \mf\in\phi(\bu)$ and we have
\begin{equation}\label{phied}[\phi^{-1}(\phi(\bu))]=(\bl^{d+1}-1)[\phi(\bu)].\end{equation}

\begin{lemma}\label{phiu}We have
$$\mn(d,\chi)\setminus \left(\bigcup _{j\leq 0}\mv_{l,j}\cup\bigcup_{i\geq 1}\mn_{k,i}\right)\subset\phi(\bu)\subset \mm^a_{\bullet}(d,\chi)\setminus \left(\bigcup _{j\leq 0}\mw^{a}_{l,j}\cup\bigcup_{i\geq 1}\mm^a_{k,i}\right).$$ \end{lemma}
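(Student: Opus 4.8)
The plan is to reduce Lemma \ref{phiu} to a single observation: for a pure $1$-dimensional sheaf $\mf$ on $\p^2$ with the invariants at hand, membership in the excluded loci is governed entirely by the two vanishings $h^0(\mf)=0$ and $h^1(\mf(-K_S))=0$, and these two conditions transport across $\Psi$ via the identities $(\ref{mhc})$. So the first step is to record this dictionary. In one direction, Remark \ref{mwa} applied with $j_0=0$ gives $h^0(\mf)\neq0\Rightarrow\mf\in\bigcup_{j\leq0}\mw^a_{l,j}(d,\chi)$, and applied with $i_0=1$ it gives $h^1(\mf(-K_S))\neq0\Rightarrow\mf\in\bigcup_{i\geq1}\mm^a_{k,i}(d,\chi)$. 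For the reverse direction I would use two elementary monotonicity facts on $\p^2$: multiplication of $\mf$ by a general form of degree $m\geq0$ is an injection $\mf(-mH)\hookrightarrow\mf$ with $0$-dimensional cokernel, so $h^0(\mf(-mH))\leq h^0(\mf)$ and $h^1(\mf(mH))\leq h^1(\mf)$; since $-jK_S=3jH$ with $3j\leq0$ for $j\leq0$, and $-iK_S=-K_S+3(i-1)H$ with $3(i-1)\geq0$ for $i\geq1$, this yields $h^0(\mf)=0\Rightarrow\mf\notin\mw^a_{l,j}$ for all $j\leq0$ and $h^1(\mf(-K_S))=0\Rightarrow\mf\notin\mm^a_{k,i}$ for all $i\geq1$. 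Since $\mv_{l,j}=\mn\cap\mw^a_{l,j}$ and $\mn_{k,i}=\mn\cap\mm^a_{k,i}$, for $\mf\in\mn$ the same dictionary relates $h^0(\mf)$ and $h^1(\mf(-K_S))$ to the loci $\mv_{l,j}$ and $\mn_{k,i}$.

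For the right inclusion I would take $\eta\in\bu$ and set $\mf=\phi(\eta)$. Since $\bu$ lies in the domain of $\Psi$, $\eta$ is a non-split extension $0\ra K_S\ra\widetilde I\ra\mf\ra0$ with $\widetilde I$ torsion-free, hence $\widetilde I\cong I_n(L+K_S)$ with $I_n\in\mh^n$, and $\Psi(\eta)\in\bh^0(\mh^{n,0,0}_L(d))^*$ says precisely $I_n\in\mh^{n,0,0}_L$, i.e. $h^0(I_n(L+K_S))=0$ and $h^1(I_n(L))=0$. As $\widetilde I(-K_S)=I_n(L)$, the identities $(\ref{mhc})$ give $h^0(\mf)=h^0(\widetilde I)=0$ and $h^1(\mf(-K_S))=h^1(\widetilde I(-K_S))=0$; by the dictionary $\mf$ then lies outside $\bigcup_{j\leq0}\mw^a_{l,j}$ and outside $\bigcup_{i\geq1}\mm^a_{k,i}$, which is the asserted right inclusion.

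For the left inclusion I would start with $\mf\in\mn(d,\chi)$ avoiding $\bigcup_{j\leq0}\mv_{l,j}$ and $\bigcup_{i\geq1}\mn_{k,i}$; since $\mf\in\mn$ this means $\mf$ avoids $\bigcup_{j\leq0}\mw^a_{l,j}$ and $\bigcup_{i\geq1}\mm^a_{k,i}$, so the Remark \ref{mwa} direction of the dictionary forces $h^0(\mf)=0$ and $h^1(\mf(-K_S))=0$. Because $\chi<0$ and $h^0(\mf)=0$, we have $h^1(\mf)=-\chi>0$, so $\Ext^1(\mf,K_S)\cong H^1(\mf)^{\vee}\neq0$ and I may pick a non-split $\eta=[0\ra K_S\ra\widetilde I\ra\mf\ra0]$. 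Since $\mf\in\mn$, the remark following $(\ref{mhc})$ makes $\widetilde I$ torsion-free, hence $\widetilde I\cong I_n(L+K_S)$ with $I_n\in\mh^n$; then $(\ref{mhc})$ forces $h^0(I_n(L+K_S))=h^0(\mf)=0$ and $h^1(I_n(L))=h^1(\mf(-K_S))=0$, i.e. $I_n\in\mh^{n,0,0}_L$. Thus $\Psi(\eta)\in\bh^0(\mh^{n,0,0}_L(d))^*$, so $\eta\in\bu$ and $\mf=\phi(\eta)\in\phi(\bu)$.

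I do not anticipate a genuine obstacle: the lemma is essentially bookkeeping of the correspondence $\Psi$ between $\mext^1(\cdots,K_S)^*$ and $\bh^0(\cdots(d))^*$ set up in $(\ref{Psi})$ and $(\ref{Psip})$, combined with $(\ref{mhc})$. The one step that is not a direct quotation of earlier results is the "only if" half of the dictionary — that $h^0(\mf)=0$ and $h^1(\mf(-K_S))=0$ genuinely exclude the relevant $\mw^a_{l,j}$ and $\mm^a_{k,i}$ — and this is exactly where the twisting monotonicity on $\p^2$ above is used.
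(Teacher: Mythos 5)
Your proof is correct and follows essentially the same route as the paper: both directions are handled by transporting the two vanishings $h^0(\mf)=0$ and $h^1(\mf(-K_S))=0$ across the extension $0\ra K_S\ra \widetilde{I}\ra\mf\ra 0$ via (\ref{mhc}), together with the torsion-freeness criterion for $\mf\in\mn(d,\chi)$. The only addition is that you spell out the twisting monotonicity on $\p^2$ needed for the implication $h^0(\mf)=0,\ h^1(\mf(-K_S))=0\Rightarrow\mf\notin\bigcup_{j\leq 0}\mw^a_{l,j}\cup\bigcup_{i\geq1}\mm^a_{k,i}$, a step the paper leaves implicit in its citation of Remark \ref{mwa}; this is a legitimate and useful clarification rather than a different argument.
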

\begin{proof}Since $h^0(\mf)=h^1(\mf(3))=0,~\forall\mf\in\phi(\bu)$, by Remark \ref{mwa} we have
$$\phi(\bu)\subset \mm^a_{\bullet}(d,\chi)\setminus \left(\bigcup _{j\leq 0}\mw^{a}_{l,j}\cup\bigcup_{i\geq 1}\mm^a_{k,i}\right).$$

For every $\mf \in\mm^a_{\bullet}(d,\chi)\setminus \left(\bigcup _{j\leq 0}\mw^{a}_{l,j}\cup\bigcup_{i\geq 1}\mm^a_{k,i}\right)$, $\mf\in\phi(\bu)$ iff there exists a torison-free extension of $\mf$ by $K_S\cong\mo_{\p^2}(-3)$.  Hence 
$$\mn(d,\chi)\setminus \left(\bigcup _{j\leq 0}\mv_{l,j}\cup\bigcup_{i\geq 1}\mn_{k,i}\right)\subset\phi(\bu).$$
The lemma is proved.
\end{proof}

Recall that $\mr^a(L_1,L_2;\chi_1)(\chi)$ is the substack of $\ts^a(L_1+L_2,\chi)$ consisting of sheaves $\mf$ lying in sequences of the following form
\[0\ra\mf_1\ra\mf\ra\mf_2\ra0,\]
where $\mf_1\in\mn(L_1,\chi_1),\mf_2\in\mn(L_2,\chi-\chi_1)$ and supp$(\mf_1)\neq\text{supp}(\mf_2)$.  Also $\mr^a(L_2,L_1)(\chi)=\mr^a(L_1,L_2)(\chi)=\displaystyle{\bigcup_{\chi-L_1.L_2-a\leq \chi_1\leq a}}\mr^a(L_1,L_2;\chi_1)(\chi)$

Define $\wmn^a(d,\chi):=\mn(d,\chi)\cup\mr^a(1,d-1)(\chi),~~\wmn(d,\chi):=\wmn^a(d,\chi)\cap\mm(d,\chi)$, $\wmn_{k,i}^a(d,\chi):=\wmn^a(d,\chi)\cap\mm^a_{k,i}$ and finally $\wmv_{l,j}^a(d,\chi):=\wmn^a(d,\chi)\cap\mw^a_{l,j}$.   

Denote by $\widetilde{\ls^{int}}$ the image of $\wmn^a(d,\chi)$ via the Hilbert-Chow morphism $\pi^a$.  By a direct computation we have $\dim \ls-\dim(\ls\setminus\widetilde{\ls^{int}})=2(d-2)$.  Therefore by \cite[Corollary 1.3]{Yuan9}, we have 
\begin{equation}\label{codtn}\dim\left(\mm^a_{\bullet}(d,\chi)\setminus\wmn^a(d,\chi)\right)\leq d^2-2(d-2)\leq d^2-(d+1),\end{equation} 
and hence $\dim\left(\mm(d,\chi)\setminus\wmn(d,\chi)\right)\leq d^2-(d+1).$

Define $$\um:=\mm(d,\chi)\cup\mr^a(1,d-1;-1)(\chi)\cup\mr^a(d-1,1;\chi+2)(\chi)\cup\mr^a(d-1,1;\chi+3)(\chi).$$  
This is a disjoint union because every sheaf in $\mr^a(1,d-1;-1)(\chi)$ has $\mo_H(-2)$ as a subsheaf and every sheaf in $\mr^a(d-1,1;\chi+2)(\chi)$ ($\mr^a(d-1,1;\chi+3)(\chi)$, resp.) has $\mo_H(-3)$ ($\mo_H(-4)$, resp.) as a quotient.  Here $\mo_H(s)$ stands for $\mo_C\otimes\mo_{\p^2}(s)$ with $\p^1\cong C\in |H|$.  
\begin{lemma}\label{tired}$\phi(\bu)\cap\wmn^a(d,\chi)\subset\um$.
\end{lemma}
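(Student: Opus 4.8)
The plan is to take an arbitrary $\mf \in \phi(\bu)\cap\wmn^a(d,\chi)$ and show it lies in one of the four pieces of $\um$. Since $\wmn^a(d,\chi) = \mn(d,\chi)\cup\mr^a(1,d-1)(\chi)$, there are two cases. If $\mf\in\mn(d,\chi)$, then $\mf$ has integral support and, being in $\phi(\bu)$, it lies outside every $\bigcup_{j\le 0}\mv_{l,j}$ and $\bigcup_{i\ge 1}\mn_{k,i}$ by Lemma \ref{phiu}; I would then invoke stability for sheaves on integral curves together with the normalizations $\chi=-d-1$ and $\rho_L=d-1$ to conclude $\mf$ is stable, hence $\mf\in\mm(d,\chi)\subset\um$. (The point is that for a sheaf on an integral curve, failure of stability forces a sub- or quotient sheaf with extreme Euler characteristic, which in turn forces $\mf$ into one of the excluded $\mv_{l,j}$ or $\mn_{k,i}$ strata; this is where the $h^0/h^1$-vanishing from membership in $\phi(\bu)$ does the work.)

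The substantive case is $\mf\in\mr^a(1,d-1;\chi_1)(\chi)$ for some $\chi_1$, so $\mf$ sits in $0\to\mf_1\to\mf\to\mf_2\to 0$ with $\mf_1\in\mn(H,\chi_1)$, $\mf_2\in\mn((d-1)H,\chi-\chi_1)$, and $\mo_H(\chi_1-1)\cong\mf_1\hookrightarrow\mf$ since $\mf_1$ is a line bundle on $\p^1$. By Lemma \ref{phiu}, $\mf\in\mm^a_\bullet(d,\chi)\setminus(\bigcup_{j\le 0}\mw^a_{l,j}\cup\bigcup_{i\ge 1}\mm^a_{k,i})$, i.e. $h^0(\mf)=0$ and $h^1(\mf(-K_S))=h^1(\mf(3))=0$. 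The first constraint, applied via the inclusion $\mo_H(\chi_1-1)\hookrightarrow\mf$, gives $h^0(\mo_H(\chi_1-1))=0$, i.e. $\chi_1-1\le -1$, so $\chi_1\le 0$. I would push on both sides: the vanishing $h^1(\mf(3))=0$ bounds $\chi_1$ from below, and I also need to use the dual presentation $0\to\mf_2'\to\mf\to\mf_1'\to 0$ from Lemma \ref{tfcr}, where $\mf_1'$ is a quotient line bundle $\mo_H(\chi(\mf_1')-1)$ with $\chi_1\le\chi(\mf_1')\le\chi_1+L_1.L_2=\chi_1+(d-1)$. The condition $\mf\in\phi(\bu)$ forces a torsion-free extension of $\mf$ by $K_S$ to exist, which by Lemma \ref{tfcr}(2) means the extension avoids $\Ext^1(K_S,\mf_2)\cup\Ext^1(K_S,\mf_1')$; combined with the $h^0$- and $h^1$-vanishing this should pin $\chi(\mf_1')$ to the three values giving $\mf_1'\cong\mo_H(-3)$ or the analog giving $\mf_1\cong\mo_H(-2)$ — precisely $\chi_1=-1$ (landing in $\mr(1,d-1;-1)(\chi)$) or $\chi(\mf_1')\in\{-2,-3\}$ (landing in $\mr(d-1,1;\chi+2)(\chi)$ or $\mr(d-1,1;\chi+3)(\chi)$).

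The main obstacle I anticipate is the bookkeeping in the second case: extracting exactly the three admissible values of $\chi_1$ (equivalently $\chi(\mf_1')$) from the combination of $h^0(\mf)=0$, $h^1(\mf(3))=0$, the torsion-freeness criterion of Lemma \ref{tfcr}(2), and the range $\chi_1\le\chi(\mf_1')\le\chi_1+d-1$. One has to rule out, say, $\chi_1\le -2$ (which would make $\mo_H(-2)$ fail to embed but still allow $\mo_H(\le-3)$ — here the quotient side and $h^1(\mf(3))=0$ must intervene) and the degenerate situation where $\mf$ decomposes, using Lemma \ref{auto}. I would organize this as a short case analysis on $\chi(\mf_1')\in\{\chi_1,\dots,\chi_1+d-1\}$, showing that all but the boundary cases either violate a vanishing or force $\mf$ into the low-dimensional locus already excluded. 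Once the three surviving cases are identified, membership in the corresponding $\mr(\cdot,\cdot;\cdot)(\chi)$ is immediate from the definitions, completing the proof.
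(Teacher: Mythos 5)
Your overall frame (splitting $\wmn^a(d,\chi)$ into $\mn(d,\chi)$ and $\mr^a(1,d-1)(\chi)$, extracting $h^0(\mf)=0$ and $h^1(\mf(-K_S))=0$ from Lemma \ref{phiu}, and playing the two presentations of $\mf$ with a line-bundle sub and quotient on the line against each other) is the right skeleton, but the execution in the reducible-support case has a genuine gap. You try to pin $\chi_1$, resp.\ $\chi(\mf_1')$, down to the three boundary values for \emph{every} $\mf\in\phi(\bu)\cap\mr^a(1,d-1)(\chi)$. That cannot work: the three exceptional families are made of strictly unstable sheaves (they carry the destabilizing sub $\mo_H(-2)$, resp.\ the destabilizing quotients $\mo_H(-3)$, $\mo_H(-4)$), while a typical member of $\phi(\bu)\cap\mr^a(1,d-1)(\chi)$ is \emph{stable}, satisfies none of your three numerical conditions, and belongs to $\um$ only through the component $\mm(d,\chi)$ --- which your case analysis never invokes for reducible supports. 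The missing idea is the dichotomy the paper uses: if $\mf$ is stable there is nothing to prove; only when $\mf$ is not stable do you get the extra inequality ($t\geq-2$ for a destabilizing sub $\mo_H(t)$, or $s\leq-3$ for a destabilizing quotient $\mo_H(s)$), and it is this inequality combined with the $\phi(\bu)$-constraints that forces $t=-2$ or $s\in\{-3,-4\}$, i.e.\ membership in $\mr(1,d-1;-1)(\chi)$, $\mr(d-1,1;\chi+2)(\chi)$ or $\mr(d-1,1;\chi+3)(\chi)$. Your phrase ``force $\mf$ into the low-dimensional locus already excluded'' has no content here: the lemma is an honest inclusion of sets, not an identity modulo lower-dimensional strata.

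Two further corrections of direction. First, $h^1(\mf(-K_S))=0$ does not bound the sub-side $\chi_1$ from below as you claim: since $H^2$ of a $1$-dimensional sheaf vanishes, $H^1$-vanishing passes to quotients, so it controls the \emph{quotient} line bundle, giving $h^1(\mo_H(s+3))=0$, i.e.\ $s\geq-4$; dually $h^0(\mf)=0$ controls the sub, and the existence of a torsion-free extension of $\mf$ by $K_S$ (Lemma \ref{tfcr}) forces $h^1(\mf)\neq h^1(\mg_2)$, hence $h^1(\mo_H(t))\neq0$, i.e.\ $t\leq-2$ --- this is the correct division of labor between the three constraints. Second, in the integral-support case no $h^0/h^1$ input is needed at all: a pure sheaf with integral Fitting support in $|dH|$ is a rank-one torsion-free sheaf on an integral curve and is automatically stable, so $\mn(d,\chi)\subset\mm(d,\chi)\subset\um$ directly; the mechanism you sketch there (ruling out extreme sub/quotient Euler characteristics via the vanishings) is not what makes that case work.
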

\begin{proof}We only need to show $\phi(\bu)\cap\mr^a(1,d-1)(\chi)\subset \um$.

Let $\mf\in\phi(\bu)\cap\mr^a(1,d-1)(\chi)$ and assume $\mf$ is not stable.  Then
$\mf$ lies in either of the following two exact sequences
\begin{equation}\label{gex1}0\ra\mg_1\ra\mf\ra\mo_{H}(s')\ra0,\end{equation}
\begin{equation}\label{gex2}0\ra\mo_H(t')\ra\mf\ra\mg_2\ra0,\end{equation}
where $s'\leq -3$ and $t'\geq -2$.  

On the other hand for every $\mf\in\phi(\bu)$ we have $h^1(\mf(-K_S))=0$ which implies $H^1(\mo_H(s'+3))=0\Leftrightarrow s'\geq -4$.  We also have that $h^1(\mf)\neq h^1(\mg_2)$ since otherwise every extension of $\mf$ by $K_S$ splits along $\mo_H(t')$ and $\mf\not\in \phi(\bu)$.  This implies $H^1(\mo_H(t'))\neq0\Leftrightarrow t'\leq -2$.  Hence the lemma.
\end{proof}
Let $\ms_1(s)$ ($\mt_1(t)$, resp.)$\subset\mr^a(d-1,1;\chi-s-1)(\chi)$ ($\mr^a(1,d-1;t+1)(\chi)$, resp.) consist of all sheaves $\mf$ such that $h^0(\mf)\neq 0$. 

Let $\ms_2(s)$ ($\mt_2(t)$, resp.)$\subset\mr^a(d-1,1;\chi-s-1)(\chi)$ ($\mr^a(1,d-1;t+1)(\chi)$, resp.) consist of all sheaves $\mf$ such that $h^1(\mf(-K_S))\neq 0$.  

Let $\ms_3(s)$ ($\mt_3(t)$, resp.)$\subset\mr^a(d-1,1;\chi-s-1)(\chi)$ ($\mr^a(1,d-1;t+1)(\chi)$, resp.) consist of all sheaves $\mf$ such that there is no torison free extension of $\mf$ by $K_S$.  

Then we have 
\begin{equation}\label{rest}\mr^a(d-1,1;\chi-s-1)(\chi)\setminus\left(\bigcup_{i=1}^3\ms_i(s)\right)\subset\phi(\bu)\end{equation} and 
\begin{equation}\label{lest}\mr^a(1,d-1;t+1)(\chi)\setminus\left(\bigcup_{i=1}^3\mt_i(t)\right)\subset\phi(\bu).\end{equation}
Since $R^a(L_1,L_2)(\chi)=R^a(L_2,L_1)(\chi)$, we have for $i=1,2,3$
\begin{equation}\label{seqt}\bigcup_{s}\ms_i(s)=\bigcup_t\mt_i(t).
\end{equation}
By Lemma \ref{tfcr} (1), we have 
\begin{equation}\label{inept}\ms_i(s)\cap\mt_i(t)=\emptyset,\text{ if }t>s\text{ or }t<s-d+1.\end{equation}
Also one can easily see the following three properties  
\[\left\{\begin{array}{ll}\ms_2(s)=\mr^a(d-1,1;\chi-s-1)(\chi),&\text{ for } s\leq-5\\\mt_1(t)=\mr^a(1,d-1;t+1)(\chi),&\text{ for }t\geq 0 \\ \mt_3(t)=\mr^a(1,d-1;t+1)(\chi),&\text{ for }t\geq -1\end{array}\right..\]

\begin{lemma}\label{stlr}(1) For $s,t\leq -1$, we have
$$\dim\ms_1(s)\leq d^2-2(d+1)-s,~~\dim\mt_1(t)\leq d^2-2(d+1)-t.$$  

(2) For $s,t\geq -4$, we have 
$$\dim\ms_2(s)\leq  d^2-(3d-5)+s, ~~\dim\mt_2(t)\leq  d^2-(3d-5)+t.$$

(3) For $-d-1\leq s\leq -2$,   $\ms_3(s)=\emptyset$.

(4) For $s\geq -1,t\leq-2$, $\dim(\ms_3(s)\cap\mt_3(t))\leq  d^2-2d$.  

\end{lemma}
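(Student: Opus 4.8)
The plan is to bound each of these loci through the degree-$(d-1)$ component of $\mf$, using Proposition~\ref{dnki2}(1) for that component and Proposition~\ref{clor} to absorb the remaining ``line'' and ``extension'' directions. Recall that every $\mf\in\mr^a(d-1,1;\chi-s-1)(\chi)$ sits in $0\to\mf_1\to\mf\to\mf_2\to0$ with $\mf_1\in\mn((d-1)H,-d-s-2)$ and $\mf_2\cong\mo_H(s)$, and every $\mf\in\mr^a(1,d-1;t+1)(\chi)$ sits in $0\to\mf_2'\to\mf\to\mf_1'\to0$ with $\mf_2'\cong\mo_H(t)$ and $\mf_1'\in\mn((d-1)H,-d-t-2)$. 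Since $\dim\mn(H,\cdot)=1$ and $(d-1)H.H=d-1$, Proposition~\ref{clor} shows that if $Z\subset\mn((d-1)H,\cdot)$ is a locally closed substack, then the sheaves of the relevant $\mr^a$ whose $(d-1)H$-component lies in $Z$ form a substack of dimension $\leq\dim Z+d$.

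For part~(1): when $s\leq-1$ we have $h^0(\mo_H(s))=0$, so a nonzero section of $\mf$ induces a nonzero section of the component $\mf_1\in\mn((d-1)H,-d-s-2)$. By Remark~\ref{mwa} such $\mf_1$ lie in $\mv_{l,0}\cup\bigcup_{j<0}\mv_{l,j}$ with $l\geq1$, and Proposition~\ref{dnki2}(1) bounds this locus by $(d-1)^2+(-d-s-2)-1=d^2-3d-2-s$ (the $j=0$ piece is the largest). Adding $d$ gives $\dim\ms_1(s)\leq d^2-2(d+1)-s$, and the bound for $\mt_1(t)$ is the same estimate with the line now a subsheaf and the section inducing a nonzero section of the quotient $\mf_1'\in\mn((d-1)H,-d-t-2)$. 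Part~(2) is identical after twisting by $-K_S$: for $s\geq-4$ we have $h^1(\mo_H(s+3))=0$, so $h^1(\mf(-K_S))\neq0$ forces $h^1(\mf_1(-K_S))\neq0$; by Remark~\ref{mwa} such $\mf_1$ lie in $\mn_{k,1}\cup\bigcup_{i>1}\mn_{k,i}$ with $k\geq1$, Proposition~\ref{dnki2}(1) (the $i=1$ piece dominating) bounds this locus by $(d-1)^2+(d+s+2)-3(d-1)-1=d^2-4d+5+s$, and adding $d$ yields $\dim\ms_2(s)\leq d^2-(3d-5)+s$; likewise for $\mt_2(t)$.

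For part~(3): by Lemma~\ref{tfcr}(2)--(3) a sheaf $\mf$ supported on two integral curves admits \emph{no} torsion-free extension by $K_S$ exactly when $\Ext^1(K_S,\mf)$ equals the union of its two subspaces $\Ext^1(K_S,\mf_2)$ and $\Ext^1(K_S,\mf_1')$; as these meet only in $0$, this forces $h^1(\mf)=h^1(\mf_2)$ or $h^1(\mf)=h^1(\mf_1')$. Suppose $-d-1\leq s\leq-2$ and $\mf\in\mr^a(d-1,1;\chi-s-1)(\chi)$. The first possibility fails because $h^0(\mf_2)=h^0(\mo_H(s))=0$ makes $h^1(\mf)=h^1(\mf_1)+h^1(\mf_2)$, while $\chi(\mf_1)=-d-s-2\leq-1$ forces $h^1(\mf_1)>0$; the second fails because $h^1(\mf_2)=h^1(\mo_H(s))\geq1$ and Lemma~\ref{tfcr}(3) give $h^1(\mf_1')\leq h^1(\mf)-h^1(\mf_2)<h^1(\mf)$. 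Hence $\ms_3(s)=\emptyset$. Now suppose $s\geq-1$, $t\leq-2$ and $\mf\in\ms_3(s)\cap\mt_3(t)$, so that $\mf$ lies in both $\mr^a$'s (if the numerical constraint $s-d+1\leq t\leq s$ of Lemma~\ref{tfcr}(1) fails, the intersection is empty). Here $h^1(\mf_2)=h^1(\mo_H(s))=0$, so necessarily $h^1(\mf)=h^1(\mf_1')$, and since $h^1(\mf_2')=h^1(\mo_H(t))=-t-1\geq1$ the sequence $0\to\mf_2'\to\mf\to\mf_1'\to0$ forces $H^0(\mf_1')\to H^1(\mf_2')$ to be surjective, so $h^0(\mf_1')\geq-t-1$. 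By Remark~\ref{mwa} and Proposition~\ref{dnki2}(1) the locus of such $\mf_1'$ in $\mn((d-1)H,-d-t-2)$ has dimension $\leq(d-1)^2+(-d-t-2)-(-t-1)=d^2-3d$ (again the $j=0$ piece dominates, and the $t$-dependence cancels). Adding $d$ gives $\dim(\ms_3(s)\cap\mt_3(t))\leq d^2-2d$.

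Parts~(1) and~(2) are routine once the reduction to the $(d-1)H$-component is made. The main work is in part~(3): one has to keep straight the two presentations of a sheaf supported on two integral curves, convert ``no torsion-free extension by $K_S$'' into the dichotomy $h^1(\mf)\in\{h^1(\mf_2),h^1(\mf_1')\}$ via Lemma~\ref{tfcr}, and then use the cancellation that makes the final estimate independent of $t$; I expect that last step to be the main obstacle.
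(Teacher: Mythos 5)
Your argument is correct, and for parts (1), (2) and the emptiness statement in (3) it is essentially the paper's proof: reduce to the degree-$(d-1)$ component via the two presentations of $\mf$, apply Proposition \ref{dnki2}(1) to that component (with the $j=0$, resp.\ $i=1$, stratum dominating), and add $d=L_1.L_2+\dim\mn(H,\cdot)$ through the relative form of Proposition \ref{clor}; the emptiness of $\ms_3(s)$ for $-d-1\leq s\leq -2$ is obtained from the same dichotomy coming from Lemma \ref{tfcr}(2)--(3). The one place you genuinely diverge is the bound on $\ms_3(s)\cap\mt_3(t)$: the paper turns the non-existence of torsion-free extensions into $h^1(\mg_2)=h^1(\mf)\geq d+1$ and then must split the locus of such $\mg_2$ into $\bigcup_{k\geq d+1}\mn_{k,0}$, handled by Proposition \ref{dnki2}(2), and $\bigcup_{i\geq1}\mn_{k,i}$, handled by (1); you instead convert the same condition (via the vanishing of $H^1(\mo_H(t))\to H^1(\mf)$, or equivalently Riemann--Roch) into $h^0(\mf_1')\geq -t-1$ and bound the locus $\mv_{l,0}$ with $l\geq -t-1$ by $(d-1)^2+(-d-t-2)-l\leq d^2-3d$ using Proposition \ref{dnki2}(1) only, the $t$-dependence cancelling exactly as you say. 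Both computations give $d^2-2d$ after adding $d$; your variant is marginally more economical in that it avoids Proposition \ref{dnki2}(2) altogether (note that applying (1) directly to the strata $\mn_{k,0}$, $k\geq d+1$, would only give $d^2-d$, which is why the paper needs (2) on its route), while the paper's version records the strength of the constraint through $h^1$ rather than $h^0$.
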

\begin{proof}Let $\mf\in\mr^a(d-1,1;\chi-s-1)(\chi)$.  We have an exact sequence
as follows.
\begin{equation}\label{fex1}0\ra\mg_1\ra\mf\ra\mo_{H}(s)\ra0.\end{equation}
$\mf$ also lies in the following sequence
\begin{equation}\label{fex2}0\ra\mo_H(t')\ra\mf\ra\mg_2\ra0,\end{equation}
where $1-d+s\leq t'\leq s$ by Lemma \ref{tfcr} (1).

If $s\leq -1$, then $h^0(\mf)\neq 0\Leftrightarrow h^0(\mg_1)\neq 0\Leftrightarrow \mg_1\in\bigcup_{j\leq 0}\mv_{l,j}(d-1,\chi-s-1)$. 
By Proposition \ref{dnkip} (1), we have 
$$\dim\bigcup_{j\leq 0}\mv_{l,j}(d-1,\chi-s-1)\leq (d-1)^2-1+\chi-s-1,$$
Hence by Proposition \ref{clor}, we have $\dim\ms_1(s)\leq d^2-(d+1)+\chi-s=d^2-2(d+1)-s$ since $\chi=-d-1$.

The statement for $\mt_1(t),t\leq -1$ can be proved analogously.  Hence (1) is proved.

Now we look at $\ms_2(s)$.  
If $s\geq -4$, in (\ref{fex1}) we have $h^1(\mf(-K_S))\neq 0\Rightarrow h^1(\mg_1(-K_S))\neq 0\Leftrightarrow \mg_1\in \bigcup_{i\geq 1}\mn_{k,i}(d-1,\chi-s-1)$.

By Proposition \ref{dnkip} (2), we have
\[\dim \bigcup_{i\geq 1}\mn_{k,i}(d-1,\chi-s-1)\leq (d-1)^2-3(d-1)-1-\chi+s+1,\]
Hence by Proposition \ref{clor} we have $\dim\ms_2(s)\leq  d^2-4(d-1)-\chi+s=d^2-(3d-5)+s$.

The statement for $\mt_1(t),t\geq -4$ can be proved analogously.  Hence (2) is proved.

Let $\mf\in\ms_3(s)$.  By Lemma \ref{tfcr} (2) (3), we have either $ h^1(\mo_{H}(s))=h^1(\mf)\geq -\chi=d+1$ or $h^1(\mg_2)=h^1(\mf)\Rightarrow h^1(\mo_H(s))=0$.  Hence for $-d-1\leq s\leq -2$,   $\ms_3(s)=\emptyset$.  (3) is proved.

Let $\mf\in \ms_3(s)\cap\mt_3(t')$ with $s\geq -1,t'\leq -2$.  Then by (\ref{inept}) we have $t'\geq s-d+1\geq -d$.  Since $h^1(\mo_H(s))=0$, by Lemma \ref{tfcr} (1) in (\ref{fex2}) we have $h^1(\mg_2)=h^1(\mf)\geq -\chi= d+1$.
Hence $$\mg_2\in\bigcup_{k\geq d+1}\mn_{k,0}(d-1,\chi-t-1)\cup\bigcup_{i\geq 1}\mn_{k,i}(d-1,\chi-t-1).$$

Since $-d\leq t'\leq-2\Leftrightarrow -d\leq \chi-t'-1\leq -2$, by Proposition \ref{dnki2} (3) for $-(d+1)<\chi-t'-1$, we have
\begin{eqnarray}&&\dim \bigcup_{k\geq d+1}\mn_{k,0}(d-1,\chi-t'-1)\nonumber\\&\leq& (d-1)^2+\max\{-(d+1),\chi-t'-1-3(d-1)\}=(d-1)^2-(d+1).\nonumber\end{eqnarray}

By Proposition \ref{dnkip} (2) we have
\[\dim\bigcup_{i\geq 1}\mn_{k,i}(d-1,\chi-t'-1)\leq (d-1)^2-3(d-1)-\chi+t'+1-1\leq (d-1)^2-(2d-2).\]
Hence by Proposition \ref{clor} we have $\dim(\ms_3(s)\cap \mt_3(t'))\leq  d^2-2d$ for $s\geq-1,t'\leq-2$.  Hence we proved (4). 
\end{proof}

\begin{prop}\label{msbu}In $\widehat{K}(Var_{\bc})$ we have
\[[\phi(\bu)]\equiv[\um]~~(\widehat{K}_{d^2-(d+1)}).\]
\end{prop}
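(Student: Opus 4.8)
The plan is to show both inclusions hold up to subsets of dimension $\leq d^2-(d+1)$, i.e.\ to bound the symmetric difference $\phi(\bu)\triangle\um$. We start from the chain of inclusions already established. By Lemma \ref{tired}, $\phi(\bu)\cap\wmn^a(d,\chi)\subset\um$, while by \eqref{codtn} the complement $\mm^a_{\bullet}(d,\chi)\setminus\wmn^a(d,\chi)$ has dimension $\leq d^2-(d+1)$. Since $\phi(\bu)\subset\mm^a_{\bullet}(d,\chi)$, this already gives $[\phi(\bu)]\equiv[\phi(\bu)\cap\wmn^a(d,\chi)]~~(\widehat{K}_{d^2-(d+1)})$ and hence $\phi(\bu)\subset\um$ modulo $\widehat{K}_{d^2-(d+1)}$. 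So the content is the reverse: showing $\um\setminus\phi(\bu)$ has dimension $\leq d^2-(d+1)$.

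For the reverse inclusion I would decompose $\um$ according to its defining disjoint union: $\mm(d,\chi)$, $\mr(1,d-1;-1)(\chi)$, $\mr(d-1,1;\chi+2)(\chi)$, and $\mr(d-1,1;\chi+3)(\chi)$. On the stable locus $\mm(d,\chi)$, the part missing from $\phi(\bu)$ is contained in $\bigcup_{j\leq0}\mv_{l,j}\cup\bigcup_{i\geq1}\mn_{k,i}$ by Lemma \ref{phiu}, and Proposition \ref{dnkip}(1) bounds each such piece: for $j\leq0$ one gets $\dim\mv_{l,j}(d,\chi)\leq d^2-(d+1-3jd)-l\leq d^2-(d+1)$, and similarly for $\mn_{k,i}$ with $i\geq1$ one gets $\dim\mn_{k,i}(d,\chi)\leq d^2-(3di-d-1)-k\leq d^2-(2d-1)$. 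For the three $\mr$-components, translating through the identification with the strata $\ms_i(s),\mt_i(t)$ of Lemma \ref{stlr}: a sheaf in $\mr(d-1,1;\chi-s-1)(\chi)$ with $s\in\{-3,-4\}$ (note $\chi+2=\chi-s-1$ forces $s=-3$ and $\chi+3$ forces $s=-4$) fails to lie in $\phi(\bu)$ only if it lies in $\ms_1(s)\cup\ms_2(s)\cup\ms_3(s)$ by \eqref{rest}; Lemma \ref{stlr}(3) gives $\ms_3(s)=\emptyset$ for these $s$, and parts (1),(2) give $\dim\ms_1(s)\leq d^2-2(d+1)-s\leq d^2-(d+1)$ (since $s\geq-4$ and $d\geq5$, actually need to check $-s\leq d-1$, true) and $\dim\ms_2(s)\leq d^2-(3d-5)+s\leq d^2-(d+1)$. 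The same for $\mr(1,d-1;-1)(\chi)=\mr(1,d-1;t+1)(\chi)$ with $t=-2$, using $\mt_i(t)$ and \eqref{lest}: $\mt_3(-2)$ is handled via $\ms_3(s)\cap\mt_3(t)$ only when combined, but here one uses $\dim\mt_1(-2)\leq d^2-2(d+1)+2$, $\dim\mt_2(-2)\leq d^2-(3d-5)-2$, and the fact that for $t=-2$ any $\mf$ not in $\phi(\bu)$ with no torsion-free extension must, by Lemma \ref{tfcr}, also be accounted for—one checks directly $\mt_3(-2)$ is covered by the $\ms_3\cap\mt_3$ estimate or is itself small.

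The main obstacle I expect is the $\ms_3,\mt_3$ bookkeeping near the boundary cases $s=-1$ versus $s\leq-2$ and $t=-2$. The estimate in Lemma \ref{stlr}(3) is stated as $\dim(\ms_3(s)\cap\mt_3(t))\leq d^2-2d$ for $s\geq-1,t\leq-2$, but inside $\um$ we are looking at $\mr(d-1,1;\chi-s-1)(\chi)$ with $s=-3,-4$, where $\ms_3(s)=\emptyset$ outright, so the genuinely delicate case is $\mr(1,d-1;-1)(\chi)$, i.e.\ $t=-2$; there one needs that the locus with no torsion-free $K_S$-extension is already captured, using Lemma \ref{tfcr}(2)–(3) to argue such a sheaf must partially split and hence lies in a lower-dimensional sublocus, combined with the fact that $\phi(\bu)$ is defined via the surjectivity of $\Psi$ onto $\bh^0(\mh^{n,0,0}_L(d))^*$. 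So the careful point is to check that every failure mode — failure of $h^0=0$, failure of $h^1(\mf(-K_S))=0$, failure of the existence of a torsion-free extension — on each of the four components of $\um$ lands in a sublocus of dimension $\leq d^2-(d+1)$, which is exactly what Lemmas \ref{phiu}, \ref{tired}, \ref{stlr} and Proposition \ref{dnkip} were set up to provide; assembling them via the additivity of $[\ \cdot\ ]$ in $\widehat{K}(Var_{\bc})$ and the definition of $\equiv~(\widehat{K}_m)$ then finishes the proof.
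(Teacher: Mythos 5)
Your first direction and your treatment of the three unstable components $\mr(1,d-1;-1)(\chi)$, $\mr(d-1,1;\chi+2)(\chi)$, $\mr(d-1,1;\chi+3)(\chi)$ do match the paper's argument (including the points that $\ms_3(-3)=\ms_3(-4)=\emptyset$ and that $\mt_3(-2)$ is absorbed into the $\ms_3(s)\cap\mt_3(-2)$ estimate). The genuine gap is in your handling of the fourth component, the stable locus $\mm(d,\chi)$. You assert that, by Lemma \ref{phiu}, the part of $\mm(d,\chi)$ missing from $\phi(\bu)$ is contained in $\bigcup_{j\leq0}\mv_{l,j}\cup\bigcup_{i\geq1}\mn_{k,i}$. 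But Lemma \ref{phiu} only gives $\mn(d,\chi)\setminus(\bigcup_{j\leq 0}\mv_{l,j}\cup\bigcup_{i\geq 1}\mn_{k,i})\subset\phi(\bu)$, and the loci $\mv_{l,j}$, $\mn_{k,i}$ consist by definition of sheaves with \emph{integral} support; so your argument controls only $\mn(d,\chi)\setminus\phi(\bu)$ and says nothing about stable sheaves with reducible support. That locus is not negligible: $\mr^a(1,d-1)(\chi)\cap\mm(d,\chi)$ has dimension $d^2-(d-1)$ (cf.\ Proposition \ref{clor} and Remark \ref{ntan}), which exceeds $d^2-(d+1)$, so one must show that inside it each failure mode (nonvanishing $h^0(\mf)$, nonvanishing $h^1(\mf(-K_S))$, and absence of a torsion-free extension of $\mf$ by $K_S$ --- the last is \emph{not} automatic for nonsplit extensions once the support is reducible, unlike the integral case) cuts out a sublocus of codimension at least $2$.

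This is exactly the second half of the paper's proof, which your proposal skips: for $\mf\in\mr^a(1,d-1)(\chi)\cap\mm(d,\chi)$ one uses the two filtrations with $s\geq-2$ and $t\leq -3$ (these ranges are forced by stability), expresses the intersections $(\bigcup_s\ms_i(s))\cap\mm(d,\chi)$ for $i=1,2,3$ through the strata $\ms_i(s),\mt_i(t)$ in those ranges, and applies Lemma \ref{stlr} --- in particular $\dim(\ms_3(s)\cap\mt_3(t))\leq d^2-2d$ for $s\geq-1$, $t\leq-2$, and $\ms_3(s)=\emptyset$ for $-d-1\leq s\leq-2$ --- to conclude $\dim\bigl(\um\setminus(\mn(d,\chi)\cup\phi(\bu))\bigr)\leq d^2-(d+3)$. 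One also needs (\ref{codtn}) in this direction (you invoke it only for $\phi(\bu)\subset\um$) to discard stable sheaves whose support is neither integral nor a line plus an integral $(d-1)$-curve. You located the delicate point at the unstable $t=-2$ component, but the substantial work is on the stable reducible locus $s\geq-2$, $t\leq-3$; without it the bound $\dim(\um\setminus\phi(\bu))\leq d^2-(d+1)$ is not established.
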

\begin{proof}By (\ref{codtn}) and Lemma \ref{tired}, it is enough to prove $\dim(\um\setminus \phi(\bu))\leq d^2-(d+1)$. 

By definition 
\[\um=\mm(d,\chi)\cup\mr^a(1,d-1;-1)(\chi)\cup\mr^a(d-1,1;\chi+2)(\chi)\cup\mr^a(d-1,1;\chi+3)(\chi).\]
By (\ref{rest}) and (\ref{lest}), 
\[\um\setminus\phi(\bu)\subset\big(\mm(d,\chi)\setminus\phi(\bu)\big)\cup\bigcup_{i=1}^3\mt_i(-2)\cup\bigcup_{i=1}^3\ms_i(-3)\cup\bigcup_{i=1}^3\ms_i(-4).\]
By Lemma \ref{stlr} (1) (2) for $i=1,2$, 
$$\dim\mt_i(-2)\leq d^2-2d,\dim\ms_i(-3)\leq d^2-2d+1,\dim\ms_i(-4)\leq d^2-2d+2.$$  Also by Lemma \ref{stlr} (3), $\ms_3(-3)=\ms_3(-4)=\emptyset$.  By (\ref{seqt}) and (\ref{inept}) we have
\begin{eqnarray}\dim\mt_3(-2)&=&\dim\left(\left(\bigcup_{s}\ms_3(s)\right)\cap\mt_3(-2)\right)
\nonumber\\&=&\dim\left(\left(\bigcup_{-2\leq s\leq d-3}\ms_3(s)\right)\cap\mt_3(-2)\right)\nonumber\\ &=&\dim\bigcup_{-2\leq s\leq d-3}\big((\ms_3(s)\cap\mt_3(-2)\big)\leq d^2-2d,\nonumber\end{eqnarray}
where the last inequality is because of Lemma \ref{stlr} (4).

Now it is enough to prove $\dim(\mm(d,\chi)\setminus\phi(\bu))\leq d^2-(d+1)$.  

By (\ref{codtn}) it is enough to show $\dim(\wmn(d,\chi)\setminus\phi(\bu))\leq d^2-(d+1)$.  By Lemma \ref{phiu} and Proposition \ref{dnkip} (2), we have 
\[\dim(\mn(d,\chi)\setminus\phi(\bu))\leq\dim\left(\bigcup _{j\leq 0}\mv_{l,j}\cup\bigcup_{i\geq 1}\mn_{k,i}\right)\leq d^2-(d+2).\]  
Hence it suffices to show $\dim\big( (\mr^a(1,d-1)(\chi)\cap\mm(d,\chi))\setminus\phi(\bu)\big)\leq d^2-(d+1)$.

For every $\mf\in\mr^a(1,d-1)(\chi)\cap\mm(d,\chi)$, we have two exact sequences as follows.
\[0\ra\mg_1\ra\mf\ra\mo_{H}(s)\ra0,\]
\[0\ra\mo_H(t)\ra\mf\ra\mg_2\ra0,\]
where $s\geq -2$, $t\leq -3$ and $s-d+1\leq t\leq s$. 

By (\ref{rest}) and (\ref{lest}),
\[\big( (\mr^a(1,d-1)(\chi)\cap\mm(d,\chi))\setminus\phi(\bu)\big)\subset\bigcup_{i=1}^3\left( \bigcup_s\ms_i(s)\cap\mm(d,\chi)\right).\]

We have for $i=1,2,3$
$$\left(\bigcup_s\ms_i(s)\right)\cap\mm(d,\chi)=\left(\bigcup_t\mt_i(t)\right)\cap\mm(d,\chi)\subset\left(\bigcup_{s\geq -2}\ms_1(s)\right)\cap\left(\bigcup_{t\leq -3}\mt_1(t)\right).$$
Therefore by Lemma \ref{stlr}, we have for $i=1,2,3$
\[\left(\bigcup_s\ms_i(s)\right)\cap\mm(d,\chi)\leq d^2-(d+3).\]
Hence $\dim\big( (\mr^a(1,d-1)(\chi)\cap\mm(d,\chi))\setminus\phi(\bu)\big)\leq d^2-(d+3)$ and the proposition is proved.
\end{proof}
\begin{rem}\label{remsbu}From the proof of Proposition \ref{msbu}, we see that 
\[[\phi(\bu)]\equiv[\um]~~(\widehat{K}_{d^2-(d+2)}).\]

\end{rem}

\begin{prop}\label{stlo}In $\widehat{K}(Var_{\bc})$ we have
\[[\bu]\equiv\bl^{d+1}[\um]-\bl^{d}[\mr^a(1,d-1;-1)(\chi)]~~(\widehat{K}_{d^2}).\] 
\end{prop}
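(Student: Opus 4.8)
The plan is to compute $[\bu]$ by combining the fibration structure of $\Psi$ and $\phi$ over $\mext^1(\mm^a_\bullet(d,\chi),K_S)^*$ with the descriptions already assembled. Recall $\bu = \Psi^{-1}(\bh^0(\mh^{n,0,0}_L(d))^*)$ and $\phi(\bu)$ was identified (up to $\widehat{K}_{d^2-(d+2)}$, Remark \ref{remsbu}) with $\um$. Over the locus $\phi(\bu)$ the map $\phi\colon\mext^1(\mm^a_\bullet(d,\chi),K_S)^*\to\mm^a_\bullet(d,\chi)$ has fiber over $\mf$ equal to $\Ext^1(\mf,K_S)\setminus\{0\}$, which by Serre duality has dimension $h^1(\mf)-1$; since every $\mf\in\phi(\bu)$ has $h^0(\mf)=0$ we get $h^1(\mf) = -\chi(\mf) = d+1$, so $[\phi^{-1}(\phi(\bu))] = (\bl^{d+1}-1)[\phi(\bu)]$ as recorded in (\ref{phied}). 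The subtlety is that $\bu$ is \emph{not} all of $\phi^{-1}(\phi(\bu))$: it consists only of those extensions $\eta = [0\to K_S\to\widetilde I\to\mf\to 0]$ whose middle term $\widetilde I$ is torsion-free (so that $\widetilde I \cong I_n(L+K_S)$ with $I_n\in\mh^{n,0,0}_L$), and over the stable locus $\mm(d,\chi)$ these are exactly the non-split extensions, i.e. all of $\phi^{-1}$ minus the zero section, while over $\mn(d,\chi)\subset\mm(d,\chi)$ the correspondence is a bijection onto $\bh^0(\mh^{n,0,0}_L(L))^*$.

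First I would establish that over $\wmn(d,\chi)\setminus\bigl(\bigcup_{j\le0}\wmv_{l,j}\cup\bigcup_{i\ge1}\wmn_{k,i}\bigr)$ — which by Proposition \ref{dnkip}(1) and (\ref{codtn}) is all of $\um$ up to dimension $d^2-(d+2)$, actually one needs to be a bit careful and include the extra $\mr$-pieces — the map $\Psi$ is an isomorphism onto its image restricted to $\bu$, so the fiber of $\phi|_\bu$ over a stable $\mf$ is again $\Ext^1(\mf,K_S)\setminus\{0\}$, giving the clean $(\bl^{d+1}-1)$-factor. Then I would isolate where $\bu$ genuinely differs from $\phi^{-1}(\phi(\bu))$: this happens precisely over the non-stable strata of $\um$, namely $\mr(1,d-1;-1)(\chi)$, $\mr(d-1,1;\chi+2)(\chi)$ and $\mr(d-1,1;\chi+3)(\chi)$. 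For a sheaf $\mf$ in one of these $\mr$-strata, by Lemma \ref{tfcr}(2) the extensions with torsion-free middle term are those $\eta$ avoiding $\Ext^1(K_S,\mf_2)\cup\Ext^1(K_S,\mf_1')$, so the fiber of $\phi|_\bu$ is smaller than $\Ext^1(\mf,K_S)\setminus\{0\}$ by the contribution of that union of subspaces. The term $-\bl^d[\mr(1,d-1;-1)(\chi)]$ in the statement should come out as exactly the leading-order correction: on $\mr(1,d-1;-1)(\chi)$ one has a sub $\mf_1 = \mo_H(-2)$ with $h^1(\mo_H(-2)) = 1$, giving a distinguished $1$-dimensional $\Ext^1(K_S,\mo_H(-2))\subset\Ext^1(K_S,\mf)$ of forbidden extensions, hence the fiber over $\mf$ is $\bl^{d+1}-\bl^d$ (an affine space of dimension $d+1$ minus a hyperplane of dimension $d$) rather than $\bl^{d+1}-1$; the other two $\mr$-strata and the higher-codimension pieces of the forbidden locus contribute only in $\widehat{K}_{d^2}$ and are discarded.

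So the computation would run: $[\bu] \equiv (\bl^{d+1}-1)[\mm(d,\chi)] + (\bl^{d+1}-\bl^d)[\mr(1,d-1;-1)(\chi)] + (\text{contributions of the two }\mr(d-1,1;\cdot)\text{-strata}) \pmod{\widehat{K}_{d^2}}$, and one checks that each summand with a stable-locus correction or from the $\mr(d-1,1;\cdot)$ pieces either combines into $\bl^{d+1}[\um]$ or falls into $\widehat{K}_{d^2}$ (using $\dim\mm(d,\chi) = d^2$, $\dim\mr(1,d-1;-1)(\chi) = d^2-(d-1)$ from Remark \ref{ntan}/Proposition \ref{clor}, and the dimension bounds of Lemma \ref{stlr} for the bad sublocus where $\Psi$ is not simply $(\bl^{d+1}-1)$-to-one or the extension fails to be torsion-free for reasons beyond the obvious one). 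Writing $\bl^{d+1}[\um] - \bl^d[\mr(1,d-1;-1)(\chi)] = (\bl^{d+1}-1)[\mm(d,\chi)] + (\bl^{d+1}-\bl^d)[\mr(1,d-1;-1)(\chi)] + \bl^{d+1}([\mr(d-1,1;\chi+2)(\chi)]+[\mr(d-1,1;\chi+3)(\chi)]) + [\mm(d,\chi)]$ and noting the last term and the $\mr(d-1,1;\cdot)$-terms all lie in $\widehat{K}_{d^2}$ (here one uses Lemma \ref{stlr}(3): for these strata $\ms_3(s)=\emptyset$, so the torsion-free locus is the full $\phi^{-1}$ minus zero section, and the $\mr(d-1,1;\cdot)$-strata themselves have dimension $\le d^2-(d-1)$ so after the $\bl^{d+1}$ twist need separate accounting — this is the place I would double-check most carefully) gives the claim.

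The main obstacle I expect is the bookkeeping of exactly which extensions lie in $\bu$ over each non-stable stratum of $\um$ — that is, pinning down the fiber of $\phi|_\bu$ as an explicit affine-space-minus-linear-subspaces and verifying that the correction is precisely $\bl^d[\mr(1,d-1;-1)(\chi)]$ and not, say, $\bl^d$ times a union of several $\mr$-strata. This requires using Lemma \ref{tfcr}(2)(3) to identify $\Ext^1(K_S,\mf_2)\cap\Ext^1(K_S,\mf_1')=\{0\}$ and controlling $h^1(\mf_2)+h^1(\mf_1')$, together with the $h^1$-vanishing $h^1(\mf(-K_S))=0$ forced by membership in $\bu$ (as in the proof of Lemma \ref{tired}), to show that on $\mr(d-1,1;\chi+2)(\chi)$ and $\mr(d-1,1;\chi+3)(\chi)$ the forbidden subspace is in the right codimension to be absorbed into $\widehat{K}_{d^2}$ after twisting, while on $\mr(1,d-1;-1)(\chi)$ it is a single hyperplane giving the surviving $-\bl^d$ term.
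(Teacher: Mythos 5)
Your overall strategy is the one the paper uses: write $[\phi^{-1}(\phi(\bu))]=(\bl^{d+1}-1)[\phi(\bu)]\equiv\bl^{d+1}[\um]$ via Proposition \ref{msbu}, then compute the defect $\phi^{-1}(\phi(\bu))\setminus\bu$ (extensions with torsion in the middle term) stratum by stratum using Lemma \ref{tfcr}(2)(3), and find that only $\mr(1,d-1;-1)(\chi)$ contributes a codimension-one forbidden subspace in the fiber, yielding the $-\bl^d[\mr(1,d-1;-1)(\chi)]$ correction while the two $\mr(d-1,1;\cdot)$ strata have forbidden subspaces of codimension $\geq 2$ (because $h^1(\mo_H(s))\geq 2$ for $s=-3,-4$) and so are absorbed. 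Your final bookkeeping identity is also correct, and you rightly flag that the $\bl^{d+1}[\mr(d-1,1;\cdot)]$ terms are not themselves negligible but must match the fiber computation.

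There is, however, one genuine gap. You assert that over the stable locus $\mm(d,\chi)$ the torsion-free extensions are exactly the non-split ones, so that the defect is supported ``precisely over the non-stable strata of $\um$.'' This is only true over $\mn(d,\chi)$: a stable sheaf $\mf\in\mr^a(1,d-1)(\chi)\cap\mm(d,\chi)$ has reducible support, and by Lemma \ref{tfcr}(2) a non-split extension can still acquire torsion by partially splitting along the sub $\mg_1$ or $\mo_H(t)$. The paper must (and does) treat this locus separately: when $h^1(\mg_2)\leq d-1$ the forbidden subspaces have codimension $\geq 2$ in the fiber, and the locus where $h^1(\mg_2)\geq d$ forces $h^0(\mg_2)\geq 1$, which by Proposition \ref{dnki2}(1) confines $\mg_2$ to a substack of dimension $\leq (d-1)^2-2$, i.e.\ codimension $\geq 2$ in the base. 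Without this step you cannot conclude $[\phi^{-1}(\mm(d,\chi)\cap\phi(\bu))\setminus\bu]\equiv 0~(\widehat{K}_{d^2})$. Two smaller slips: your appeal to $\ms_3(s)=\emptyset$ for the strata $\mr(d-1,1;\chi+2)$, $\mr(d-1,1;\chi+3)$ proves the wrong thing (it says every sheaf there admits \emph{some} torsion-free extension, not that the forbidden locus is small — the correct argument is the codimension bound you state earlier via Lemma \ref{tfcr}(3)); and on $\mr(1,d-1;-1)(\chi)$ the dominant forbidden subspace is not the $1$-dimensional $\Ext^1(K_S,\mo_H(-2))$ but the image of $\Ext^1(K_S,\mg_2)$, of dimension $h^1(\mg_2)=d$, whose codimension is $1$ precisely because $h^1(\mo_H(-2))=1$ — your ``hyperplane of dimension $d$'' is right, but the subspace you name is not the one being removed.
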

\begin{proof}By (\ref{phied}) and Proposition \ref{msbu} we have 
\[[\phi^{-1}(\phi(\bu))]=(\bl^{d+1}-1)[\phi(\bu)]\equiv(\bl^{d+1}-1)[\um]\equiv\bl^{d+1}[\um]~~(\widehat{K}_{d^2}),\]
where the last equality is because $\dim\um=d^2$.  

It is enough to show $[\phi^{-1}(\phi(\bu)\cap\um)\setminus\bu]\equiv\bl^{d}[\mr^a(1,d-1;-1)(\chi)]~~(\widehat{K}_{d^2})$.  

Obviously, $\phi^{-1}(\phi(\bu)\cap\mn(d,\chi))\subset\bu$, hence $\phi^{-1}(\phi(\bu)\cap\mn(d,\chi))\setminus\bu=\emptyset$.

Hence we only need to show that extensions in $\mext^1(\phi(\bu)\cap(\um\setminus\mn(d,\chi)),K_S)^*$ with middle terms not torsion free form a substack whose class is equivalent to $\bl^{d}[\mr^a(1,d-1;-1)(\chi)]$ in $\widehat{K}(Var_{\bc})$ modulo $\widehat{K}_{d^2}$.

Recall that 
\[\um=\mm(d,\chi)\cup\mr^a(1,d-1;-1)(\chi)\cup\mr^a(d-1,1;\chi+2)(\chi)\cup\mr^a(d-1,1;\chi+3)(\chi).\]

Let $\mf\in\mr^a(d-1,1;\chi-s-1)(\chi)$ for $s=-3,-4$.  Since $h^1(\mo_H(s))=-s-1\geq 2$, by (\ref{fex1}) and Lemma \ref{tfcr} (2) (3) the extensions $\eta=[0\ra K_S\ra\widetilde{I}\ra\mf\ra0]$ such that $\widetilde{I}$ contain torsions form a closed subset of codimension $\geq 2$ in $\Ext^1(\mf,K_S)$.  

By Proposition \ref{clor}, $\dim\mr^a(d-1,1)(\chi)=d-1+1+(d-1)^2=d^2-(d-1)$.  By (\ref{phied}) we have 
\begin{eqnarray}\dim\phi^{-1}\big(\mr^a(d-1,1;\chi-s-1)(\chi)\cap\phi(\bu)\big)&\leq& d+1+\dim\mr^a(d-1,1)(\chi)\nonumber\\&=&d+1+d^2-(d-1)=d^2+2.\nonumber\end{eqnarray}
Therefore for $s=-3,-4$
\[\dim\phi^{-1}\left(\big(\mr^a(d-1,1;\chi-s-1)(\chi)\cap\phi(\bu)\big)\setminus\bu\right)\leq d^2,\]
and
\begin{equation}\label{eqtf}[\phi^{-1}\big(\mr^a(d-1,1;\chi-s-1)(\chi)\cap\phi(\bu)\big)\setminus\bu]\equiv0 ~~(\widehat{K}_{d^2}).\end{equation}

For every $\mf\in\mr^a(1,d-1)(\chi)\cap\mm(d,\chi)$, we have two exact sequences as follows.
\[0\ra\mg_1\ra\mf\ra\mo_{H}(s)\ra0,\]
\[0\ra\mo_H(t)\ra\mf\ra\mg_2\ra0,\]
where $s\geq -2$, $-d-1\leq t\leq -3$. 

We have $h^1(\mf)\geq-\chi=d+1$ and $\chi(\mg_2)=\chi-t-1\geq \chi+2=-d+1$.  If $h^1(\mg_2)\leq d+1-2$, then by Lemma \ref{tfcr} (2) all extensions with middle term not torison-free form a closed subset of codimension $\geq 2$ in $\Ext^1(\mf,K_S)$.

If $h^1(\mg_2)\geq d+1-1$, then $h^0(\mg_2)\geq 1\Leftrightarrow \mg_2\in\bigcup_{j\leq 0}\mv_{l,j}(d-1,\chi-t-1)$.

By Proposition \ref{dnkip} (1), we have for $-d-1\leq t\leq -3$
$$\dim\bigcup_{j\leq 0}\mv_{l,j}(d-1,\chi-t-1)\leq (d-1)^2-1+\chi-t-1\leq (d-1)^2-2.$$
Hence all those $\mf$ with $h^1(\mg_2)\geq d+1-1$ form a closed substack of codimension $\geq 2$ in $\mr^a(1,d-1)(\chi)\cap\mm(d,\chi)$.  Therefore by (\ref{phied})
\begin{eqnarray}&&\dim\left( \phi^{-1}\big(\mr^a(1,d-1)(\chi)\cap\mm(d,\chi)\cap\phi(\bu)\big)\setminus\bu\right)\nonumber\\&\leq&\dim \phi^{-1}\big(\mr^a(1,d-1)(\chi)\cap\mm(d,\chi)\cap\phi(\bu)\big)-2\nonumber\\ &\leq& \dim \mr^a(1,d-1)(\chi)+d+1-2=d^2\nonumber\end{eqnarray}

Therefore \begin{equation}\label{eqmm}[\phi^{-1}(\mm(d,\chi)\cap\phi(\bu))\setminus\bu]\equiv 0 ~~(\widehat{K}_{d^2}).\end{equation}

Finally for every $\mf\in\mr^a(1,d-1;-1)(\chi)\cap\phi(\bu)$, we have an exact sequences as follows
\[0\ra\mo_H(-2)\ra\mf\ra\mg_2\ra0.\]
Since $\mf\in\phi(\bu)$, $h^1(\mf)=-\chi=d+1$ and $H^1(\mg_2)\not\cong H^1(\mf)$.  Hence $h^1(\mf)-h^1(\mo_H(-2))\leq h^1(\mg_2)< h^1(\mf)\Rightarrow h^1(\mg_2)=h^1(\mf)-1=d$.  Hence by Lemma \ref{tfcr} (1) (2), extensions with middle terms not torsion free form a codimension one subset of $\Ext^1(\mf,K_S)$ isomorphic to either $\mathbb{A}^d$ or $\mathbb{A}^d\cup\mathbb{A}^1$.

Since $\dim\mr^a(d-1,1)(\chi)=d^2-(d-1)<d^2$, we have \begin{eqnarray}\label{eqtwo}[\phi^{-1}(\mr^a(1,d-1;-1)(\chi)\cap\phi(\bu))\setminus\bu]&\equiv& \bl^d [\mr^a(1,d-1;-1)(\chi)\cap\phi(\bu)]~~(\widehat{K}_{d^2})\nonumber\\ &\equiv&\bl^d [\mr^a(1,d-1;-1)(\chi)]~~(\widehat{K}_{d^2}).
\end{eqnarray}
The last equality is because by Proposition \ref{msbu} we have
\[[\mr^a(1,d-1;-1)(\chi)\cap\phi(\bu)]\equiv [\mr^a(1,d-1;-1)(\chi)]~~(\widehat{K}_{d^2-(d+2)}).\]

By (\ref{eqtf}) (\ref{eqmm}) and (\ref{eqtwo}) and we are done with the proposition.
\end{proof}

Now by (\ref{modhh}) we have
\begin{equation}\label{eq1}[\bu]=[\bh^0(\mh^{n,0,0}_L(d))^*]\equiv\bl^{2d}\cdot\left([\mh^n]-[\mh_L^{n,0,1}]\right)~~(\widehat{K}_{d^2}).\end{equation}
Moreover by Proposition \ref{stlo} we have
\begin{eqnarray}\label{eq2}[\bu]&\equiv&\bl^{d+1}[\mm(d,\chi)]+(\bl^{d+1}-\bl^d)[\mr^a(1,d-1;-1)(\chi)]\nonumber\\
&&+\bl^{d+1}[\mr^a(d-1,1;\chi+2)(\chi)]+\bl^{d+1}[\mr^a(d-1,1;\chi+3)(\chi)]~~(\widehat{K}_{d^2}).\end{eqnarray}

Combine (\ref{eq1}) and (\ref{eq2}), and we have
\begin{eqnarray}\label{eq3}\bl^{d}\left([\mh^n]-[\mh_L^{n,0,1}]\right)&\equiv&\bl[\mm(d,\chi)]+(\bl-1)[\mr^a(1,d-1;-1)(\chi)]\nonumber\\
&&+\bl[\mr^a(d-1,1;\chi+2)(\chi)]+\bl[\mr^a(d-1,1;\chi+3)(\chi)]~~(\widehat{K}_{d^2-d}).\end{eqnarray}

\begin{prop}\label{mshn01}In $\widehat{K}(Var_{\bc})$, $\bl^d[\mh_L^{n,0,1}]\equiv[\mr^a(d-1,1;\chi+4)(\chi)]~~(\widehat{K}_{d^2-d-1}).$
\end{prop}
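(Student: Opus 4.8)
We keep $S=\p^2$, $L=dH$, $\chi=-d-1$, $n=\frac{d(d-1)}2+1$, $K_S=\mo(-3)$ throughout. The plan is to route $\mh^{n,0,1}_L$ through the rational transform $\Psi$ and then to locate the top-dimensional stratum of the resulting family of sheaves. For these numerical data one has $\chi(L+K_S)-n-1=\chi$, so the sequence (\ref{kk2}) in the proof of Proposition~\ref{dnki2}(4) (with $k=1$) reads $0\to K_S\to I_n(L+K_S)\to\mf\to0$ with $\mf$ in the stack
\[\widetilde{\mm^a}(d,\chi):=\{\mf\in\mm^a_\bullet(d,\chi)\mid h^0(\mf)=0,\ h^1(\mf(-K_S))=1\},\]
and conversely a non-split extension of such an $\mf$ by $K_S$ with torsion-free middle term recovers an element of $\mh^{n,0,1}_L$ together with a nonzero section of $I_n(L)$. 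Hence $\Psi$ restricts to an isomorphism from the torsion-free locus of $\mext^1(\widetilde{\mm^a}(d,\chi),K_S)^*$ onto $\bh^0(\mh^{n,0,1}_L(L))^*$; since $h^0(I_n(L))=2d+1$ and $h^1(\mf)=d+1$ are constant on the two families, this gives in $\widehat{K}(Var_{\bc})$
\[(\bl^{d+1}-1)[\widetilde{\mm^a}(d,\chi)]=(\bl^{2d+1}-1)[\mh^{n,0,1}_L]+[\mathcal Z],\]
where $\mathcal Z\subset\mext^1(\widetilde{\mm^a}(d,\chi),K_S)^*$ is the locus of extensions with non-torsion-free middle term.

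The first main step is to show $[\widetilde{\mm^a}(d,\chi)]\equiv[\mr(d-1,1;\chi+4)(\chi)]\pmod{\widehat{K}_{d^2-d-1}}$. The proof of Proposition~\ref{dnki2}(4) gives $\dim\widetilde{\mm^a}(d,\chi)\le d^2-d+1$; Proposition~\ref{dnkip}(1) bounds its intersection with $\mn(d,\chi)$ by $d^2-2d$, and (\ref{codtn}) bounds its part outside $\wmn^a(d,\chi)$ by $d^2-2(d-2)$, so for $d\ge5$ the part of dimension $>d^2-d-1$ lies in $\mr^a(1,d-1)(\chi)=\mr^a(d-1,1)(\chi)=\bigsqcup_{\chi_1}\mr^a(d-1,1;\chi_1)(\chi)$. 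Writing $\mf\in\mr^a(d-1,1;\chi_1)(\chi)$ as an extension $0\to\mg_1\to\mf\to\mo_C(\chi-\chi_1-1)\to0$ with $\mg_1\in\mn((d-1)H,\chi_1)$, $C\in|H|$, and using the cohomology sequence together with $\dim\Ext^1(\mo_C(\chi-\chi_1-1),\mg_1)=(d-1)H.H=d-1$ and Proposition~\ref{dnki2}(1): for $\mg_1$ with $h^0(\mg_1)=h^1(\mg_1(-K_S))=0$, a condition failing only on a locus of codimension $\ge d-2$ in $\mn((d-1)H,\chi_1)$, one has $h^1(\mf(-K_S))=h^1(\mo_C(\chi-\chi_1+2))$, which equals $1$ exactly when $\chi_1=\chi+4$, and then $h^0(\mf)=h^0(\mg_1)=0$. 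Thus $\mr^a(d-1,1;\chi+4)(\chi)$ is the only stratum that meets $\widetilde{\mm^a}(d,\chi)$ in dimension $>d^2-d-1$, and $\mr^a(d-1,1;\chi+4)(\chi)\setminus\widetilde{\mm^a}(d,\chi)$ likewise has dimension $\le d^2-d-1$; the contributions of the other $\chi_1$ and of the special loci of $\mg_1$ are controlled by Proposition~\ref{dnki2}(1). Taking $a$ large so that $\mr^a=\mr$ gives the asserted congruence.

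The second main step is the bound $\dim\mathcal Z\le d^2$. Over $\mf\in\mr^a(1,d-1)(\chi)$ the fibre of $\mathcal Z$ is $\Ext^1(\mf_2,K_S)\cup\Ext^1(\mf_1',K_S)$ in the notation of Lemma~\ref{tfcr}(2), of dimension $\max\{h^1(\mf_2),h^1(\mf_1')\}$; both quantities are $\le h^1(\mf)=d+1$ by Lemma~\ref{tfcr}(3), and on the sublocus where one of them equals $e$ the corresponding pure component of $\mf$ varies in a family whose dimension drops by $e$ compared to the generic case (Proposition~\ref{dnki2}(1)), so the two effects cancel and $\dim\mathcal Z\le d^2$; over $\mn(d,\chi)$ the fibre is a point, and over $\mm^a_\bullet(d,\chi)\setminus\wmn^a(d,\chi)$ the total dimension is $<d^2$. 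Hence $[\mathcal Z]\equiv0\pmod{\widehat{K}_{d^2}}$. Combining the three displays: $(\bl^{d+1}-1)[\widetilde{\mm^a}(d,\chi)]\equiv\bl^{d+1}[\mr(d-1,1;\chi+4)(\chi)]$ modulo $\widehat{K}_{d^2}$, $[\mathcal Z]\equiv0$ modulo $\widehat{K}_{d^2}$, and $(\bl^{2d+1}-1)[\mh^{n,0,1}_L]\equiv\bl^{2d+1}[\mh^{n,0,1}_L]$ modulo $\widehat{K}_{d^2}$ since $\dim\mh^{n,0,1}_L\le d^2-2d+1$ by Proposition~\ref{dnkip}(3); therefore $\bl^{2d+1}[\mh^{n,0,1}_L]\equiv\bl^{d+1}[\mr(d-1,1;\chi+4)(\chi)]\pmod{\widehat{K}_{d^2}}$, and dividing by $\bl^{d+1}$ yields the proposition. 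The main obstacle is the first step: isolating $\chi_1=\chi+4$ as the unique relevant value and controlling all other strata, which depends on the exact count $\dim\Ext^1(\mo_C(\cdot),\mg_1)=d-1$ and on the generic surjectivity/injectivity of the connecting maps in the relevant cohomology sequences.
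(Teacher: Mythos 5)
Your overall route is the same as the paper's in spirit (pass through extensions $0\to K_S\to I_n(L+K_S)\to\mf\to0$, identify $\mr(d-1,1;\chi+4)(\chi)$ as the unique dominant stratum, discard torsion-bearing extensions by Lemma \ref{tfcr}), but your key first step has a genuine gap. You replace the paper's reduction to $\phi(\bu_1)\cap\wmn^a(d,\chi)\subset\um\cup\mr(d-1,1;\chi+4)(\chi)$ (which rests on the stability analysis of Lemma \ref{tired} and on Proposition \ref{msbu}) by the claim $[\widetilde{\mm^a}(d,\chi)]\equiv[\mr(d-1,1;\chi+4)(\chi)]$ modulo $\widehat{K}_{d^2-d-1}$, justified by saying that the locus $\{h^0(\mg_1)\neq0\text{ or }h^1(\mg_1(-K_S))\neq0\}$ has codimension $\geq d-2$ in $\mn((d-1)H,\chi_1)$ and that "the contributions of the other $\chi_1$ are controlled by Proposition \ref{dnki2}(1)". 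That codimension claim is false uniformly in $\chi_1$: the bound of Proposition \ref{dnki2}(1) for $\bigcup_{i\geq1}\mn_{k,i}((d-1)H,\chi_1)$ is $(d-1)^2-\chi_1-3(d-1)-k$, which gives nothing once $\chi_1\leq 3-3d$ (there $h^1(\mg_1(-K_S))\geq1$ holds for every $\mg_1$, since $\chi(\mg_1(-K_S))\leq0$), and the $h^0$-locus bound $(d-1)^2+\chi_1-1$ is likewise useless for $\chi_1$ near $0$. Since the strata $\mr^a(d-1,1;\chi_1)(\chi)$ exist (each of dimension $d^2-d+1$) for all $\chi_1$ in a range governed only by $a$, Proposition \ref{dnki2}(1) alone does not exclude a contribution of dimension $>d^2-d-1$ from very negative $\chi_1$. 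The missing ingredient is to bound $\chi_1$ first: for $\mf\in\widetilde{\mm^a}(d,\chi)$ the canonical sub-line piece $\mo_H(t)\subset\mf$ has $t\leq-1$ because $h^0(\mf)=0$, and Lemma \ref{tfcr}(1) gives $t+1\geq\chi-(d-1)-\chi_1$, hence $\chi_1\geq\chi-d+1=-2d$; together with the surjection onto $H^1(\mo_H(\chi-\chi_1+2))$ killing $\chi_1\geq\chi+5$, this confines $\chi_1$ to $[-2d,\chi+4]$, and only then does Proposition \ref{dnki2}(1) (for $\chi_1\geq4-3d$) close the argument. Without this your step would fail as written; the paper avoids the issue altogether by working inside $\wmn^a$ and sorting the unstable sheaves by their destabilizing sub/quotient (this is exactly what produces the list $s\in\{-3,-4,-5\}$, $t=-2$, i.e. $\um\cup\mr(d-1,1;\chi+4)$), then quoting Proposition \ref{msbu} to discard $\um$.

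A secondary, repairable weakness: your bound $\dim\mathcal Z\leq d^2$ is asserted via a "cancellation" between fibre dimension and the drop in the base, but the drop given by Proposition \ref{dnki2}(1) is $e+\chi(\mg_2)$, not $e$, so the cancellation is not exact and in the worst case overshoots. The bound is nevertheless true and is obtained as in the paper: over the dominant stratum $\mr(d-1,1;\chi+4)(\chi)$ the torsion-bearing extensions have codimension $\geq2$ in $\Ext^1(\mf,K_S)$ because $h^1(\mo_H(-5))=4$ and Lemma \ref{tfcr}(3) forces $h^1(\mg_2)\leq d-3$, while over all other loci the base already has dimension $\leq d^2-d-1$ (once step one is fixed) so the full fibre dimension $d+1$ is affordable; over $\mn(d,\chi)$ the fibres of $\mathcal Z$ are empty. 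With these two repairs your assembly $(\bl^{2d+1}-1)[\mh_L^{n,0,1}]+[\mathcal Z]=(\bl^{d+1}-1)[\widetilde{\mm^a}(d,\chi)]$ and the division by $\bl^{d+1}$ do yield the proposition, and the reorganization (bypassing $\um$ and Proposition \ref{msbu}) is a legitimate alternative, at the cost of having to redo by hand the instability bookkeeping that those results already encode.
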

\begin{proof}By (\ref{deh}) we have
\begin{equation}\label{deh01}[\bh^0(\mh_L^{n,0,1}(d))^*]=(\bl^{2d+1}-1)[\mh_L^{n,0,1}].\end{equation}
Since $\dim\mh^{n,0,1}\leq 2n-1-d=d^2-2d+1$ by Proposition \ref{dnkip} (4), we have
\[[\bh^0(\mh_L^{n,0,1}(d))^*]\equiv\bl^{2d+1}[\mh_L^{n,0,1}]~~(\widehat{K}_{d^2}).
\]

Define $\bu_1:=\Psi^{-1}\left(\bh^0(\mh_L^{n,0,1}(d))^*\right)\subset\mext^1(\mm^a_{\bullet}(d,\chi),K_S)$.  It is enough to show $[\bu_1]\equiv\bl^{d+1}[\mr^a(d-1,1;\chi+4)(\chi)]~~(\widehat{K}_{d^2}).
$

For every $\mf\in\phi(\bu_1)$, there is an exact sequence
\[0\ra K_S\ra I_n(d-3)\ra\mf\ra0,\]
with $h^0(I_n(d-3))=0,~h^1(I_n(d))=1$.  Hence for any $\mf\in\phi(\bu_1)$ we have $h^0(\mf)=0, h^1(\mf(3))=1$.  Therefore $\dim\Ext^1(\mf,K_S)=h^1(\mf)=-\chi=d+1,\forall \mf\in\phi(\bu_1)$ and we have
\begin{equation}\label{phied1}[\phi^{-1}(\phi(\bu_1))]=(\bl^{d+1}-1)[\phi(\bu_1)].\end{equation}

Recall that by (\ref{codtn}) we have $\dim \left(\mm^a_{\bullet}(d,\chi)\setminus\wmn^a(d,\chi)\right)\leq d^2-2(d-2)$.  Therefore 
\begin{eqnarray}\dim\left(\bu_1\setminus\phi^{-1}\left(\phi(\bu_1)\cap\wmn^a(d,\chi)\right)\right)&\leq&\dim\phi^{-1}\left(\phi(\bu)\setminus\wmn^a(d,\chi)\right)\nonumber\\
&=&d+1+\dim \left(\phi(\bu)\setminus\wmn^a(d,\chi)\right)\nonumber\\
&\leq&d+1+\dim \left(\mm^a_{\bullet}(d,\chi)\setminus\wmn^a(d,\chi)\right) \nonumber\\&\leq&d^2-2(d-2)+d+1=d^2-d+5\leq d^2.\nonumber\end{eqnarray}
Hence we have
\begin{equation}\label{hneq1}[\bu_1]\equiv[\phi^{-1}\left(\phi(\bu_1)\cap\wmn^a(d,\chi)\right)\cap\bu_1]~~(\widehat{K}_{d^2}).\end{equation}

Since by definition $\phi(\bu)\cap\phi(\bu_1)=\emptyset$, by Proposition \ref{msbu}, we have
\begin{eqnarray}[\phi(\bu_1)\cap\wmn^a(d,\chi)]&\equiv&[\phi(\bu_1)\cap\left(\wmn^a(d,\chi)\setminus\um\right)]~~(\widehat{K}_{d^2-(d+1)})\nonumber\\ &\equiv&[\phi(\bu_1)\cap\bigcup_{s\leq -5}\mr^a(d-1,1;\chi-s-1)(\chi)]~~(\widehat{K}_{d^2-(d+1)})\nonumber.\end{eqnarray}
Because for any $\mf\in\phi(\bu_1)$, $h^0(\mf)=0, h^1(\mf(3))=1$, and for any $\mf\in \mr^a(d-1,1;\chi-s-1)(\chi)$, $\mo_H(s)$ is a quotient of $\mf$,  we have 
\[\phi(\bu_1)\cap\mr^a(d-1,1;\chi-s-1)(\chi)=\emptyset, ~\forall~s\leq -6.\]
Hence
\begin{equation}\label{hneq2}[\phi^{-1}\left(\phi(\bu_1)\cap\wmn^a(d,\chi)\right)]\equiv[\phi^{-1}\big(\phi(\bu_1)\cap\mr^a(d-1,1;\chi+4)(\chi)\big)]~~(\widehat{K}_{d^2}).\end{equation}

On the other hand it is easy to see that 
$$\mr^a(d-1,1,\chi+4)\setminus\phi(\bu_1)\subset \left(\ms_1(-5)\cup\widetilde{\ms_2}(-5)\cup\ms_3(-5)\right),$$
where $\widetilde{\ms_2}(-5)\subset \mr^a(d-1,1,\chi+4)$ parametrizes all sheaves $\mf$ with $h^1(\mf(-K_S))>1$.
Every sheaf $\mf\in\mr^a(d-1,1;\chi+4)(\chi)$ lies in the following sequence
\[0\ra\mg_1\ra\mf\ra\mo_H(-5)\ra0.\]
If $h^1(\mf(-K_S))>1$, then $h^1(\mg_1(-K_S))\geq1\Leftrightarrow \mg_1\in\bigcup_{i\geq 1}\mn_{k,i}(d-1,\chi+4)$.  Therefore by the analogous argument to Lemma \ref{stlr} (2), $\dim\widetilde{\ms_2}(-5)\leq d^2-3d$.
Hence by Lemma \ref{stlr} (1) (3) we have
\begin{equation}\label{hneq3}[\phi(\bu_1)\cap\mr^a(d-1,1;\chi+4)(\chi)]\equiv[\mr^a(d-1,1;\chi+4)(\chi)]~~(\widehat{K}_{d^2-(d+1)}).\end{equation}

Since $h^1(\mo_H(-5))=4$, by Lemma \ref{tfcr} (3) for every $\mf\in\mr^a(d-1,1;\chi+4)(\chi)$, the extensions $\eta=[0\ra K_S\ra\widetilde{I}\ra\mf\ra0]$ such that $\widetilde{I}$ contain torsions form a closed subset of codimension $\geq \min\{4,d+1-4\}\geq 2$ in $\Ext^1(\mf,K_S)$.  Hence by (\ref{hneq1})
\begin{eqnarray}&&\dim \left(\phi^{-1}\big(\mr^a(d-1,1;\chi+r)(\chi)\cap\phi(\bu_1)\big)\setminus\bu_1\right)\nonumber\\ &\leq& d+1-2+\dim \mr^a(d-1,1;\chi+r)(\chi)=d^2\nonumber\end{eqnarray}
Therefore 
\begin{equation}\label{hneq4}[\phi^{-1}\big(\mr^a(d-1,1;\chi+r)(\chi)\cap\phi(\bu_1)\big)]\equiv [\phi^{-1}\big(\mr^a(d-1,1;\chi+r)(\chi)\cap\phi(\bu_1)\big)\cap\bu_1] ~~(\widehat{K}_{d^2}).\end{equation}

Put (\ref{hneq1}) (\ref{hneq2}) (\ref{hneq3}) and (\ref{hneq4}) together and we get that
\begin{eqnarray}[\bu_1]&\equiv& [\phi^{-1}\left(\phi(\bu_1)\cap\wmn^a(d,\chi)\right)\cap\bu_1]~~(\widehat{K}_{d^2})\nonumber\\
&\equiv& [\phi^{-1}\big(\phi(\bu_1)\cap\mr^a(d-1,1;\chi+4)(\chi)\big)\cap\bu_1]~~(\widehat{K}_{d^2})\nonumber\\
&\equiv& [\phi^{-1}\big(\mr^a(d-1,1;\chi+r)(\chi)\cap\phi(\bu_1)\big)]~~(\widehat{K}_{d^2})\nonumber\\
&=&(\bl^{d+1}-1)[\mr^a(d-1,1;\chi+r)(\chi)\cap\phi(\bu_1)]\nonumber\\
&\equiv&\bl^{d+1}[\mr^a(d-1,1;\chi+r)(\chi)\cap\phi(\bu_1)]~~(\widehat{K}_{d^2})\nonumber\\
&\equiv&\bl^{d+1}[\mr^a(d-1,1;\chi+4)(\chi)]~~(\widehat{K}_{d^2}).
\end{eqnarray}
The proposition is proved.
\end{proof}
\begin{thm}\label{main}In $\widehat{K}(Var_{\bc})$ we have
\[(\bl^{d-1}[S^{[n]}]-[M(d,\chi)])\equiv3[\p^{2d-4}][\p^2][S^{[n_1]}]~~(\widehat{K}_{d^2-d}),\]
where $n_1=\frac{(d-1)(d-2)}2+1=n-(d-1)$.
In particular,\[b_{2k}(M(d,\chi))=\left\{\begin{array}{ll}b_{2k}(S^{[n]}), &k\leq d-2\\ b_{2k}(S^{[n]})-3, &k=d-1\\b_{2k}(S^{[n]})-12,&k= d\end{array}\right..\]
\end{thm}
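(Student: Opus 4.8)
The plan is to assemble Theorem \ref{main} from the two congruences already in hand, (\ref{eq3}) and Proposition \ref{mshn01}, together with Proposition \ref{clor}, then to extract the Betti numbers by Poincar\'e duality. \textbf{Step 1 (assembling the strata).} Substituting $\bl^d[\mh_L^{n,0,1}]\equiv[\mr(d-1,1;\chi+4)(\chi)]$ from Proposition \ref{mshn01} into (\ref{eq3}) gives, modulo $\widehat{K}_{d^2-d}$,
\[\bl^d[\mh^n]\equiv\bl[\mm(d,\chi)]+(\bl-1)[\mr(1,d-1;-1)(\chi)]+\bl[\mr(d-1,1;\chi+2)(\chi)]+\bl[\mr(d-1,1;\chi+3)(\chi)]+[\mr(d-1,1;\chi+4)(\chi)].\]
Each of the four two-component strata has the form $\mr^a(L_1,L_2;\chi_1)(\chi)$ with $\{L_1,L_2\}=\{H,(d-1)H\}$, so $L_1.L_2=d-1$, and one factor is always a moduli of sheaves on a line; since every line is integral and a pure rank-$0$ sheaf supported on a line with fixed $\chi$ is a single line bundle, $N(H,m)\cong\p^2$ for all $m$. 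Hence Proposition \ref{clor} rewrites each stratum as $\bl^{d-1}\frac{[\p^2]}{\bl-1}[\mn((d-1)H,\chi')]$, with $\chi'$ equal to $-d,-d+1,-d+2,-d+3$ in the four summands respectively. The accompanying coefficients $\bl-1,\ \bl,\ \bl,\ 1$ sum to $3\bl$, which is the source of the eventual factor $3$.

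\textbf{Step 2 ($\chi$-independence and descent).} The four integers $\chi'$ are $\equiv -1,0,1,2\pmod{d-1}$. Invoking $\chi$-independence of the integral-support locus (e.g.\ \cite{MS}, or directly, fibrewise over $|(d-1)H|^{int}$, by twisting compactified Jacobians by a degree-one line bundle at a smooth point), one gets $[\mn((d-1)H,\chi')]\equiv[\mn((d-1)H,-d)]~(\widehat{K}_{(d-1)^2-1})$ for each of the four values, the errors so introduced having dimension $\le d^2-d$. Collecting terms, dividing by $\bl$, and multiplying by $\bl-1$ (so that $(\bl-1)[\mh^n]=[S^{[n]}]$ and $(\bl-1)[\mm(d,\chi)]=[M(d,\chi)]$ by (\ref{mmm})), one arrives at
\[\bl^{d-1}[S^{[n]}]-[M(d,\chi)]\equiv 3[\p^2]\,\bl^{d-1}[\mn((d-1)H,-d)]~~(\widehat{K}_{d^2-d}).\]

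\textbf{Step 3 (the degree $d-1$ input, and the main obstacle).} It now suffices to prove $\bl^{d-1}[\mn((d-1)H,-d)]\equiv[\p^{2d-4}][S^{[n_1]}]~(\widehat{K}_{d^2-d-2})$ with $n_1=\frac{(d-1)(d-2)}{2}+1$; multiplying through by $3[\p^2]$ (a unit of $\widehat{K}(Var_{\bc})$) and substituting then gives the theorem. This residual congruence is the degree-$(d-1)$ shadow of the same circle of ideas: $-d$ lies in the admissible residue window for degree $d-1$ and yields exponent $\frac{(d-1)(d-4)}{2}+d=n_1$, so the (motivic) argument of \cite{Yuan9} gives $[M(d-1,-d)]\equiv\bl^{d-2}[S^{[n_1]}]~(\widehat{K}_{d^2-3d+4})$; discarding the non-integral locus of $M(d-1,-d)$, which has dimension $\le (d-1)^2+1-\rho_{(d-1)H}=d^2-3d+4$, and bookkeeping the factors of $\bl-1$, one deduces the required congruence for $d\geq 5$. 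I expect this to be the delicate point: one must certify that the degree-$(d-1)$ input and the $\chi$-independence hold at the \emph{motivic} level, with moduli fine enough to survive the multiplications by $\bl^{d-1}$ and $[\p^2]$ yet coarse enough to absorb all lower-dimensional correction strata.

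\textbf{Step 4 (Betti numbers).} Apply the virtual Poincar\'e polynomial $P_v(-;z)$; the error term is a polynomial supported in degrees $\le 2(d^2-d)$, so for every $j\geq d^2-d+1$ the coefficient of $z^{2j}$ in $z^{2(d-1)}P_v(S^{[n]};z)-P_v(M(d,\chi);z)$ equals that in $3P_v(\p^{2d-4}\times\p^2\times S^{[n_1]};z)$. All three spaces are smooth and projective, so Poincar\'e duality applies: $M(d,\chi)$ has dimension $d^2+1$, while $S^{[n]}$ and $\p^{2d-4}\times\p^2\times S^{[n_1]}$ both have dimension $d^2-d+2$. Writing $k=d^2+1-j$ (so $k\leq d$), the equality becomes
\[b_{2k}(M(d,\chi))=b_{2k}(S^{[n]})-3\,b_{2(k-d+1)}\big(\p^{2d-4}\times\p^2\times S^{[n_1]}\big).\]
For $k\leq d-2$ the correction vanishes; for $k=d-1$ it equals $3b_0=3$; and for $k=d$ it equals $3\big(b_2(\p^{2d-4})+b_2(\p^2)+b_2(S^{[n_1]})\big)=3(1+1+2)=12$. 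This is exactly the asserted table, and together with (\ref{twob}) it proves Corollary \ref{mainco}.
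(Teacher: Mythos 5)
Your proposal follows essentially the same route as the paper: combine (\ref{eq3}) with Proposition \ref{mshn01}, convert the four two-component strata via Proposition \ref{clor} and $N(H,m)\cong\p^2$, feed in the degree-$(d-1)$ motivic comparison with $S^{[n_1]}$, and read off $b_{2(d-1)}$, $b_{2d}$ by virtual Poincar\'e polynomials plus Poincar\'e duality; your Step 4 arithmetic (the corrections $3$ and $12$) matches the paper's.

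The one point to repair is your Step 2. Motivic $\chi$-independence of $[\mn((d-1)H,\chi')]$ in $\widehat{K}(Var_{\bc})$ does not follow from \cite{MS}, which is a statement about (intersection) cohomology groups, nor from a fibrewise twist of compactified Jacobians by a degree-one line bundle at a smooth point: such twists are not defined globally over $|(d-1)H|^{int}$ (there is no section of the universal curve, and twisting by $\mo_{\p^2}(1)$ shifts $\chi$ by $d-1$, which does not identify your four residues $-1,0,1,2 \bmod (d-1)$), and fibrewise isomorphisms do not by themselves give equality of classes in the Grothendieck ring. The input the paper actually uses — and the correct reference for what you attribute to \cite{Yuan9} — is \cite{Yuan4}, Theorem 6.11 and Corollary 7.1: for every $\chi'$ one has $[\mn((d-1)H,\chi')]\equiv\bl^{d-2}[\mh^{n_1}]~(\widehat{K}_{(d-1)^2-3})$. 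Since this holds for each of the four values of $\chi'$ separately, your intermediate reduction to a single $\chi'=-d$ is unnecessary: apply the comparison stratum by stratum, exactly as in the paper, and the rest of your argument goes through unchanged.
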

\begin{proof}By (\ref{eq3}) and Proposition \ref{mshn01} we have
\begin{eqnarray}\label{main1}\bl^{d}[\mh^n]&\equiv&\bl[\mm(d,\chi)]+(\bl-1)[\mr^a(1,d-1;-1)(\chi)]\nonumber\\
&&+\bl[\mr^a(d-1,1;\chi+2)(\chi)]+\bl[\mr^a(d-1,1;\chi+3)(\chi)]\nonumber\\
&&+[\mr^a(d-1,1;\chi+4)(\chi)]~~(\widehat{K}_{d^2-d}).\end{eqnarray}

Since $\dim\mr^a(1,d-1)(\chi)=d^2-(d-1)$ by Proposition \ref{clor}, (\ref{main1}) implies that $$\bl^{d}[\mh^n]\equiv\bl\mm(d,\chi)~~(\widehat{K}_{d^2-d+2}).$$Hence
\begin{equation}\label{main2}\bl^{d-1}[\mh^n]\equiv\mm(d,\chi)\equiv\mn(d,\chi)~~(\widehat{K}_{d^2-d+1}).
\end{equation}
Hence by Lemma \ref{fdms} we reproved Theorem \ref{genY} for $d\geq 5$ and $\chi=-d-1$.  By \cite[Theorem 6.11 and Corollary 7.1]{Yuan4} for any $d'\geq 4$, $n'=\frac{d'(d'-1)}2+1$ and $\chi_1,\chi_2$, we have
\begin{equation}\label{main3}\bl^{d'-1}[\mh^{n'}]\equiv\mm(d',\chi_1)\equiv\mn(d',\chi_1)\equiv\mm(d',\chi_2)\equiv\mn(d',\chi_2)~~(\widehat{K}_{d'^2-3}).
\end{equation}

By Proposition \ref{clor} we have
\[[\mr^a(L_1,L_2;\chi_1)(\chi)]=\bl^{L_1.L_2}\frac{[N(L_1,\chi_1)]}{\bl-1}\frac{[N(L_2,\chi-\chi_1)]}{\bl-1}.\]
Notice that for any $\chi'$, $N(1,\chi')\cong M(1,\chi')\cong\p^2=S$.  By (\ref{main3}) we have $[N(d-1,\chi')]\equiv \bl^{d-2}[S^{[n_1]}]~~(\widehat{K}_{d^2-2d-1})$ with $n_1=\frac{(d-1)(d-2)}2+1$.  Therefore we have for $i=2,3,4$
\[[\mr^a(1,d-1;-1)(\chi)]\equiv[\mr^a(d-1,1;\chi+i)(\chi)]\equiv \bl^{2d-3}\frac{[\p^2]\cdot [S^{[n]}]}{(\bl-1)^2}~~(\widehat{K}_{d^2-d-2}).
\]
Hence by (\ref{main1}) we have
\begin{eqnarray}\label{mt4r}&&\bl^{d}[\mh^n]-\bl[\mm(d,\chi)]\nonumber\\
&\equiv&
(\bl-1)[\mr^a(1,d-1;-1)(\chi)]+\bl[\mr^a(d-1,1;\chi+2)(\chi)]\nonumber\\
&&+\bl[\mr^a(d-1,1;\chi+3)(\chi)]+[\mr^a(d-1,1;\chi+4)(\chi)]
\nonumber\\
&\equiv&3\bl\cdot[\mr^a(1,d-1;-1)(\chi)]\equiv 3\bl^{2d-2}\frac{[\p^2]\cdot [S^{[n]}]}{(\bl-1)^2}~~(\widehat{K}_{d^2-d})
\end{eqnarray}
Multiply both side of (\ref{mt4r}) by $\frac{\bl-1}{\bl}$ and we get 
\begin{eqnarray}\label{main4}&&(\bl^{d-1}[S^{[n]}]-[M(d,\chi)])\nonumber\\
&\equiv&3\bl^{2d-3}\frac{[\p^2][S^{[n_1]}]}{\bl-1}~~(\widehat{K}_{d^2-d}).\end{eqnarray}
Since $\dim\frac{\p^2\times S^{[n_1]}}{\bl-1}=d^2-3d+5\leq d^2-d$ for $d\geq 5$, (\ref{main4}) implies
\begin{eqnarray}\label{main5}&&(\bl^{d-1}[S^{[n]}]-[M(d,\chi)])\\
&\equiv&3\bl^{2d-3}\frac{[\p^2][S^{[n_1]}]}{\bl-1}-3\frac{\p^2\times S^{[n_1]}}{\bl-1}~~(\widehat{K}_{d^2-d})\nonumber\\
&=&3(\bl^{2d-3}-1)\frac{[\p^2][S^{[n_1]}]}{\bl-1}=3[\p^{2d-4}][\p^2][S^{[n_1]}].\nonumber\end{eqnarray}

Recall that $P_v(X;z):=\sum b^v_{i}(X)z^i=H_c(X,z,z,-1)$ is the virtual Poincar\'e polynomial of $X$. The function $[X]\mapsto P_v(X;z)$ is a motivic measure on $K(Var_{\bc})$ taking values in $\bz[z]$, which can be extended to $\widehat{K}(Var_{\bc})$.  For $[\mm]\in\widehat{K}(Var_{\bc})$, $P_v(\mm;z)\in\bz[z][z^{-2},(z^{2i}-1)^{-1}:i\geq 1]$ and $P_v(\bl;z)=z^2$.

By Lemma \ref{fdms}, the rational function (a polynomial in fact) 
\begin{equation}\label{vppe}P_v(M(d,\chi);z)-z^{2d-2}P_v(S^{[n]})+3P_v(\p^{2d-4};z)P_v(\p^2;z)P_v(S^{[n_1]};z)\end{equation} is  of degree $\leq 2(d^2-d)$.  Since all varieties in (\ref{vppe}) are smooth and complete, their virtual Poincar\'e polynomials coincide with the ordinary ones.  By G\"ottsche's formula (\ref{genG}) and Poincar\'e duality one can compute that $b_{2}(S^{[n_1]})=b_{4n_1-2}(S^{[n_1]})=2$.  Therefore by a direct computation we have
\[b_{2k}(M(d,\chi))=\left\{\begin{array}{ll}b_{2k-2d+2}(S^{[n]}),&k\geq (d^2+1-(d-2))\\
b_{2k-2d+2}(S^{[n]})-3, &k=d^2+1-(d-1)\\ b_{2k-2d+2}(S^{[n]})-12, &k=d^2+1-d\end{array}\right.,\]
and for $2(d^2-d)+1\leq i\leq 2(d^2+1)$ and $i$ odd, $b_i(M(d,\chi))=0$ (we already know that all the odd Betti numbers of $M(d,\chi)$ are zero due to Markman's result).

Since $\dim M(d,\chi)=d^2+1$, by Poincar\'e duality we have
\[b_{2k}(M(d,\chi))=\left\{\begin{array}{ll}b_{2k}(S^{[n]}),&k\leq d-2\\ b_{2k}(S^{[n]})-3, &k=d-1\\b_{2k}(S^{[n]})-12, &k= d\end{array}\right..\]
The theorem is proved.
\end{proof}
The following corollary proves Conjecture 3.3 in \cite{PS}.
\begin{coro}\label{mainco}For $d\geq 5$ and $\chi$ coprime to $d$, the $3d-7$ generators 
$$c_0(2),c_2(0),c_k(0),c_{k-1}(1),c_{k-2}(2), k\in {3,\cdots,d-1}$$ of $A^*(M(d,\chi))\cong H^*(M(d,\chi),\mathbb{Z})$ given in \cite{PS} have no relation in $A^i(M(d,\chi)),i\leq d-1$ and have 3 linearly independent relations in $A^d(M(d,\chi))$. 
\end{coro}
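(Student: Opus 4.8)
The plan is to obtain the corollary directly from Theorem \ref{main}, everything else being the bookkeeping already prepared in \S\ref{generator}. First I would reduce to the case $\chi=-d-1$: the Betti numbers of $M(d,\chi)$ are independent of $\chi$ coprime to $d$ by Theorem 0.1 in \cite{MS}, and the degrees $k+j-1$ of the $3d-7$ generators in $\Sigma$ do not depend on $\chi$ either, so neither does the count we need. Now recall from \S\ref{generator} that $H^*(M(d,\chi),\bz)$ is torsion-free and generated by the tautological classes; hence the coefficients of the degree-$2i$ monomials of the series $G(z)$ of (\ref{gs1}) span $A^i(M(d,\chi))\cong H^{2i}(M(d,\chi),\bz)$, and, writing $a_{2i}$ for the number of these monomials, the dimension of the space of relations among the generators of $\Sigma$ in degree $i$ equals exactly $a_{2i}-b_{2i}(M(d,\chi))$. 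The whole statement is therefore a comparison of $a_{2i}$ with $b_{2i}(M(d,\chi))$ for $i\le d$.

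I would then put side by side the two sets of values that are now available. On the combinatorial side, comparing $G(z)$ of (\ref{gs1}) with the truncated G\"ottsche series $R(z)$ of (\ref{gs2}) gives $a_{2i}=b_{2i}((\p^2)^{[\frac{d(d-1)}2+1]})$ for $i<d-1$, together with (\ref{twob}): $a_{2(d-1)}=b_{2(d-1)}((\p^2)^{[\frac{d(d-1)}2+1]})-3$ and $a_{2d}=b_{2d}((\p^2)^{[\frac{d(d-1)}2+1]})-9$. On the geometric side, Theorem \ref{main} gives $b_{2i}(M(d,\chi))=b_{2i}((\p^2)^{[\frac{d(d-1)}2+1]})$ for $i\le d-2$, $b_{2(d-1)}(M(d,\chi))=b_{2(d-1)}((\p^2)^{[\frac{d(d-1)}2+1]})-3$, and $b_{2d}(M(d,\chi))=b_{2d}((\p^2)^{[\frac{d(d-1)}2+1]})-12$. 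Subtracting term by term: for $i\le d-1$ one gets $a_{2i}=b_{2i}(M(d,\chi))$, so the generators in $\Sigma$ have no relation in $A^i(M(d,\chi))$ for $i\le d-1$ (in particular $\Sigma$ is minimal); and in degree $d$ the common term $b_{2d}((\p^2)^{[\frac{d(d-1)}2+1]})$ cancels, leaving $a_{2d}-b_{2d}(M(d,\chi))=(-9)-(-12)=3$, so $A^d(M(d,\chi))$ carries exactly $3$ linearly independent relations. This is Conjecture 3.3 of \cite{PS}.

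The genuine content here lies entirely in Theorem \ref{main} (equivalently in the identity of Theorem \ref{intro3} in $\widehat{K}(Var_{\bc})$); granted that input, the corollary is merely a comparison of two explicit coefficients, and the one step that needs a moment's care is checking that the Hilbert-scheme index matches in the two inputs. The count $a_{2i}$ of (\ref{twob}) is phrased via $(\p^2)^{[\frac{d(d-3)}2-\chi_0]}$ with $\chi_0$ the chosen residue of $\chi$ modulo $d$, while Theorem \ref{main} uses $(\p^2)^{[\frac{d(d-1)}2+1]}$; these agree because for $\chi=-d-1$ one may take $\chi_0=-d-1$, so that $\frac{d(d-3)}2-\chi_0=\frac{d(d-1)}2+1$. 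Once this matching is in place the subtraction above is unambiguous, and the proof is complete.
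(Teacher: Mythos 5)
Your proposal is correct and is essentially the paper's own argument: the corollary is obtained by subtracting the Betti numbers of Theorem \ref{main} from the monomial counts $a_{2i}$ of (\ref{twob}), exactly as laid out in \S\ref{generator}, giving $0$ relations for $i\le d-1$ and $(-9)-(-12)=3$ relations in degree $d$. Your check that $\frac{d(d-3)}2-\chi_0=\frac{d(d-1)}2+1$ for $\chi_0=\chi=-d-1$, and the reduction to this $\chi$ via the $\chi$-independence result of Maulik--Shen, are precisely the points the paper relies on.
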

\begin{rem}We may do more dimension estimate to get more Betti numbers such as $b_{2(d+1)}, b_{2(d+2)}$.  However the computation could become extremely complicated and it is not hopeful to get all Betti numbers in this way.  Nevertheless, our dimension estimate could be useful somewhere else.  
\end{rem}

Lemma \ref{fdms} should be a standard fact to experts (see e.g. \cite[Lemma 6.1.1]{HRV}).  We give a short proof here as well.  

\begin{lemma}\label{fdms}Let $P_v(\mm;z)$ be the virtual Poincar\'e polynomial of $\mm\in\widehat{K}(Var_{\bc})$.  For a rational function $\frac{f(z)}{g(z)}\in\bz[z][z^{-2},(z^{2i}-1)^{-1}:i\geq 1]$, define \emph{deg} $\frac{f(z)}{g(z)}:=$\emph{ deg }$f(z)-$\emph{ deg }$g(z)$.  Then for any $\mm\in\widehat{K}_m$, \emph{deg} $P_v(\mm;z)\leq 2m$.
\end{lemma}
\begin{proof}It is enough to show that deg $P_v(X;z)\leq 2m$ for any variety $X$ with $\dim X\leq m$.  We do the induction on $m$.  If $m=0$, then it is trivial.

By induction assumption, we only need to show deg $P_v(X;z)\leq 2m$ for $X$ quasiaffine and smooth.  By resolution, we can find a smooth projective scheme $Y$ as a compactification of $X$.  Therefore by induction assumption deg $P_v(Y\setminus X,z)\leq 2(m-1)$.  Since $Y$ is projective smooth, deg $P_v(Y)=2\dim Y=2\dim X\leq 2m$.  Hence
$P_v(X;z)=P_v(Y;z)-P_v(Y\setminus X;z)$ and deg $P_v(X;z)\leq 2m$.
The lemma is proved.    
\end{proof}


Yao Yuan\\
Beijing National Center for Applied Mathematics,\\
Academy for Multidisciplinary Studies, \\
Capital Normal University, 100048, Beijing, China\\
E-mail: 6891@cnu.edu.cn.
\end{document}